\theoremstyle{definition} % Define theorem styles here based on the definition style (used for definitions and examples)
\theoremstyle{plain} % Define theorem styles here based on the plain style (used for theorems, lemmas, propositions)
\theoremstyle{remark} % Define theorem styles here based on the remark style (used for remarks and notes)
\newcommand{\floor}[1]{{\left\lfloor{#1}\right\rfloor}}
\newcommand{\ceil}[1]{{\left\lceil{#1}\right\rceil}}
\newcommand{\disc}[0]{{\mathbb{D}^{2}}}
\newcommand{\plane}[0]{\mathbb{R}^2}
\newcommand{\rr}[0]{\mathbb{R}}
\newcommand{\zz}[0]{\mathbb{Z}}
\newcommand{\nn}[0]{\mathbb{N}}
\newcommand{\sbb}[0]{\mathbb{S}}
\newcommand{\Mod}[0]{{\rm Mod}}
\newcommand{\HH}{{\mathcal{H}}}
\newcommand{\balpha}{{\bm{\alpha}}}
\newcommand{\bbeta}{{\bm{\beta}}}
\newcommand{\bgamma}{{\bm{\gamma}}}
\newcommand{\ba}{{\bm{a}}}
\newcommand{\bb}{{\bm{b}}}
\newcommand{\bj}{{\bm{j}}}
\newcommand{\ii}{{\bm{i}}}
\newcommand{\h}{{\bm{h}}}
\newcommand{\hh}{{\mathrm{h}}}
\newcommand{\possim}{\stackrel{\mbox{\textnormal{\raisebox{0ex}[0ex][0ex]{$.$}}}}{\sim}}
\newcommand{\HHH}{{\mathrm{H}}}
\newcommand{\W}{{\mathscr{W}}}
\newcommand{\U}{{\mathscr{U}}}
\newcommand{\BB}{{\mathscr{B}}}
\newcommand{\BBB}{{\mathcal{B}}}
\newcommand{\RR}{{\mathcal{R}}}
\newcommand{\E}[0]{\mathbb{E}}
\newcommand{\A}[0]{\mathbb{A}}
\newcommand{\B}[0]{\mathbb{B}}
\newcommand{\T}[0]{\mathbb{T}}
\newcommand{\D}[0]{\mathbb{D}}
\newcommand{\C}[0]{\mathcal{C}}
\newcommand{\CC}[0]{\mathscr{C}}
\newcommand{\llb}{\llbracket}
\newcommand{\rrb}{\rrbracket}
\newcommand{\aset}[0]{\textsc{a}}
\newcommand{\bset}[0]{\textsc{b}}
\newcommand{\cset}[0]{\textsc{c}}
\newcommand{\Bd}[0]{\mathrm{B}}
\DeclareMathOperator{\ev}{{\mathcal{E}}}
\DeclareMathOperator{\Homeo}{Homeo}
\DeclareMathOperator{\Diff}{Diff}
\DeclareMathOperator{\Symp}{Symp}
\DeclareMathOperator{\Ham}{Ham}
\DeclareMathOperator{\Inv}{Inv}
\DeclareMathOperator{\rel}{\mathrm{rel}}
\DeclareMathOperator{\inter}{int}
\DeclareMathOperator{\id}{id}
\DeclareMathOperator{\Conf}{{\mathscr{D}}}
\DeclareMathAlphabet{\mathpzc}{OT1}{pzc}{m}{it}
\title{\normalfont\spacedallcaps{Symplectomorphisms and discrete braid invariants}} % The article title
\author{\spacedlowsmallcaps{Aleksander Czechowski* \& Robert Vandervorst**}} % The article author(s) - author affiliations need to be specified in the AUTHOR AFFILIATIONS block
\date{\small \today} % An optional date to appear under the author(s)
\begin{document}

\theoremstyle{definition}
\newtheorem{thm}{Theorem}[section]
\newtheorem{cor}[thm]{Corollary}
\newtheorem{lem}[thm]{Lemma}

\theoremstyle{definition}
\newtheorem{prop}[thm]{Proposition}

\theoremstyle{definition}
\newtheorem{rem}[thm]{Remark}

\theoremstyle{definition}
\newtheorem{defn}[thm]{Definition}

\theoremstyle{definition}
\newtheorem{exm}[thm]{Example}

%----------------------------------------------------------------------------------------
%	HEADERS
%----------------------------------------------------------------------------------------

\renewcommand{\sectionmark}[1]{\markright{\spacedlowsmallcaps{#1}}} % The header for all pages (oneside) or for even pages (twoside)
\lehead{\mbox{\llap{\small\thepage\kern1em\color{halfgray} \vline}\color{halfgray}\hspace{0.5em}\rightmark\hfil}} % The header style

\pagestyle{scrheadings} % Enable the headers specified in this block

%----------------------------------------------------------------------------------------
%	TABLE OF CONTENTS & LISTS OF FIGURES AND TABLES
%----------------------------------------------------------------------------------------

\maketitle % Print the title/author/date block

\setcounter{tocdepth}{2} % Set the depth of the table of contents to show sections and subsections only

\tableofcontents % Print the table of contents

%\listoffigures % Print the list of figures

%\listoftables % Print the list of tables

%----------------------------------------------------------------------------------------
%	ABSTRACT
%----------------------------------------------------------------------------------------

\section*{Abstract} % This section will not appear in the table of contents due to the star (\section*)

Area and orientation preserving diffeomorphisms of the standard 2-disc, referred to as symplectomorphisms of $\disc$, allow  decompositions in terms of \emph{positive} twist diffeomorphisms.
Using the latter decomposition we utilize the Conley index theory of discrete braid classes  as introduced in \cite{BraidConleyIndex,GVV-pre} in order to obtain a
Morse type forcing theory of periodic points: a priori information about periodic points determines a mapping class which may force additional periodic points.

%----------------------------------------------------------------------------------------
%	AUTHOR AFFILIATIONS
%----------------------------------------------------------------------------------------

{\let\thefootnote\relax\footnotetext{* \textit{Institute of Computer Science and Computational Mathematics, Jagiellonian University, Krak\'ow}}}

{\let\thefootnote\relax\footnotetext{** \textit{Department of Mathematics, VU University, Amsterdam}}}

%----------------------------------------------------------------------------------------

\newpage % Start the article content on the second page, remove this if you have a longer abstract that goes onto the second page

%%%%%%
% sect 1
%%%%%%%%

\begin{sloppypar}

\section{Prelude}
Let $\disc\subset\plane$ be the standard unit 2-disc with coordinates $z=(x,y)\in \plane$, and let $\omega=dx\wedge dy$ be the standard area 2-form on $\plane$.
A diffeomorphism $F\colon\disc \to\disc$ is said to be symplectic  if $F^*\omega = \omega$ --- area and orientation preserving ---
and is referred to as a \emph{symplectomorphism} of $\disc$.
Symplectomorphisms of the 2-disc form a group which is denoted by $\Symp(\disc)$. 
%By definition $\partial \disc$ is invariant under $F$, not point wise necessarily.
A diffeomorphism $F$ %of the 2-disc is 
is \emph{Hamiltonian} if it is given as the time-1 mapping of a Hamiltonian system
\begin{equation}\label{HE} 
    \begin{aligned}
      \dot{x}&=\partial_y H(t,x,y); \\ \dot{y}&=-\partial_xH(t,x,y),
    \end{aligned}
\end{equation}
where $ H \in C^{\infty}(\rr\times \disc)$ a the Hamiltonian function with the additional property that $H(t, \cdot)|_{\partial \disc} = const.$ for all $ t \in \rr$. The set of  Hamiltonians satisfying these requirements is denoted by $\HH(\disc)$ and
the associated flow of \eqref{HE} is denoted by $\psi_{t,H}$. The group $\Ham(\disc)$ signifies the group of Hamiltonian diffeomorphisms of $\disc$.
 Hamiltonian diffeomorphisms are symplectic by construction. For the 2-disc these notions are equivalent, i.e. $\Symp(\disc) = \Ham(\disc)$ and we may therefore   study Hamiltonian systems in order to prove properties about symplectomorphisms of $\disc$, cf.\ \cite{Boyland2005}, and  Appendix \ref{sec:sympMCG}.
%%
%\begin{prop}[See \cite{Boyland2005}, cf.\ App.\ \ref{sec:sympMCG}]\label{BoylandLemma}
%For any symplectomorphism  $F \in \Symp(\disc)$ there exists a Hamiltonian $H \in \HH(\disc)$
%such that 
%%the Hamiltonian isotopy $\psi_{t,H}$ given by the solution operator to the Hamilton equations \eqref{HamiltonEquations}
%$\psi_{1,H}=F$.
%%, and therefore $\Symp(\disc) = \Ham(\disc)$.
%\end{prop}
%%
%Proposition \ref{BoylandLemma} implies that $\Symp(\disc) = \Ham(\disc)$ and we may therefore   study Hamiltonian systems in order to prove properties about symplectomorphisms of $\disc$.

A subset $B\subset \disc$ is a invariant set for $F$ if $F(B) = B$.
 We are interested in finite invariant sets.
 Such invariant sets consist of periodic points, i.e.
points $z\in\disc$ such that $F^k(z) = z$ for some $k\ge 1$.
Since $\partial \disc$ is also invariant, periodic point are either in ${\rm int~}\disc$, or $\partial \disc$.

The main result of this paper concerns a
 \emph{forcing} problem. Given a finite invariant set $B\subset \inter \disc$ for $F \in \Symp(\disc)$, do there 
exist additional periodic points? More generally, does there exist a finite   invariant set $A\subset\inter \disc$, with $A\cap B=\varnothing$? This is much alike a similar question for the discrete dynamics on an interval, where 
the famous Sharkovskii theorem establishes a forcing order among periodic points based on their period.
In the 2-dimensional situation such an order is much harder to establish, cf.\ \cite{Boyland2005}.
The main results are based on braiding properties of periodic points and are stated and proved in Section \ref{sec:main1}.
The braid invariants introduced in this paper add additional information to existing invariants in the area-preserving case.
For instance,  Example \ref{exm:exist3} describes a braid class which forces additional invariant sets solely in the area-preserving case
hence extending the  non-symplectic methods described  in      \cite{JiangZheng}.
%
%Example \ref{exm:exist3} describes a braid class  which forces  additional invariant sets in the area-preserving case using the 
%braid invariant introduced in this
%paper  and which extends  existing invariants such as the invariants given in  \cite{JiangZheng}.

The theory in this paper can be further
genelarized to include symplectomorphisms of bounded subsets of $\rr^2$ with smooth boundary, eg. annuli, and  symplectomorphisms of $\rr^2$.
%with the appropriate asymptotic behavior.
%\begin{figure}[tb]
%\centering 
%\includegraphics[width=0.5\columnwidth]{GalleriaStampe} 
%\caption[An example of a floating figure]{An example of a floating figure (a reproduction from the \emph{Gallery of prints}, M.~Escher,\index{Escher, M.~C.} from \url{http://www.mcescher.com/}).} % The text in the square bracket is the caption for the list of figures while the text in the curly brackets is the figure caption
%\label{fig:gallery} 
%\end{figure}

%%%%%%
% sect 2
%%%%%%%%
\vskip.3cm

{\bf Acknowledgements:} AC was supported by the Foundation for Polish Science under the MPD Programme `Geometry
and Topology in Physical Models', co-financed by the EU European Regional Development Fund,
Operational Program Innovative Economy 2007-2013.

\section{Mapping class groups}
\label{sec:discinv}
A priori knowledge of finite invariant sets $B$ for $F$
categorize mappings in so-called mapping classes.
%which are characterized  via the Artin braid group. See  \cite{G3} for an overview. 
% 
%%%%%%%%%%%%%%%%%
%%%%%%%%%%%%%%%%%
%
%\subsection{Mapping classes}
%\label{subsec:MCG1}
Traditionally   mapping class groups are defined for orientation preserving homeomorphisms,
 cf.\  \cite{G3} for an overview. 
Denote by $\Homeo^+(\disc)$ the space  of orientation preserving homeomorphisms  and by $\Homeo^+_0(\disc)$ the homeomorphisms that leave the boundary point wise invariant.
Two homeomorphisms $F,G\in \Homeo^+(\disc)$ are   isotopic if there exists an isotopy $\phi_t$, with $\phi_t\in\Homeo^+(\disc)$ for all $t\in [0,1]$,
such that $\phi_0=F$ and $\phi_1 = G$.
The equivalence classes in $\pi_0\bigl(\Homeo^+(\disc)\bigr) = \Homeo^+(\disc)/\!\!\sim$ are called \emph{mapping classes} and  form a group under composition. The latter is referred to as the  \emph{mapping class group}  of the  2-disc and is denoted by $\Mod(\disc)$. 
For homeomorphisms that leave the boundary point wise invariant the mapping class  group is denoted by $\Mod_0(\disc) = \pi_0\bigl(\Homeo^+_0(\disc)\bigr)$. In  Appendix \ref{sec:MCGclbr} we provide proofs of the relevant facts about mapping class groups.
\begin{prop}
\label{prop:MCG11}
Both mapping class groups $\Mod(\disc)$ and $\Mod_0(\disc)$ are trivial.
\end{prop}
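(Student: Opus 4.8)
The plan is to reduce everything to the classical Alexander trick. I would first show that $\Mod_0(\disc)$ is trivial by exhibiting, for each $F \in \Homeo_0^+(\disc)$, an explicit isotopy to the identity through boundary-fixing homeomorphisms, and then deduce triviality of $\Mod(\disc)$ by straightening an arbitrary $F$ near the boundary so that it lands in $\Homeo_0^+(\disc)$, after which the first part applies.

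For the first step, identify $\disc$ with the unit disc in $\plane$ and define the coning isotopy
$$
\phi_t(z) =
\begin{cases}
t\, F(z/t), & 0 \le |z| \le t,\\
z, & t \le |z| \le 1,
\end{cases}
$$
for $t \in (0,1]$, together with $\phi_0 = \id$. Since $F|_{\partial\disc} = \id$, the two branches agree on $|z| = t$, so each $\phi_t$ is a well-defined homeomorphism of $\disc$ that fixes $\partial\disc$ pointwise and preserves orientation, inheriting these properties from $F$. At $t = 1$ we recover $\phi_1 = F$, so $\phi_t$ is the desired path in $\Homeo_0^+(\disc)$ joining $F$ to $\id$; hence $\Mod_0(\disc)$ is trivial.

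For $\Mod(\disc)$, the restriction $f = F|_{\partial\disc}$ is an orientation-preserving homeomorphism of the circle $\partial\disc$. Using that any such $f$ lifts to a monotone increasing homeomorphism of $\rr$, linear interpolation between this lift and the identity lift produces an isotopy in $\Homeo^+(\partial\disc)$ from $\id$ to $f$. I would extend this boundary isotopy into a collar neighborhood $\{1/2 \le |z| \le 1\}$ of $\partial\disc$, tapering to the identity on the inner circle and equal to the identity on $\{|z| \le 1/2\}$, obtaining a path $a_s$ in $\Homeo^+(\disc)$ with $a_0 = \id$ and $a_1|_{\partial\disc} = f$. Then $s \mapsto a_s^{-1} \circ F$ is an isotopy from $F$ to a homeomorphism $G := a_1^{-1} \circ F$ with $G|_{\partial\disc} = \id$. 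Since $G \in \Homeo_0^+(\disc)$, the first part supplies a further isotopy from $G$ to $\id$, and concatenation shows $F$ is isotopic to the identity in $\Homeo^+(\disc)$.

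The main obstacle is not conceptual but a matter of care: verifying the joint continuity of $(z,t) \mapsto \phi_t(z)$ at the cone point $t = 0$, which follows from the bound $|\phi_t(z)| \le t$ on the contracted inner disc together with $\phi_t(z) = z$ outside it, so that $\phi_t \to \id$ as $t \to 0$; and ensuring the collar extension in the second step genuinely stays within orientation-preserving homeomorphisms, so that the concatenated isotopy never leaves $\Homeo^+(\disc)$.
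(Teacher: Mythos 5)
Your proof is correct, and its first half coincides with the paper's: the paper also disposes of $\Mod_0(\disc)$ by the Alexander trick, citing it rather than writing out the coning isotopy as you do (your uniform bound $|\phi_t(z)-z|\le 2t$ is exactly what is needed for continuity at $t=0$, so that point is fine). For $\Mod(\disc)$, however, you take a genuinely different route. The paper feeds the Serre fibration $\Homeo^+(\disc)\to\Homeo^+(\partial\disc)$ with fiber $\Homeo^+_0(\disc)$ into the homotopy long exact sequence and uses $\pi_k\bigl(\Homeo^+_0(\disc)\bigr)\cong 1$, $\pi_0\bigl(\Homeo^+(\partial\disc)\bigr)\cong 1$ and $\pi_1\bigl(\Homeo^+(\partial\disc)\bigr)\cong\zz$ to conclude $\pi_0\bigl(\Homeo^+(\disc)\bigr)\cong 1$. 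You instead implement the relevant pieces by hand: the path-connectedness of $\Homeo^+(\partial\disc)$ via lifts and linear interpolation (which is precisely the input $\pi_0\bigl(\Homeo^+(\partial\disc)\bigr)\cong 1$; note each interpolated lift is strictly increasing and commutes with integer translation, so it does descend to a circle homeomorphism), and, in place of the fibration's path lifting, an explicit collar extension straightening $F$ to a boundary-fixing homeomorphism. Your version is more elementary and self-contained --- it avoids having to justify that restriction to the boundary is a Serre fibration, which the paper must cite --- at the cost of some point-set care in the collar step (the taper parameter in the annulus, and the continuity of $s\mapsto a_s^{-1}$, which holds because $\Homeo(\disc)$ with the compact-open topology is a topological group). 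What the fibration argument buys in exchange is more than $\pi_0$: the same exact sequence yields $\pi_1\bigl(\Homeo^+(\disc)\bigr)\cong\zz$, which the appendix reuses in the relative computation of $\Mod(\disc\rel B)\cong \BB_m/Z(\BB_m)$ from the fibration over the configuration space $\C_m\disc$; your method, as it stands, produces no such higher homotopy information.
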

%
%For a proof of Proposition \ref{prop:MCG11} see Appendix \ref{sec:MCGclbr}. 
The   mapping class groups $\Mod(\disc)$ and $\Mod_0(\disc)$ may also be defined using diffeomorphisms, cf.\ Appendix \ref{sec:MCGclbr}.  
In  Proposition \ref{prop:mapclass1}, we show that $\pi_0\bigl(\Symp(\disc)\bigr) = \Mod(\disc)$ and in Propositon
  \ref{prop:mapclass3} we show that $\Ham(\disc) = \Symp(\disc)$, which implies that every homeomorphism, or diffeomorphism is isotopic to
  a Hamiltonian symplectomorphism.

\begin{prop}
\label{prop:MCG11a}
$\pi_0\bigl(\Symp(\disc)\bigr) = \pi_0\bigl(\Ham(\disc)\bigr) =\Mod(\disc)\cong 1$.
\end{prop}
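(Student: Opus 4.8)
The plan is to obtain the chain of equalities from three ingredients: the triviality $\Mod(\disc)\cong 1$ established in Proposition~\ref{prop:MCG11}, the identification $\pi_0\bigl(\Symp(\disc)\bigr)=\Mod(\disc)$ of Proposition~\ref{prop:mapclass1}, and the equality $\Ham(\disc)=\Symp(\disc)$ of Proposition~\ref{prop:mapclass3}. Granting these, the assertion is formal, so I would concentrate on the two statements carrying the actual content.

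To see $\pi_0\bigl(\Symp(\disc)\bigr)\cong\Mod(\disc)$ I would prove that $\Symp(\disc)$ is path-connected; then $\pi_0\bigl(\Symp(\disc)\bigr)$ is trivial and the inclusion $\Symp(\disc)\hookrightarrow\Homeo^+(\disc)$ induces the unique isomorphism onto $\Mod(\disc)\cong 1$. Path-connectedness I would establish by a smoothing-plus-Moser argument. Given $F\in\Symp(\disc)$, use the path-connectedness of $\Diff^+(\disc)$ to choose a smooth path $F_t$ in $\Diff^+(\disc)$ with $F_0=\id$ and $F_1=F$. The forms $\omega_t:=F_t^*\omega$ are area forms on $\disc$ of the fixed total area $\int_\disc\omega$, with $\omega_0=\omega_1=\omega$ since $F_0,F_1$ are symplectic. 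Moser's theorem then supplies diffeomorphisms $g_t$ of $\disc$, with $g_0=g_1=\id$, satisfying $g_t^*\omega_t=\omega$; consequently $F_t\circ g_t$ is a path in $\Symp(\disc)$ joining $\id$ to $F$.

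For $\Ham(\disc)=\Symp(\disc)$ the inclusion $\Ham(\disc)\subseteq\Symp(\disc)$ is immediate, and $\Ham(\disc)$ is path-connected by rescaling the Hamiltonian via $H\mapsto sH$, $s\in[0,1]$. For the reverse inclusion I would differentiate the symplectic isotopy $F_t$ constructed above: the generating field $X_t=\dot F_t\circ F_t^{-1}$ satisfies $\mathcal{L}_{X_t}\omega=0$, whence $\iota_{X_t}\omega$ is closed, and since $H^1(\disc;\rr)=0$ it is exact, say $\iota_{X_t}\omega=dH_t$. Thus $F=\psi_{1,H}$ lies in $\Ham(\disc)$. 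The required boundary condition $H_t|_{\partial\disc}=\mathrm{const}$ follows because $X_t$ is tangent to the one-dimensional $\partial\disc$, so $dH_t=\iota_{X_t}\omega$ annihilates $T\partial\disc$ and $H_t$ is constant on the connected curve $\partial\disc$.

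The hard part will be the path-connectedness step: arranging the Moser corrections $g_t$ to depend smoothly on $t$ while keeping the normalizations $g_0=g_1=\id$, and ensuring that every construction respects $\partial\disc$ --- both the relative version of Moser's theorem on the disc with boundary and the tangency of $X_t$ to $\partial\disc$. Once $\Symp(\disc)$ is known to be connected, the rest collapses to $\pi_0\bigl(\Symp(\disc)\bigr)=\pi_0\bigl(\Ham(\disc)\bigr)=1=\Mod(\disc)$.
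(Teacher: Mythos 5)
Your proposal is correct and follows essentially the same route as the paper: the appendix proves Proposition~\ref{prop:mapclass1} by exactly your smoothing-plus-Moser argument (the boundary-respecting Moser correction $\chi_t$ is produced there by solving the Neumann problem $-\Delta \Phi_t = \tfrac{da_t}{dt}$, $\partial_{\bf n}\Phi_t|_{\partial\disc}=0$, which settles the smooth $t$-dependence and boundary-preservation issues you flag), and Proposition~\ref{prop:mapclass3} by your same tangency argument that $\iota_{X_t}\omega$ is exact on the contractible disc with $H(t,\cdot)$ constant on $\partial\disc$. The only cosmetic slip is that for time-dependent Hamiltonians the path-connectedness of $\Ham(\disc)$ uses the reparametrized family $H^s(t,z)=sH(st,z)$ rather than $sH$ itself, but this is standard and in any case redundant once $\Symp(\disc)=\Ham(\disc)$ is known and $\Symp(\disc)$ is connected.
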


More refined information about mapping classes is  obtained by considering finite invariant sets $B$.
This leads to the notion of the \emph{relative mapping classes}. 
Two homeomorphisms $F,G\in \Homeo^+(\disc)$ are of the same mapping class \emph{relative to} $B$ if there
exists an isotopy $\phi_t$,  with $\phi_t\in\Homeo^+(\disc)$ and $\phi_t(B) = B$ for all $t\in [0,1]$, such that
$\phi_0=F$ and $\phi_1=G$. The subgroup of such homeomorphisms is denoted by $\Homeo^+(\disc\rel B)$ and $\Homeo^+_0(\disc\rel B)$
in case $\partial\disc$ is point wise invariant.
The associated mapping class groups are denoted by 
$\Mod(\disc\rel B) = \pi_0\bigl(\Homeo^+(\disc\rel B) \bigr)$ and
$\Mod_0(\disc\rel B) = \pi_0\bigl(\Homeo^+_0(\disc\rel B) \bigr)$ respectively.

\begin{prop}
\label{prop:MCG12}
$\Mod(\disc\rel B)\cong \BB_m/Z(\BB_m)$ and $\Mod_0(\disc\rel B)\cong \BB_m$,
where $\BB_m$ is the Artin braid group, with $m = \# B$ and $Z(\BB_m)$ is the center of the braid group.
\end{prop}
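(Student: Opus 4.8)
The plan is to realize both mapping class groups as quotients of the fundamental group of a configuration space, using the evaluation map $F\mapsto F(B)$ together with the long exact sequence of a fibration. Write $\C_m$ for the space of unordered $m$-point subsets of $\inter\disc$, equipped with its natural topology, and take $B\in\C_m$ as basepoint. The Artin braid group is, by construction, $\BB_m\cong\pi_1(\C_m,B)$, the standard generator $\sigma_i$ being represented by a loop in which the $i$-th and $(i{+}1)$-st points exchange positions by a counterclockwise half-turn. I will use only this identification of $\pi_1(\C_m)$, not the full $K(\BB_m,1)$ statement.

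Next, consider the two evaluation maps $\ev\colon\Homeo^+_0(\disc)\to\C_m$ and $\ev\colon\Homeo^+(\disc)\to\C_m$, both sending $F$ to $F(B)$. Each is surjective, since any configuration can be reached from $B$ by dragging the points (keeping $\partial\disc$ fixed, in the first case). I would then verify that each $\ev$ is a locally trivial fibration; this is the technical heart of the argument and follows from the parametrized (topological) isotopy extension theorem, which supplies, locally over $\C_m$, a continuous family of ambient homeomorphisms realizing the motion of the configuration. The fiber of the first map over $B$ is exactly $\Homeo^+_0(\disc\rel B)$, and of the second exactly $\Homeo^+(\disc\rel B)$, so the two mapping class groups are the respective path components $\pi_0$ of these fibers.

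For the boundary-fixed case I would invoke Alexander's trick: $\Homeo^+_0(\disc)$ is contractible, so $\pi_1=\pi_0=0$ for the total space. The long exact homotopy sequence of $\Homeo^+_0(\disc\rel B)\to\Homeo^+_0(\disc)\xrightarrow{\ev}\C_m$ then collapses to an isomorphism $\BB_m\cong\pi_1(\C_m)\xrightarrow{\ \cong\ }\pi_0\bigl(\Homeo^+_0(\disc\rel B)\bigr)=\Mod_0(\disc\rel B)$, which is the second assertion.

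For the case in which the boundary may move, restriction to $\partial\disc$ exhibits $\Homeo^+(\disc)$ as the total space of a fibration over $\Homeo^+(S^1)$ with contractible fiber $\Homeo^+_0(\disc)$; since $\Homeo^+(S^1)\simeq SO(2)\simeq S^1$, the total space is homotopy equivalent to $S^1$, whence $\pi_0(\Homeo^+(\disc))=0$ and $\pi_1(\Homeo^+(\disc))\cong\zz$, generated by the loop of rotations $\theta\mapsto R_\theta$, $\theta\in[0,2\pi]$. The tail of the exact sequence reads $\zz\xrightarrow{\ev_*}\BB_m\to\Mod(\disc\rel B)\to 0$, so $\Mod(\disc\rel B)\cong\BB_m/\mathrm{im}(\ev_*)$. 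The final step is to identify $\mathrm{im}(\ev_*)$: the image of the generator is the braid swept out by $B$ under a full rigid rotation, namely the full twist $\Delta^2$, and the classical computation of the center gives $Z(\BB_m)=\langle\Delta^2\rangle$ for $m\ge 3$ (the cases $m\le 2$ being checked by hand). This yields $\Mod(\disc\rel B)\cong\BB_m/Z(\BB_m)$. I expect the two genuinely nontrivial points to be the fibration property of $\ev$ (the isotopy extension step) and the geometric identification of the rotation loop with the central full twist $\Delta^2$.
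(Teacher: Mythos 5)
Your argument is essentially the paper's own (Appendix \ref{subsec:braidMCG}): the same evaluation fibrations over $\C_m\disc$, the Alexander trick collapsing the long exact sequence to $\Mod_0(\disc\rel B)\cong\BB_m$, and in the free-boundary case the quotient of $\BB_m$ by the image of $\pi_1\bigl(\Homeo^+(\disc)\bigr)\cong\zz$, the only deviation being that you identify that image geometrically as the full twist $\square$ swept out by the rotation loop, where the paper instead cites Birman's Theorem 4.3 for $\ker d_* = Z(\BB_m)$. One caveat that your deferred ``hand check'' would surface: your derivation actually yields $\Mod(\disc\rel B)\cong \BB_m/\langle\square\rangle$, which coincides with $\BB_m/Z(\BB_m)$ only for $m\neq 2$, since $Z(\BB_2)=\BB_2\supsetneq\langle\sigma_1^2\rangle$ and the quotient is $\zz/2\zz$ there --- an edge case in the statement itself, present in the paper's version as well.
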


Let $\C_m\disc$ be the \emph{configuration space} of unordered configurations of $m$ points in
$\disc$. 
\emph{Geometric braids}  on $m$ strands on $\disc$ are closed loops  in $\C_m\disc$ based at $B_0 = \{z_1,\cdots,z_m\}$, where the points $z_i$ are defined
as follows: $z_i = (x_i,0)$, $x_0=-1$, and $x_{i+1}= x_i + 2/(m+1)$.
%\[
%z_i = (x_i,0),\quad  x_0=-1,\quad x_{i+1}= x_i + 2/(m+1).
%\]
The \emph{classical braid group} on $\disc$ is the fundamental group $\pi_1\bigl(\C_m\disc,B_0\bigr)$ and is denote by $\BBB_m\disc$.
The  (algebraic) \emph{Artin braid group} $\BB_{m}$ is a free group spanned by the $m-1$ generators $\sigma_{i}$, modulo
following relations:
\begin{align}\label{eqn:braidrel}
 \begin{cases}
  \sigma_{i} \sigma_{j} = \sigma_{j} \sigma_{i}, & \ |i-j| \geq 2,\ i,j \in \{1, \dots ,m-1\} \\
  \sigma_{i} \sigma_{i+1} \sigma_{i} = \sigma_{i+1} \sigma_{i} \sigma_{i+1}, & \ 1\le i \le m-2.
 \end{cases}
\end{align} 
Full twists are denoted algebraically by $\square=  (\sigma_{1} \dots \sigma_{m-1})^{m}$ and generate the center of the braid group $\BB_m$.
Presentation of words consisting only of the $\sigma_i$'s (not the inverses) and the relations in \eqref{eqn:braidrel} form a monoid
which is called the \emph{positive braid monoid} $\BB_m^+$.

There exists a canonical isomorphism $\ii_m\colon \BB_m \to \BBB_m\disc$, cf.\ \cite[Sect.\ 1.4]{Birman}.
%%% explicit %%%%%
%which is described as follows:
%the loop $\bsigma_i(t)\in \C_m\disc$, is given by $t\mapsto (z_1,\cdots, z^+_i(t),z^-_{i+1}(t),\cdots,z_m)$, where
%$z_i^+(t) = (x_i^+(t),\dot x_i^+(t))$, $x_i^+(t) = z_i+(2/(m+1)) t^2(3-2t)$ and
%$z_{i+1}^-(t) = (x_{i+1}^-(t),\dot x_{i+1}^-(t))$, $x_{i+1}^-(t) = z_{i+1}-(2/(m+1)) t^2(3-2t)$. Define $\bsigma_i^{-1}(t) :=
%\bsigma_i(1-t)$. The isomorphism $\ii_m$ is determined via $\sigma_i^\pm \mapsto [\bsigma_i^\pm(t)]$.
For closed loops $\bbeta(t)$ based at $B\in \C_m\disc$ we have a canonical isomorphism   $\bj_B\colon\pi_1\bigl(\C_m\disc,B\bigr)\to \pi_1\bigl(\C_m\disc,B_0\bigr) =\BBB_m\disc$. % defined as follows.
Let $p\colon [0,1]\to \C_m\disc$ be a path connecting $B_0$ to $B$, then define $\bj_B\bigl([\bbeta]_B\bigr) := [(p\cdot \bbeta)\cdot p^*]_{B_0}
= [p\cdot(\bbeta\cdot p^*)]_{B_0}$, where $p^*$ is the inverse path connecting $B$ to $B_0$. The definition of $\bj_B$ is independent of the chosen path $p$.
This yields the isomorphism 
$
\imath_B = \ii_m^{-1}\circ \bj_B\colon \pi_1\bigl(\C_m\disc,B\bigr) \to
\BB_m.
$
%defined such that $\ii_m\circ\imath_B$ is canonical isomorphism   $\pi_1\bigl(\C_m\disc,B\bigr)\to \BBB_m\disc$.

The construction of the isomorphism $\Mod_0(\disc\rel B) \cong \BBB_{m}\disc$ can be  understood as follows, cf.\ \cite{Birman}, \cite{Birman2}. For 
$F\in \Homeo^+_0(\disc\rel B)$
choose an   isotopy $\phi_t\in \Homeo^+_0(\disc), \ t \in [0,1]$, 
%(keeping the boundary point wise invariant in the case of $\Mods(\disc \rel B_{}, \partial \disc)$) 
such that $\phi_1=F$.
Such an isotopy  exists since $\Homeo^+_0(\disc)$ is contractible, cf.\ Propostion \ref{prop:MCG11}.
%Since $B= F(B) = \phi_1(B)$, the path $t\mapsto \phi_{t}(B)$ defines a  loop in $C_{m}(\disc)$ based at $B\in \C_m\disc$.  
% Based loops in $\C_m\disc$
%represent
% geometric braids on $m$ strands.
%Artin's braid group $\mathcal{B}_{m}$ is   the first homotopy group $\pi_{1}(C_{m}(\disc))$ with 
%a specified base point $\{z_1,\cdots,z_m\}$  that indicates where the braid starts, cf.\ \cite{Birman}.
For $G\in [F]\in \Mod_0(\disc\rel B)$, the composition and scaling  of the isotopies defines isotopic braids based at $B\in \C_m\disc$.
The isomorphism $\jmath_B\colon \Mod_0(\disc\rel B) \to \BB_m$  is given by $\jmath_B([F]) = \iota_B\bigl(d_*^{-1}([F])\bigr)= \imath_B([\bbeta]_B)$, with $\bbeta(t) = \phi_t(B)$
the geometric braid generated by $\phi_t$. The isomorphism $d_*$ is given in Appendix \ref{subsec:braidMCG} and $[\bbeta]_B$ denotes the homotopy class in $\pi_1\bigl(\C_m\disc,B\bigr)$.
For $\Mod(\disc\rel B)$ we use the same notation for the isomorphism which is given by
\[
\jmath_B\colon\Mod(\disc\rel B) \cong \BB_{m}/Z(\BB_m),\quad [F] \mapsto \jmath_B([F])= \beta\!\!\!\!\mod\square,
\]
where $\beta = \imath_B\bigl( [\bbeta]_B\bigr)$.
%For more details see Appendix \ref{sec:MCGclbr}.
%
The above mapping class groups can also be defined using diffeomorphisms and  symplectomorphisms. 

\begin{prop}
\label{prop:MCG12a} 
 $ \pi_0\bigl(\Ham(\disc\rel B)\bigr) =\Mod(\disc\rel B)\cong \BB_m/Z(\BB_m)$.
\end{prop}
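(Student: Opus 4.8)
The plan is to realize $\pi_0\bigl(\Ham(\disc\rel B)\bigr)$ through the same evaluation fibration that produces the homeomorphism result of Proposition~\ref{prop:MCG12}, and to compare the two via functoriality. Write $\ev\colon \Ham(\disc)\to \C_m\disc$ for the evaluation $\psi\mapsto \psi(B)$ at the base configuration $B$; its fibre over $B$ is exactly $\Ham(\disc\rel B)$. The inclusions $\Ham(\disc)\hookrightarrow \Homeo^+(\disc)$ and $\Ham(\disc\rel B)\hookrightarrow \Homeo^+(\disc\rel B)$ are compatible with evaluation, so we obtain a commuting ladder of fibrations over the common base $\C_m\disc$:
\[
\begin{CD}
\Ham(\disc\rel B) @>>> \Ham(\disc) @>\ev>> \C_m\disc\\
@VVV @VVV @|\\
\Homeo^+(\disc\rel B) @>>> \Homeo^+(\disc) @>\ev>> \C_m\disc.
\end{CD}
\]
I would then run the long exact homotopy sequences of the two rows in parallel.

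For the top row, $\C_m\disc$ is connected and $\pi_0\bigl(\Ham(\disc)\bigr)=1$ by Proposition~\ref{prop:MCG11a}, so the connecting map $\partial\colon \pi_1\bigl(\C_m\disc,B\bigr)\to \pi_0\bigl(\Ham(\disc\rel B)\bigr)$ is surjective with kernel the image of $\pi_1\bigl(\Ham(\disc)\bigr)$. Since $\Ham(\disc\rel B)$ is a topological group, $\pi_0$ carries a group structure and $\partial$ is a homomorphism; identifying $\pi_1(\C_m\disc,B)\cong \BB_m$ via $\imath_B$, this gives $\pi_0\bigl(\Ham(\disc\rel B)\bigr)\cong \BB_m/N_{\mathrm{s}}$, where $N_{\mathrm{s}}$ is the image of $\pi_1\bigl(\Ham(\disc)\bigr)$ in $\BB_m$. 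The identical argument for the bottom row recovers Proposition~\ref{prop:MCG12}, i.e.\ $\Mod(\disc\rel B)\cong \BB_m/N_{\mathrm{t}}$ with $N_{\mathrm{t}}=Z(\BB_m)=\langle\square\rangle$ the image of $\pi_1\bigl(\Homeo^+(\disc)\bigr)$. Commutativity of the ladder forces $N_{\mathrm{s}}\subseteq N_{\mathrm{t}}=\langle\square\rangle$, and the inclusion-induced map on $\pi_0$ is the quotient $\BB_m/N_{\mathrm{s}}\twoheadrightarrow \BB_m/\langle\square\rangle$.

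It remains to prove the reverse inclusion $\langle\square\rangle\subseteq N_{\mathrm{s}}$, which I would do by exhibiting the full twist as a Hamiltonian loop. The Hamiltonian $H=\tfrac12(x^2+y^2)$ lies in $\HH(\disc)$ since it is constant on $\partial\disc$, and its flow $\psi_{t,H}$ is rigid rotation, with $\psi_{2\pi,H}=\id$. Hence $t\mapsto \psi_{t,H}$, $t\in[0,2\pi]$, is a loop in $\Ham(\disc)$ based at the identity, and under $\ev$ it maps to the loop $t\mapsto \psi_{t,H}(B)$ that rotates the whole configuration once. This geometric braid is precisely the full twist $\square=(\sigma_1\cdots\sigma_{m-1})^m$, so $\square\in N_{\mathrm{s}}$. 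Therefore $N_{\mathrm{s}}=\langle\square\rangle=Z(\BB_m)$, the quotient map above is an isomorphism, and $\pi_0\bigl(\Ham(\disc\rel B)\bigr)=\Mod(\disc\rel B)\cong \BB_m/Z(\BB_m)$, as claimed.

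The main obstacle is the input that the top row is genuinely a (Serre) fibration in the Hamiltonian category with fibre $\Ham(\disc\rel B)$. Unlike the homeomorphism case, lifting a path in $\C_m\disc$ requires extending a prescribed smooth motion of the $m$ points of $B$ to a Hamiltonian isotopy of all of $\disc$ --- a symplectic isotopy-extension statement. I would establish this by interpolating a time-dependent vector field that agrees with the given motion near each point of $B$ and is Hamiltonian and tangent to $\partial\disc$, then invoking the parametrised version to obtain the homotopy lifting property; this, together with the connectivity $\pi_0\bigl(\Ham(\disc)\bigr)=1$ from Proposition~\ref{prop:MCG11a}, is what makes the long exact sequences available. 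The remaining steps are then formal.
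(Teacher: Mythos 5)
Your argument is correct, but it takes a genuinely different route from the paper. The paper never runs a long exact sequence in the Hamiltonian category: it computes $\Mod(\disc\rel B)\cong \BB_m/Z(\BB_m)$ once, for homeomorphisms (Proposition \ref{prop:MCG12}, via the fibration $\Homeo^+(\disc\rel B)\to\Homeo^+(\disc)\to\C_m\disc$ and Birman's identification $\ker d_* = Z(\BBB_m\disc)$, with the diffeomorphism case handled by Smale's theorem), and then \emph{transfers} this answer to the symplectic and Hamiltonian categories by Moser-type interpolation: Lemma \ref{prop:mapclass2} corrects a smooth isotopy rel $B$ between symplectomorphisms into a symplectic isotopy rel $B$ (solving a Neumann problem for a potential $\Phi_t$ and adding a compactly supported term $J\nabla\lambda_t$ to freeze $B$ pointwise), and Proposition \ref{prop:mapclass3} upgrades symplectic to Hamiltonian using $H^1(\disc)=0$ and the boundary condition. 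You instead compute $\pi_0\bigl(\Ham(\disc\rel B)\bigr)$ directly from the evaluation fibration $\Ham(\disc)\to\C_m\disc$, trap the relevant kernel between the Hamiltonian and homeomorphism fibrations by naturality, and realize the full twist explicitly by the rigid-rotation loop of $H=\tfrac12(x^2+y^2)$ --- with the one nontrivial input being exactly what you isolate, namely that $\ev$ has the homotopy lifting property; your cutoff-Hamiltonian local-section sketch is the standard and correct way to get this parametrized symplectic isotopy-extension statement. As for what each approach buys: the paper's route yields strictly more than the $\pi_0$ statement --- the group identity $\Symp(\disc\rel B)=\Ham(\disc\rel B)$ and the rel-$B$ Moser lemma are reused elsewhere (e.g.\ in building the chained Moser isotopies) --- whereas your route is more self-contained for this proposition; note, however, that you do not fully escape the Moser machinery, since your input $\pi_0\bigl(\Ham(\disc)\bigr)=1$ from Proposition \ref{prop:MCG11a} is itself proved by those interpolation arguments, and your quotient description $\BB_m/N_{\mathrm{s}}$ tacitly uses that the connecting map is a group homomorphism, which holds here because $\ev$ is the orbit map of a topological group action (the same structure the paper exploits for $d_*$), making $N_{\mathrm{s}}$ automatically normal. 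Two harmless cosmetic points: with the sign convention of \eqref{HE} your rotation loop traces $\square^{-1}$ rather than $\square$, which changes nothing since $N_{\mathrm{s}}$ is a subgroup; and the identification of the image of $\pi_1\bigl(\Homeo^+(\disc)\bigr)\cong\zz$ with $\langle\square\rangle$ is precisely the content of the paper's Appendix \ref{subsec:braidMCG}, which you correctly import.
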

In Appendix \ref{sec:sympMCG} we show that $\pi_0\bigl(\Symp(\disc\rel B)\bigr) = \Mod(\disc\rel B)$ and  that $\Symp(\disc\rel B) = \Ham(\disc\rel B)$
and therefore that every mapping class can be represented by Hamiltonian symplectomorphisms.

%%%%%%%%%%%%%%%%
%%%%%%%%%%%%%%%%%%

\section{Braid classes}
\label{subsec:2color}
Considering free loops in a configuration space as opposed to based loops leads to classes of closed braids, which are the key tool for studying periodic points.

\subsection{Discretized braids}
\label{subsec:discbr}
From \cite{BraidConleyIndex} we recall the notion of positive piecewise linear braid diagrams and discretized braids.
\begin{defn}
\label{PL}
The space of {\em discretized period $d$ closed braids on $n$ strands},
denoted $\Conf^d_m$, is the space of all pairs $(\bb,\tau)$ where
$\tau\in S_m$ is a permutation on $m$ elements, and $\bb$ is an 
unordered set of $m$ {\em strands}, $\bb=\{\bb^\mu\}_{\mu=1}^m$,
defined as follows:
%satisfying the following conditions:
\begin{enumerate}
\item[(a)]
	each strand  
	$\bb^\mu=(x^\mu_0,x^\mu_1,\ldots,x^\mu_d)\in\rr^{d+1}$
	consists   of $d+1$ {\em anchor points} $x_j^\mu$;
\item[(b)] $x^\mu_d = x^{\tau(\mu)}_0$
	for all $\mu=1,\ldots,m$;
%\[	x^\mu_d = x^{\tau(\mu)}_0 .	\]
\item[(c)]
	for any pair of distinct strands $\bb^\mu$ and $\bb^{\mu'}$
	such that $x^\mu_j=x^{\mu'}_j$ for some $j$,
	the \emph{ transversality} condition
	 $\bigl(x^\mu_{j-1}-x^{\mu'}_{j-1}\bigr)
	\bigl(x^\mu_{j+1}-x^{\mu'}_{j+1}\bigr) < 0$ holds.
%\begin{equation*}
%%\label{eq_transverse}
%        \bigl(x^\mu_{j-1}-x^{\mu'}_{j-1}\bigr)
%	\bigl(x^\mu_{j+1}-x^{\mu'}_{j+1}\bigr) < 0,
%	\end{equation*}
%	which is referred to as \emph{ transversality} of strands.
\end{enumerate}
%The topology on $\Conf_m^d$ is 
%the standard topology of $\rr^{md}$ on the strands and
%the discrete topology with respect to the permutation $\tau$, 
%modulo permutations which change orderings of strands. 
%
%Specifically, two discretized braids $(\bb,\tau)$ and 
%$(\tilde\bb,\tilde\tau)$ are close iff for some permutation 
%$\sigma\in S_n$ one has $\bb^{\sigma(\mu)}$ close to 
%$\tilde\bb^\mu$ (as points in $\rr^{n+1}$) for all $\mu$,
%with $\sigma \circ\tilde\tau=\tau\circ\sigma$. 
\end{defn}

\begin{rem}
Two discrete braids $(\bb,\tau)$ and $( \bb', \tau')$ are close if the strands $\bb^{\zeta(\mu)}$ and $ \bb'^\mu$
are close in $\rr^{md}$ for some permutation $\zeta$ such that $ \tau'=\zeta\tau\zeta^{-1}$.
We suppress the permutation $\tau$ from the notation. 
Presentations via the braid monoid $\BB_m^+$ store the permutations.
\end{rem}

\begin{defn}[cf.\ \cite{BraidConleyIndex}]
\label{defn:closure}
The closure $\bar\Conf_m^d$ of the space $\Conf_m^d$ consists of pairs $(\bb,\tau)$ for which (a)-(b) in Definition \ref{PL} are satisfied.
\end{defn}

The path components of $\Conf_m^d$ are the \emph{discretized braids classes} $[\bb]$.
Being in the same path connected component is an equivalence relation on $\Conf_m^d$, where the braid classes are the
equivalence classes expressed by the notation $\bb,\bb'\in [\bb]$, and $\bb\sim\bb'$. 
The associated permutations $\tau$ and $\tau'$ are conjugate. A path connecting $\bb$ and $\bb'$ is called a \emph{positive isotopy} and the equivalence relation is   referred to \emph{positively isotopic}.

To a configuration $\bb\in\Conf_m^d$  one can associate a 
\emph{piecewise linear braid diagram} $\Bd(\bb)$. For 
each strand $\bb^\mu\in \bb$, consider the piecewise-linear (PL) 
interpolation
\begin{equation}\label{interpolate1}
\Bd^{\mu}(t) := x^\mu_{\floor{d\cdot t}}+(d\cdot t-\floor{d\cdot t})
	(x^\mu_{\ceil{d\cdot t}}-x^\mu_{\floor{d\cdot t}}),
\end{equation}
for $t\in[0,1]$.
The braid diagram $\Bd(\bb)$ is then defined to be the 
superimposed graphs of all the functions $\Bd^{\mu}(t)$.
A braid diagram $\Bd(\bb)$ is not only a good bookkeeping tool for keeping track of the strands
in $\Bd(\bb)$, but also plays natural the role of a braid diagram projection with only positive intersections, cf.\ Section \ref{subsec:discbrinv12}.

The set of $t$-coordinates of intersection points in $\Bd(\bb)$ is denoted by $\{t_i\}$, $i=1,\cdots,|\bb|$, where $|\bb|$ is the total number of
intersections in $\Bd(\bb)$ counted with multiplicity. The latter is also referred to as the \emph{word metric} and is an invariant for $\bb$.
A discrete braid $\bb$ is \emph{regular} if all points $t_i$ and anchor points $x_j^\mu$ are distinct.
The regular discrete braids in $[\bb]$ form a dense subset and every discrete braid is positively isotopic to a regular discrete braid. 
To a regular discrete braid $\bb$ one can assign a unique positive word $\beta = \beta(\bb)$ defined as follows:
\begin{equation}
\label{eqn:word1}
\bb \mapsto \beta(\bb) = \sigma_{k_1} \cdots \sigma_{k_\ell},
\end{equation}
 where $k_i$ and $k_i +1$ are the positions that intersect at $t_i$, cf.\ \cite[Def.\ 1.13]{Dehornoy1}. 
On the positive braid monoid $\BB_m^+$ two positive words $\beta$ and $\beta'$ are positively equal,
notation $\beta \doteq\beta'$, if they represent the same element in $\BB_m^+$ using  the relations in \eqref{eqn:braidrel}.
On $\BB_m^+$ we define an equivalence relation which acts as an analogue of conjugacy in the braid group, cf.\ \cite[Sect.\ 2.2]{BDV}.
For a given word $\sigma_{i_1}\cdots\sigma_{i_n}$, define the relation
\[
\sigma_{i_1}\sigma_{i_2}\cdots\sigma_{i_n} \equiv \sigma_{i_2}\cdots\sigma_{i_n}\sigma_{i_1}.
\]
%$\Sft(\sigma_{i_1}\cdots\sigma_{i_n}) := \sigma_{i_2}\cdots\sigma_{i_n}\sigma_{i_1}$ and
%$\Sft^{-1}(\sigma_{i_1}\cdots\sigma_{i_n}):= \sigma_{i_n}\sigma_{i_1}\cdots\sigma_{i_{n-1}}$, called the cyclic shift operations.

\begin{defn}
\label{defn:equiv12}
Two positive words $\beta,\beta'\in \BB_m^+$ are \emph{positively conjugate}, notation
$\beta \possim \beta'$, if there exists a sequence of words $\beta_0,\cdots,\beta_\ell\in \BB_m^+$, with $\beta_0=\beta$ and $\beta_\ell =\beta'$, such
that for all $k$, either  $\beta_k\doteq \beta_{k+1}$, or $\beta_k\equiv \beta_{k+1}$ % $\Sft(\beta_k)=\beta_{k+1}$, or $\Sft^{-1}(\beta_k)
%= \beta_{k+1}$.
\end{defn}

Positive conjugacy is an equivalence relation on $\BB_m^+$ and the set of  positive conjugacy classes $\llb\beta\rrb$ of the braid monoid $\BB_m^+$ is  denoted by
$\CC \BB_m^+$. 

The above defined assignment $\bb \mapsto \beta(\bb)$ can be extended to all discrete braids. A discrete braid $\bb$ is positively isotopic to a regular braid $\bb'$ and the mapping $\Conf_m^d \to \CC\BB_m^+$, given by $\bb \mapsto \llb\beta(\bb)\rrb$, is well-defined
by choosing $\beta(\bb)$ to be any representative in the positive conjugacy class $\llb\beta(\bb')\rrb$.
Observe that for fixed $d$ the mapping $\Conf_m^d \to \CC\BB_m^+$ is not surjective.

\begin{rem}
The positive conjugacy relation defined in Definition \ref{defn:equiv12} is symmetric by construction since it is defined on finite words.
For instance, consider $\sigma_1\sigma_2\sigma_3 \equiv \sigma_2\sigma_3\sigma_1$.
The question whether the $\sigma_2\sigma_3\sigma_1 \equiv \sigma_1\sigma_2\sigma_3$ is answered as follows:
$\sigma_2\sigma_3\sigma_1 \equiv \sigma_3\sigma_1\sigma_2 \equiv \sigma_1\sigma_2\sigma_3$, which, by Definition \ref{defn:equiv12},
shows that $\sigma_2\sigma_3\sigma_1 \equiv \sigma_1\sigma_2\sigma_3$.
%Note $\Sft$ is an operation on words. 
%If $\beta \doteq \beta'$, then $\Sft(\beta) \possim \Sft(\beta')$, but $\Sft(\beta)$ and $\Sft(\beta')$
%are \emph{not} necessarily positively equal. 
%For example $\sigma_1\sigma_2\sigma_1 \doteq \sigma_2\sigma_1\sigma_2$, and
%$\Sft(\sigma_1\sigma_2\sigma_1) = \sigma_2\sigma_1^2$ and $\Sft(\sigma_2\sigma_1\sigma_2) = \sigma_1\sigma_2^2$.
%Clearly, $\sigma_2\sigma_1^2\possim \sigma_1\sigma_2^2$, but these words are not positively equal.
\end{rem}

The presentation of discrete braids via words in $\BB_m^+$ yields the 
 following alternative equivalence relation.
\begin{defn}
\label{defn:topeq}
Two discretized braids $\bb, \bb'\in \Conf_m^d$ are \emph{topologically equivalent} if $\beta(\bb) \possim \beta(\bb')$ in $\BB_m^+$, i.e. 
$\beta(\bb)$ and $\beta(\bb')$ are positively conjugate.
Notation: $\bb \possim \bb'$.
%Two discretized braids $\bb, \bb'\in \Conf_m^d$ are \emph{topologically equivalent} if $\E^q\bb \sim \E^q\bb'$ in $\Conf_m^{d+q}$, for some $q\ge 0$.
%Notation: $\bb \possim \bb'$.
\end{defn}

Summarizing, $\bb\sim\bb'$ implies $\bb \possim \bb'$
and $\possim$ defines a coarser equivalence relation on $\Conf_m^d$.
The equivalence classes with respect to $\possim$ are denote by $[\bb]_{\possim}$.
The converse   is not true in general, cf.\ \cite[Fig.\ 8]{BraidConleyIndex}.
Following \cite[Def.\ 17]{BraidConleyIndex}, a discretized braid class $[\bb]$ is \emph{free} if
%
%\[
%\bb \sim \bb'  \quad\hbox{if and only if  }\quad \bb \possim\bb',
%\]
$[\bb] = [\bb]_{\possim}$.
%The \emph{word metric} $|\bb|$ of the discretized   braid is the number of intersections in the graph $\bb(t)$, $t\in [0,1]$.
%The word metric is  an invariant of discretized braid classes.
%
\begin{prop}[\cite{BraidConleyIndex}, Prop.\ 27]
\label{prop:free}
If $d>|\bb|$, then $[\bb]$ is a free braid class.
\end{prop}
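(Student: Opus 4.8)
The plan is to establish the nontrivial inclusion $[\bb]_{\possim}\subseteq[\bb]$; the reverse inclusion $[\bb]\subseteq[\bb]_{\possim}$ holds for all $d$ and is already recorded above. Concretely I must show: if $\bb,\bb'\in\Conf_m^d$ with $d>|\bb|$ and $\beta(\bb)\possim\beta(\bb')$, then $\bb\sim\bb'$. By density of regular braids I may assume $\bb$ and $\bb'$ are regular, so that the positive words $\beta(\bb)=\sigma_{k_1}\cdots\sigma_{k_\ell}$ and $\beta(\bb')$ are well defined with $\ell=|\bb|$. A positive conjugacy $\beta(\bb)\possim\beta(\bb')$ is, by Definition \ref{defn:equiv12}, a finite chain of elementary moves of three types: the far commutations $\sigma_i\sigma_j\doteq\sigma_j\sigma_i$ with $|i-j|\ge2$, the braid relations $\sigma_i\sigma_{i+1}\sigma_i\doteq\sigma_{i+1}\sigma_i\sigma_{i+1}$, and the cyclic moves $\sigma_{i_1}\cdots\sigma_{i_n}\equiv\sigma_{i_2}\cdots\sigma_{i_n}\sigma_{i_1}$. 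Since each move preserves word length, every intermediate word again has length $\ell<d$, so it suffices to realize a single elementary move as a positive isotopy inside $\Conf_m^d$.

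The geometric input is a spreading lemma: when $d>|\bb|=\ell$ there are more PL-intervals $[\tfrac{j}{d},\tfrac{j+1}{d}]$ than crossings, so at least $d-\ell\ge 1$ of them are crossing-free. I would first show that this slack lets one positively isotope $\bb$ into a normal form in which every interval carries at most one crossing. The mechanism is to push a crossing from an interval containing two or more transpositions into an adjacent empty interval by adjusting the relevant anchor points $x_j^\mu$, while maintaining the transversality condition (c); cascading the empty interval as a travelling buffer spreads all $\ell$ crossings one per interval, leaving at least one free interval to spare. Positive isotopy invariance of $|\bb|$ guarantees that the budget $d$ is never exceeded during this process.

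With the crossings isolated, each elementary move becomes local. A far commutation involves two crossings on disjoint pairs of strands; their $t$-order can be exchanged by sliding the crossing points past one another, which is a positive isotopy and in fact needs no extra room. A cyclic move is realized by exploiting the closure relation $x_d^\mu=x_0^{\tau(\mu)}$: using a crossing-free interval one slides the first crossing around the period into the last interval, reindexing the word cyclically. The braid relation $\sigma_i\sigma_{i+1}\sigma_i\doteq\sigma_{i+1}\sigma_i\sigma_{i+1}$ is the genuine difficulty: here three crossings among the strands in positions $i,i+1,i+2$ must be reconfigured by passing the middle strand across the crossing of the outer two, a discrete Reidemeister-III move. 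Having isolated these three crossings in consecutive intervals flanked by empty ones, the spare anchor points supply exactly the room to carry the middle strand across while keeping (c) satisfied at every time, so the move is again a positive isotopy.

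Chaining the isotopies that realize the successive elementary moves yields $\bb\sim\bb'$, which gives $[\bb]_{\possim}\subseteq[\bb]$ and hence $[\bb]=[\bb]_{\possim}$, i.e.\ $[\bb]$ is free. The main obstacle is the braid relation together with the underlying spreading lemma: everything hinges on showing that $d>|\bb|$ provides enough anchor points to perform the triangle move without creating a tangency or a negative crossing, that is, without violating the transversality condition of Definition \ref{PL}. I expect the careful bookkeeping of anchor positions during this move, rather than the far-commutation or cyclic moves, to be where the hypothesis $d>|\bb|$ is genuinely used.
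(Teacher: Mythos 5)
Your overall strategy cannot be checked against a proof in this paper, because the paper contains none: Proposition \ref{prop:free} is imported verbatim from \cite{BraidConleyIndex}, Prop.\ 27. Measured against that reference, your skeleton is the right one and of the same genre: reduce positive conjugacy to its elementary generators (far commutation, the braid relation, the cyclic move), realize each generator by a path in $\Conf_m^d$, and use the slack $d-|\bb|\ge 1$ as a travelling buffer. The length-homogeneity observation (every intermediate word has length $|\bb|<d$) is correct and is what lets the buffer survive the whole chain.

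There are, however, two concrete gaps. First, you locate the use of $d>|\bb|$ in the Reidemeister-III move, but that move costs nothing: a triple point in the interior of an interval is a legitimate point of $\Conf_m^d$, since condition (c) of Definition \ref{PL} constrains coincidences only \emph{at anchor points}; once the three letters $\sigma_i\sigma_{i+1}\sigma_i$ occupy a single interval, passing the three linear pieces through concurrency flips the word to $\sigma_{i+1}\sigma_i\sigma_{i+1}$, and likewise two simultaneous crossings realize a far commutation --- no flanking empty intervals needed. The hypothesis is genuinely used elsewhere: on each interval the strands are \emph{linear}, so any fixed pair of strands crosses at most once per interval; in particular $\sigma_i^2$ can never sit inside one interval, and it is this per-interval quota that obstructs redistributing crossings when $d\le|\bb|$. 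The paper's Example \ref{exm:free2} exhibits exactly this: $\sigma_1\sigma_2^2\sigma_1$ and $\sigma_2\sigma_1^2\sigma_2$ are cyclic rotations of one another, yet for $d=2$ the rotation is blocked because the intermediate based words $\sigma_2^2\sigma_1^2$ and $\sigma_1^2\sigma_2^2$ would force a doubled letter into a single interval. Your ``spreading lemma'' must be proved against precisely this constraint --- it is the heart of the proposition, and as written it is an assertion, not a proof. Second, there is an endgame gap: chaining your realizations produces some $\bb''$ with $\beta(\bb'')=\beta(\bb')$, not $\bb'$ itself; you still need that two discretized braids of the same period realizing the same based word are positively isotopic. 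This is a connectedness statement for the set of anchor configurations with prescribed word (the strict orderings at each anchor time cut out convex cones, so it is not hard), but without it ``chaining the isotopies yields $\bb\sim\bb'$'' does not close.
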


Taking $d$ sufficiently large is a sufficient condition to ensure free braid classes, but this condition is not a necessary condition.

\begin{figure}[tb]
\centering
\begin{tikzpicture}[xscale=0.4, yscale=0.4, line width = 1.5pt] % XSCALE RESCALES VERTICALLY, YSCALE RESCALES HORIZONTALLY
  \draw[-] (0,0) -- (2,6) -- (4,0); 
  \draw[fill] (0,0) circle (0.07)
              (2,6) circle (0.07)
              (4,0) circle (0.07);

   \draw[-] (0,2) -- (2,2) -- (4,2);            
   \draw[fill] (0,2) circle (0.07)
               (2,2) circle (0.07)
               (4,2) circle (0.07);

   \draw[-] (0,4) -- (2,4) -- (4,4);
   \draw[fill] (0,4) circle (0.07)
               (2,4) circle (0.07)
               (4,4) circle (0.07);
\foreach \x in {0,4}
 \draw[line width=1.0pt, -] (\x,-1) -- (\x,7); 

\end{tikzpicture}
\qquad
\qquad
\begin{tikzpicture}[xscale=0.4, yscale=0.4, line width = 1.5pt]
  \draw[-] (0,6) -- (2,0) -- (4,6); 
  \draw[fill] (0,6) circle (0.07)
              (2,0) circle (0.07)
              (4,6) circle (0.07);

   \draw[-] (0,2) -- (2,2) -- (4,2);            
   \draw[fill] (0,2) circle (0.07)
               (2,2) circle (0.07)
               (4,2) circle (0.07);

   \draw[-] (0,4) -- (2,4) -- (4,4);
   \draw[fill] (0,4) circle (0.07)
               (2,4) circle (0.07)
               (4,4) circle (0.07);
\foreach \x in {0,4} 
 \draw[line width=1.0pt, -] (\x,-1) -- (\x,7); 

\end{tikzpicture}
\qquad
\qquad
\begin{tikzpicture}[xscale=0.4, yscale=0.4, line width = 1.5pt]
  \draw[-] (0,0) -- (2,6) -- (4,3.5) -- (6,0); 
  \draw[fill] (0,0) circle (0.07)
              (2,6) circle (0.07)
              (4,3.5) circle (0.07)
              (6,0) circle (0.07);

   \draw[-] (0,2) -- (2,2) -- (4,2) -- (6,2);            
   \draw[fill] (0,2) circle (0.07)
               (2,2) circle (0.07)
               (4,2) circle (0.07)
               (6,2) circle (0.07);

   \draw[-] (0,4) -- (2,4) -- (4,4) -- (6,4);
   \draw[fill] (0,4) circle (0.07)
               (2,4) circle (0.07)
               (4,4) circle (0.07)
               (6,4) circle (0.07);
\foreach \x in {0,6}
 \draw[line width=1.0pt, -] (\x,-1) -- (\x,7); 
\end{tikzpicture}
\caption{The left and middle diagrams show  representatives  $\bb,\bb'\in \Conf_3^2$ in Example \ref{exm:free2}.
The right diagram shows a representative the same topological braid class in $\Conf_3^3$ (free).}
\label{fig:free1}
\end{figure}
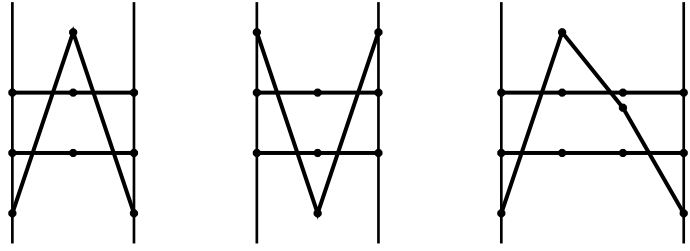

\begin{exm}\label{exm:free2}
Given the braids $\bb\in \Conf_3^2$ with  $\bb^1 = (1,4,1)$, $\bb^2 = (2,2,2)$ and $\bb^3 = (3,3,3)$ and consider the braid
class $[\bb]$, see Figure \ref{fig:free1}[left and middle]. Since $\bb$ is regular, $\beta(\bb)$ is uniquely defined and $\beta(\bb) = \sigma_1\sigma_2^2\sigma_1$.
Also define $\bb'\in \Conf_3^2$ with $\bb'^1 = (4,1,4)$, $\bb'^2=\bb^2$ and $\bb'^3=\bb^3$ and the braid
class $[\bb']$. Since $\bb'$ is also regular we have the unique braid word $\beta(\bb') = \sigma_2\sigma_1^2\sigma_2$.
Observe that $ \sigma_1\sigma_2^2\sigma_1\possim \sigma_2\sigma_1^2\sigma_2$, which implies that $\bb$ and
$\bb'$ are topologically equivalent. However, $\bb$ and $\bb'$ are not positively isotopic in $\Conf_3^2$ and $[\bb]$ and
$[\bb']$ are two different path components of $\Conf_3^2$.
The positive conjugacy class of $\sigma_1\sigma_2^2\sigma_1$ is given by $\llb\sigma_1\sigma_2^2\sigma_1\rrb = \{\sigma_1\sigma_2^2\sigma_1,\sigma_2^2\sigma_1^2,
\sigma_2\sigma_1^2\sigma_2,\sigma_1^2\sigma_2^2\}$. The words $\sigma_2^2\sigma_1^2$ and $\sigma_1^2\sigma_1^2$
are not represented in $\Conf_3^2$.
If we consider $ \bb''\in \Conf_3^3$ given by $\bb'' = \bigl\{ (1,4,1,1), (2,2,2,2),  (3,3,3,3)\bigr\}$, then
the associated braid class $[\bb'']$ is free, which confirms that the condition in Proposition \ref{prop:free} is not a necessary
condition, see Figure \ref{fig:free1}[right].
\end{exm}

%The equivalence classes $[\bb]_{\possim}$ are in one-to-one correspondence with the \emph{positive braid monoid} $\BB_m^+$.
 Let 
$\beta = \sigma_{i_1} \cdots \sigma_{i_d}\in \BB_m^+$ be a positive braid word, then define
\[
\ev_q(\beta) :=  \bb = \{ \bb^\mu\} \in \Conf_m^{d+q},\quad  \bb^\mu = (x_j^\mu),\quad\mu=1,\cdots, m,~~q\ge 0,
\]
with $x_0^\mu = \mu$, $x_j^\mu = x_0^{\sigma_{i_1} \cdots \sigma_{i_j}(\mu)}$, $j=1,\cdots,d$, and $x_{d+q}^\mu =\cdots = x_d^\mu$. 
The expression $\sigma_{i_1} \cdots \sigma_{i_j}(\mu)$, $\mu = 1,\cdots,m$ describes the permutation of the set $\{1,\cdots,m\}$, where
$\sigma_{i_1} \cdots \sigma_{i_j}$ is regarded as a concatenation of permutations given by the generators $\sigma_i$ interpreted as
a basic permutation of $i$ and $i+1$.
By Proposition \ref{prop:free}, $[\ev_q(\beta)]$ is free for all $q\ge 1$, and every
 $\llb\beta\rrb\in \CC\BB_m^+$  defines a free discrete braid class $[\ev_q(\beta)]$ in $\Conf_m^{d+q}$
 for all $q\ge 1$. 
% and therefore
%$\ev(\beta)$, $\beta\in \BB_m^+$ labels all topological discrete braid classes.

\subsection{Discrete 2-colored braid classes}
\label{subsec:discbr2}
On closed configuration spaces we define the following product:
\[ 
\bar\Conf_n^d\times\bar\Conf_m^d \to \bar\Conf_{n+m}^d,\quad (\ba,\bb) \mapsto  \ba\sqcup\bb,
\]
where $\ba\sqcup\bb$ is the disjoint union of the strands in $\ba$ and $\bb$ regarded as an element in $\bar\Conf_{n+m}^d$.
The definition yields a canonical permutation on the labels in $\ba\sqcup\bb$.
Define the space of \emph{2-colored discretized braids} as the space of ordered pairs
\begin{equation}
\label{eqn:2color}
\Conf_{n,m}^d := \bigl\{ \ba\rel\bb:=(\ba,\bb)~|~ \ba\sqcup\bb \in \Conf_{n+m}^d\bigr\}.
\end{equation}
The strand labels in $\ba$ range from $\mu=1,\cdots,n$ and the strand labels in $\bb$ range from $\mu=n+1,\cdots,n+m$.
The associated permutation $\tau_{\ba,\bb} = \tau_\ba \oplus\tau_\bb \in S_{n+m}$, where
 $\tau_\ba\in S_n$ and $\tau_\bb \in S_m$, and $\tau_\ba$ acts on the labels
$\{1,\cdots,n\}$ and $\tau_\bb$ acts on the labels $\{{n+1},\cdots,{n+m}\}$.
The strands $\ba = \{x_j^{\mu}\}$, $\mu=1,\cdots,n$ are the \emph{red}, or \emph{free} strands and
the strands $\bb = \{x_j^{\mu}\}$, $\mu=n+1,\cdots,n+m$ are the \emph{black}, or \emph{skeletal} strands.
%A 2-colored braid is denoted by $(\bc,\tau) = (\ba\rel\bb,\tau_1\tau_2) \in \Conf_{n,m}^d$.
A path component $[\ba\rel\bb]$ in $\Conf_{n,m}^d$ is called a \emph{2-colored discretized braid class}.
The canonical projections are given by
$\varpi\colon\Conf_{n,m}^d \to \Conf_m^d$ with $\ba\rel\bb \mapsto \bb$ and by
$\varpi^*\colon\Conf_{n,m}^d \to \Conf_n^d$ with $\ba\rel\bb \mapsto \ba$.
The mapping $\varpi$ yields a fibration
\begin{equation}
\label{eqn:fiberbundle2}
[\ba]\rel \bb \to [\ba\rel \bb] \to [\bb].
\end{equation}
%where a fixed $\bbeta \in [\bbeta]$ plays the role of base point.
 The pre-images
$\varpi^{-1}(\bb) = [\ba]\rel \bb\subset  \Conf_{n}^d$,   are called the \emph{relative discretized braid class fibers}.

There exists a natural embedding $\Conf_{n,m}^d \hookrightarrow \Conf_{n+m}^d$, defined by $\ba\rel\bb \mapsto \ba\sqcup\bb$.
Via the embedding we   define the notion of topological equivalence of two 2-colored discretized braids:
$\ba\rel\bb \possim \ba'\rel\bb'$ if $\ba\sqcup\bb \possim \ba'\sqcup\bb'$. The associated equivalence classes are denoted by $[\ba\rel\bb]_{\possim}$, which are
not necessarily connected sets in $\Conf_{n,m}^d$. A 2-colored discretized braid class $[\ba\rel\bb]$ is free if $[\ba\rel\bb] = [\ba\rel\bb]_{\possim}$.
If $d>|\ba\sqcup\bb|$, then  $[\ba\rel\bb]$ is free by Proposition \ref{prop:free}. 

The set of collapsed singular braids in $\bar \Conf_{n,m}^d$ is given by:
\[
\begin{aligned}
\Sigma^- := \{\ba\rel\bb\in \bar\Conf_{n,m}^d~|~ \ba^\mu =~&\ba^{\mu'}, \hbox{or~} \ba^\mu=\bb^{\mu'}\\
&\hbox{for some~~}\mu\not=\mu',~\hbox{and~} \bb\in \Conf_m^d\}.
\end{aligned}
\]
A 2-colored discretized braid class $[\ba\rel\bb]$ is \emph{proper} if 
$\partial [\ba\rel\bb]_{\possim} \cap \Sigma^- = \varnothing$.
If a braid class $[\ba\rel\bb]$ is not proper it is called \emph{improper}.
In \cite{BDV} properness is considered in a more general setting. The notion of properness in this paper coincides with weak properness in \cite{BDV}.
%
%A 2-colored braid $\ba\rel \bb$ is  \emph{improper} of (i) a component of $\ba$ can be contracted to a single strand, or (ii)
%a component of $\ba$ can be contracted onto a component of $\bb$, %or (iii) a component of $\balpha$ can be contracted onto $\partial \disc$,
%all via positive isotopies without collapsing $\bb$. 
%A 2-colored braid $\balpha\rel \bbeta$ is \emph{proper} if it is not improper.
%Since $[\ba\rel\bb]$ is path connected, the existence of an improper discrete braid implies that all
%discrete braids in $[\ba\rel\bb]$ are improper.
%This motivates the following definition.
%\begin{defn}
%{\em
%A  2-colored braid class $[\ba \rel \bb]$ is proper if it contain a proper representative.
%}
%\end{defn}

%
%A 2-colored discretized braid class $[\ba\rel\bb]$ if said to be proper is the associated braid
%class $[\balpha\rel\bbeta]$ is proper. 
A 2-colored discretized braid class $[\ba\rel\bb]$
is called \emph{bounded} if its fibers are bounded as  sets in $\rr^{nd}$. %A fiber is bounded if and only if all fibers are bounded.
%Therefore, if the fibers are bounded the braid class $[\ba\rel\bb]$ is said to be bounded. 
Note that $[\ba\rel\bb]$
is \emph{not} a bounded set in $\rr^{(n+m)d}$.

%A 2-colored discretized braid class $[\ba\rel\bb]$
%is free if the it is free as a braid class in $\Conf_{n+m}^d$.

\subsection{Algebraic presentations}
\label{subsec:algpres}
Discretized braid classes are presented via the positive conjugacy classes of the positive braid monoid $\BB_m^+$.
For 2-colored discretized braids we seek a similar presentation.

%To a regular 2-colored discrete braid $\ba\rel\bb$ one can assign a unique positive word $\gamma$:
%
%The information about the 2-coloring is lost in the presentation $\gamma\in \BB_{n+m}^+$.
In order to keep track of colors we define coloring on words in $\BB_{n+m}^+$.
Words in $\BB_{n+m}^+$ define associated permutations $\tau$ and the permutations $\tau$ yield partitions of the set $\{1,\cdots,n+m\}$.
Let $\gamma \in \BB_{n+m}^+$ be a word for which the induced partition   contains a union of equivalence classes $\aset \subset \{1,\cdots,n+m\}$
consisting of $n$ elements. The set $\aset$ is the \emph{red coloring} of length $n$ and the remaining partitions are colored black, denoted by $\bset$. 
%A way to encode coloring into a braid word is to define a permutation $\cset\in S_{n+m}$ such that $\cset \tau\cset^{-1} \in S_n\times S_m$.
%We consider pairs
%\[
%\{(\gamma,\cset)~|~\gamma\in \BB_{n+m}^+,~\cset\in S_{n+m},~\cset \tau\cset^{-1} \in S_n\times S_m\}.
%\]
%The coloring $\aset$ is found via $\aset = \cset(\{1,\cdots,n\})$.
The pair $(\gamma,\aset)$ is 
called a 2-colored positive braid word, see Figure \ref{fig:relative1}.
For a given coloring $\aset \subset \{1,\cdots,n+m\}$ of length $n$ the set of all words $(\gamma,\aset)$ forms a monoid which is denoted by $\BB^+_{n,m,\aset}$ and is referred as
the \emph{2-colored braid monoid} with coloring $\aset$.

Two pairs $(\gamma,\aset)$ and $(\gamma',\aset')$ are positively conjugate if $\gamma\possim \gamma'$ and
$\aset' = \zeta^{-1}(\aset)$, where $\zeta$ is a permutation conjugating the induced permutations $\tau_\gamma$ and $\tau_{\gamma'}$, i.e.  $\tau_{\gamma'} = \zeta\tau_\gamma\zeta^{-1}$.
If $\xi$ is another permutation such that $\tau_{\gamma'} = \xi\tau_\gamma\xi^{-1}$, then 
$ \zeta\tau_\gamma\zeta^{-1} =  \xi\tau_\gamma\xi^{-1}$. This implies that $\tau_\gamma =  \zeta^{-1}\xi\tau_\gamma\xi^{-1}\zeta$
and thus $\xi^{-1}\zeta(\aset) = \aset$, which is equivalent to 
$\zeta^{-1}(\aset) = \xi^{-1}(\aset)$. This shows
that the conjugacy relation in well-defined.
Positive conjugacy for 2-colored braid  words is again denoted by $(\gamma,\aset) \possim (\gamma', \aset')$
and a conjugacy class is denoted by $\llb\gamma,\aset\rrb$.
The set of 2-colored positive conjugacy classes 
with red colorings of length $n$ is denoted by $\CC\BB_{n,m}^+$.
%If $\ba\rel\bb \possim \ba'\rel\bb'$, then $(\gamma,\aset) \possim (\gamma', \aset')$. %which yields the bijection $[\ba\rel\bb]_{\possim} \longleftrightarrow \llb\gamma,\aset\rrb$.

The words corresponding   to the different colors $(\gamma,\aset)$  can be derived from the information in $(\gamma,\aset)$.
Let $\aset_0\subset \aset$ be a cycle of length $\ell\le n$ and let $k  \in \aset_0$. If $\gamma = \sigma_{i_1}\cdots \sigma_{i_d}$, then we define
an $\ell$-periodic sequence $\{k_j\}$, with 
\[
%k_0 = k,\quad\hbox{and}\quad k_{j} = \sigma_{i_1}\cdots\sigma_{i_j}(k), ~~~j=1,\cdots,\ell d,
k_0 = k,\quad\hbox{and}\quad k_{j} = \sigma_{i_j}(k_{j-1}), ~~~j=1,\cdots,\ell d,
\]
by considering the word $\gamma^\ell$. Now use the following rule: if $k_j-k_{j-1} \not = 0$, remove 
$\sigma_{i_{j'}}$ from $\gamma$, for $j=1,\cdots, \ell d$, where $j' =j \!\!\mod d \in \{1,\cdots,d\}$.
Moreover, $\sigma_{i_j}$ is replaced by $\sigma_{i_j-1}$, if $k_j=k_{j-1}<i_j$ and $\sigma_{i_j}$ remains unchanged otherwise.
We repeat this procedure for all cycles in $\aset$ and we obtain the mapping $(\gamma,\aset) \mapsto \beta \in \BB_m^+$ denoted by
\[
\pi_{\aset}\colon \BB^+_{n,m,\aset} \to \BB^+_m.
\]
By considering the complementary color $\bset$ we construct a mapping $(\gamma,\bset) \mapsto \alpha\in \BB_n^+$ using the same scheme.

With the notion of coloring braid words we can encode the information of a 2-colored discretized braid $\ba\rel\bb$ in a 2-colored word
$(\gamma,\aset)$. Given $\ba\rel\bb\in \Conf_{n,m}^d$ (regular), we define 
\begin{equation}
\label{eqn:colored1}
\ba\rel\bb \mapsto  \gamma(\ba\rel\bb) := \beta(\ba\sqcup\bb),
\end{equation}
 cf.\ \eqref{eqn:word1} and the coloring $\aset = \cset^{-1}(\{1,\cdots,n\})$, where the permutation $\cset$
is defined as follows. Order the coordinates $x_0^{\mu_1} < \cdots < x_0^{\mu_{n+m}}$ and define
\[
\cset^{-1} = \left(\begin{array}{cccc}1 & 2 & \cdots & n+m \\\mu_1 & \mu_2 & \cdots & \mu_{n+m}\end{array}\right).
\]
The permutations $\tau_\gamma$ and $\tau_{\ba,\bb}$ are conjugated: $\tau_\gamma = \cset \tau_{\ba,\bb} \cset^{-1}$.
The mapping $\ba\rel\bb \mapsto (\gamma,\aset)$ is well-behaved under positive conjugacy: 
$\ba\rel\bb \possim \ba'\rel\bb'$ implies $(\gamma,\aset) \possim (\gamma',\aset')$.

Conversely, every positive conjugacy class $\llb\gamma,\aset\rrb$ determines a 2-colored discretized braid class $[\ba\rel\bb]_{\possim}$ via
the mapping $\gamma \mapsto \ev_q(\gamma) \in \Conf_{n+m}^{d+q}$, with $d$ the number of generators in $\gamma$ and $q\ge 0$, cf.\ Figure \ref{fig:relative1}[right]. 
The representation  $\ba \rel \bb\in \Conf_{n,m}^{d+q}$ is obtained from the coloring $\aset$.
%which yields the bijection $[\ba\rel\bb]_{\possim} \longleftrightarrow \llb\gamma,\aset\rrb$ in $\Conf_{n,m}^{d+q}$, $q\ge 0$  and 
%$[\ba\rel\bb] \longleftrightarrow \llb\gamma,\aset\rrb$ in $\Conf_{n,m}^{d+q}$, $q\ge 1$. 

\begin{defn}
\label{defn:proper2}
A positive conjugacy class $\llb\gamma,\aset\rrb$ is called proper if the associated discretized braid class  $[\ba\rel\bb]_{\possim}$  is  proper, cf.\ Section \ref{subsec:discbr2}.
%2-colored discrete braid class.
\end{defn}

%The sub-monoid consisting of such pairs $(\gamma,\aset)$ is denoted by 
%$\BB_{n,m}^+(\aset) \hookrightarrow   \BB_{n+m}^+$, via $(\gamma,\aset) \mapsto \gamma$.
%A \emph{initial coloring} on a word $\gamma \in \BB_{n+m}^+$ is a subset 
%$\aset,\bset$

\begin{figure}[htb]
\centering
\begin{tikzpicture}[xscale=0.4, yscale=0.3, line width = 1.5pt]
   \draw[-] (0,0) -- (2,4) -- (4,4);            
   \draw[fill] (0,0) circle (0.07)
               (2,4) circle (0.07)
               (4,4) circle (0.07);

   \draw[-] (0,4) -- (2,0) -- (4,0);            
   \draw[fill] (0,4) circle (0.07)
               (2,0) circle (0.07)
               (4,0) circle (0.07);

   \draw[-,color=red] (0,2) -- (2,6) -- (4,2);            
   \draw[fill, color=red] (0,2) circle (0.07)
                           (2,6) circle (0.07)
                           (4,2) circle (0.07);

 \foreach \x in {0,4}
 \draw[line width=1.0pt, -] (\x,-1) -- (\x,7); 
\end{tikzpicture}
\qquad
\qquad
\begin{tikzpicture}[xscale=0.4, yscale=0.4, line width = 1.5pt]
   \draw[-] (0,0) -- (2,0) -- (4,2) -- (6,4);            
   \draw[fill] (0,0) circle (0.07)
               (2,0) circle (0.07)
               (4,2) circle (0.07)
               (6,4) circle (0.07);

   \draw[-] (0,4) -- (2,2) -- (4,0) -- (6,0);            
   \draw[fill] (0,4) circle (0.07)
               (2,2) circle (0.07)
               (4,0) circle (0.07)
               (6,0) circle (0.07);

   \draw[-,color=red] (0,2) -- (2,4) -- (4,4) -- (6,2);            
   \draw[fill, color=red]  (0,2) circle (0.07)
                           (2,4) circle (0.07)
                           (4,4) circle (0.07)
                           (6,2) circle (0.07);

 \foreach \x in {0,6}
 \draw[line width=1.0pt, -] (\x,-1) -- (\x,5); 
\end{tikzpicture}
\caption{Representations of the relative braid class $[\ba\rel\bb]$.}
\label{fig:relative1}
\end{figure}

\begin{exm}
Consider $\ba\rel\bb\in \Conf_{1,2}^2$, with  strands $\ba = \{(2,4,2)\}$, $\bb = \{(1,3,3), (3,1,1)\}$.
Since the strands in $\bb$ have labels $\mu=2,3$,
the permutation is given by $\tau_{\ba,\bb} = (23)$ and $\gamma = \beta(\ba\sqcup \bb) = \sigma_2\sigma_1\sigma_2$.
The coloring permutation is given as follows: $x_0^2<x_0^1<x_0^3$ and therefore
$\cset^{-1} = (12)$. % and thus $\cset = (12)$. 
The red coloring is given by $\aset = \cset(\{1\}) = \{2\}$.
We verify that $\tau_\gamma = \cset \tau_{\ba,\bb} \cset^{-1} = (2)(13) = (13)$, see Figure \ref{fig:relative1}[left].
The topological type of $\ba\rel \bb$ is given by the $\llb\sigma_2\sigma_1\sigma_2,\{2\}\rrb$ and $[\ba\rel\bb]_{\possim}$ is a proper and free 2-colored discrete braid class, see Figure \ref{fig:relative1}[right].
In order to compute the skeletal braid word $\beta$ we consider the following sequence:
\[
k_0 =2,\quad k_1 = \sigma_2(2) = 3,\quad k_2=\sigma_1(3) = 3,\quad k_3 = \sigma_2(3) =2.
\]
This yields the differences $k_1-k_0= 1$, $k_2-k_1 = 0$ and $k_3-k_2 = -1$, and therefore both letters $\sigma_2$ are removed from $\gamma$,
which gives $\gamma \mapsto \beta = \sigma_1$.
\begin{figure}[hbt]
\centering
\begin{tikzpicture}[xscale=0.4, yscale=0.25, line width = 1.5pt]
   \draw[-] (0,0) -- (2,4) -- (4,4);            
   \draw[fill] (0,0) circle (0.07)
               (2,4) circle (0.07)
               (4,4) circle (0.07);

   \draw[-] (0,4) -- (2,0) -- (4,0);            
   \draw[fill] (0,4) circle (0.07)
               (2,0) circle (0.07)
               (4,0) circle (0.07);

   \draw[-,color=red] (0,2) -- (2,6) -- (4,2);            
   \draw[fill, color=red] (0,2) circle (0.07)
                           (2,6) circle (0.07)
                           (4,2) circle (0.07);

   \draw[-] (0+4,0) -- (2+4,4) -- (4+4,4);            
   \draw[fill] (0+4,0) circle (0.07)
               (2+4,4) circle (0.07)
               (4+4,4) circle (0.07);

   \draw[-] (0+4,4) -- (2+4,0) -- (4+4,0);            
   \draw[fill] (0+4,4) circle (0.07)
               (2+4,0) circle (0.07)
               (4+4,0) circle (0.07);

   \draw[-,color=red] (0+4,2) -- (2+4,6) -- (4+4,2);            
   \draw[fill, color=red] (0+4,2) circle (0.07)
                           (2+4,6) circle (0.07)
                           (4+4,2) circle (0.07);

\foreach \x in {0,4,8}
 \draw[line width=1.0pt, -] (\x,-1) -- (\x,7); 

\foreach \x in {2,6}
 \draw[loosely dashed, line width=1.0pt, -] (\x,-1) -- (\x,7); 
\end{tikzpicture}
\qquad
\begin{tikzpicture}[xscale=0.4, yscale=0.25, line width = 1.5pt]
   \draw[-] (0,0) -- (2,4) -- (4,4);            
   \draw[fill] (0,0) circle (0.07)
               (2,4) circle (0.07)
               (4,4) circle (0.07);

   \draw[-] (0,4) -- (2,0) -- (4,0);            
   \draw[fill] (0,4) circle (0.07)
               (2,0) circle (0.07)
               (4,0) circle (0.07);

   \draw[-,color=red] (0,2) -- (2,3) -- (4,2);            
   \draw[fill, color=red] (0,2) circle (0.07)
                           (2,3) circle (0.07)
                           (4,2) circle (0.07);

   \draw[-] (0+4,0) -- (2+4,4) -- (4+4,4);            
   \draw[fill] (0+4,0) circle (0.07)
               (2+4,4) circle (0.07)
               (4+4,4) circle (0.07);

   \draw[-] (0+4,4) -- (2+4,0) -- (4+4,0);            
   \draw[fill] (0+4,4) circle (0.07)
               (2+4,0) circle (0.07)
               (4+4,0) circle (0.07);

   \draw[-,color=red] (0+4,2) -- (2+4,3) -- (4+4,2);            
   \draw[fill, color=red] (0+4,2) circle (0.07)
                           (2+4,3) circle (0.07)
                           (4+4,2) circle (0.07);
\foreach \x in {0,4,8}
 \draw[line width=1.0pt, -] (\x,-1) -- (\x,7); 

\foreach \x in {2,6}
 \draw[loosely dashed, line width=1.0pt, -] (\x,-1) -- (\x,7); 
\end{tikzpicture}
\qquad
\begin{tikzpicture}[xscale=0.4, yscale=0.25, line width = 1.5pt]
   \draw[-] (0,0) -- (2,4) -- (4,4);            
   \draw[fill] (0,0) circle (0.07)
               (2,4) circle (0.07)
               (4,4) circle (0.07);

   \draw[-] (0,4) -- (2,0) -- (4,0);            
   \draw[fill] (0,4) circle (0.07)
               (2,0) circle (0.07)
               (4,0) circle (0.07);

   \draw[-,color=red] (0,5) -- (2,3) -- (4,5);            
   \draw[fill, color=red] (0,5) circle (0.07)
                           (2,3) circle (0.07)
                           (4,5) circle (0.07);

   \draw[-] (0+4,0) -- (2+4,4) -- (4+4,4);            
   \draw[fill] (0+4,0) circle (0.07)
               (2+4,4) circle (0.07)
               (4+4,4) circle (0.07);

   \draw[-] (0+4,4) -- (2+4,0) -- (4+4,0);            
   \draw[fill] (0+4,4) circle (0.07)
               (2+4,0) circle (0.07)
               (4+4,0) circle (0.07);

   \draw[-,color=red] (0+4,5) -- (2+4,3) -- (4+4,5);            
   \draw[fill, color=red] (0+4,5) circle (0.07)
                           (2+4,3) circle (0.07)
                           (4+4,5) circle (0.07);
\foreach \x in {0,4,8}
 \draw[line width=1.0pt, -] (\x,-1) -- (\x,7); 

\foreach \x in {2,6}
 \draw[loosely dashed, line width=1.0pt, -] (\x,-1) -- (\x,7); 
\end{tikzpicture}
\caption{Positively conjugate representations.}
\label{fig:relative2}
\end{figure}
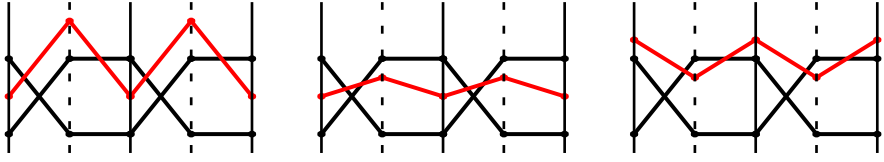

Figure \ref{fig:relative2}[left] shows shows a representation  of $(\gamma,\aset)$ and Figure \ref{fig:relative2}[right] shows a positively conjugate representation $(\gamma',\aset')$.
In the latter case $\aset' = \{3\}$, $\cset=(123)$ and $\aset' = \cset^{-1}(\{1\}) = (132)(1) = \{3\}$. It follows that $\tau_{\gamma'} = \cset\tau_{\ba,\bb} \cset^{-1} = (123)(23)(132) = (12)$. Moreover, since $\gamma' = \zeta \gamma\zeta^{-1}$, with $\zeta = (23)$ it follows that  $\tau_{\gamma'} = \zeta\tau_\gamma
\zeta^{-1} = (23)(13(23) = (12)$.
\end{exm}
\section{Discrete braid invariants}
\label{subsec:discbrinv12}
%Before introducing the discrete braid invariant we start with the Conley index for discretized braids as introduced in \cite{BraidConleyIndex}.
%
%
%\subsection{The Conley index for discretized braids}\label{subsec:PositiveIndex}
We summarize the construction of a topological invariant for 2-colored relative braid classes as described in \cite{BraidConleyIndex,GVV-pre}.
Let  $\ba\rel \bb\in \Conf_{n,m}^d$ represent  a proper, bounded discretized 2-colored braid class $[\ba\rel \bb]$. Then, the fiber $[\ba]\rel \bb$ defines
a bounded set in $\rr^{nd}$.
%Even if $[\ba\rel\bb]$ is free, the fibers $[\ba]\rel \bb$ need not be connected!  

%Recall from \cite{BraidConleyIndex} that 
A sequence $\RR=\{\RR_j\}$ of functions $\RR_j\colon \rr^3\to \rr$,
 which satisfy $\partial_1\mathcal{R}_j>0$ and $\partial_3\mathcal{R}_j>0$ is called a \emph{parabolic recurrence relation}.
%  and the associate recurrence relation in \eqref{eqn:parabrec} is called
% a \emph{parabolic recurrence relation}.
%cf.\ \eqref{eqn:parabrec},
From   \cite[Lem.\ 55-57]{BraidConleyIndex} there exists    a parabolic recurrence relation $\RR=\{\RR_j\}$
such that $\bb$ is a zero for $\RR$, i.e. $\RR_j(x_{j-1}^{\mu_\nu},x_j^{\mu_\nu},x_{j+1}^{\mu_\nu})=0$ for all $j\in \zz$ and for all ${\nu}=1,\cdots,m$.
The recurrence relation $\RR$ may regarded as vector field and is integrated via the equations
\begin{equation}
\label{parabolicvectorfield}
 \frac{d}{ds}x^{{\mu_\nu}}_{j}=\mathcal{R}_{j}(x^{{\mu_\nu}}_{j-1},x^{{\mu_\nu}}_{j},x^{{\mu_\nu}}_{j+1}),\quad \nu=1,\cdots,m.
\end{equation}
%Let $N_k$ be the closure in $\rr^{md}$ of a connected component $[\ba_k]\rel \bb$ of $[\ba]\rel \bb$. Then,
Let $N$ denoted the closure in $\rr^{nd}$ of  $[\ba]\rel \bb$.
%\footnote{We assume that $[\ba]\rel \bb$ is connected since $[\ba\rel\bb] \to [\bb]$ is a fiber bundle.} Then,
By   \cite[ Prop.\ 11 and Thm.\ 15]{BraidConleyIndex}, the set $N$ is an isolating neighborhood for the parabolic flow generated by
Equation \eqref{parabolicvectorfield}.
We define $\hh(\ba\rel\bb)$ as the homotopy Conley index of $\Inv(N,\RR)$, cf.\ \cite{BraidConleyIndex}, \cite{ConleyIndex}.
The Conley index is independent of the choice of parabolic recurrence relations $\RR$ for which $\RR(\bb)=0$, cf.\ \cite[Thm.\ 15(a)-(b)]{BraidConleyIndex},
as well as the choice of the fiber, i.e. $\ba\rel\bb \sim \ba'\rel\bb'$, then $\hh(\ba\rel\bb) = \hh(\ba'\rel\bb')$, cf.\ \cite[Thm.\ 15(c)]{BraidConleyIndex}.
This makes $\hh(\ba\rel\bb)$  an invariant of the discrete 2-colored braid class $[\ba\rel\bb]$.

There is an intrinsic way to define $\hh(\ba\rel\bb)$ without using parabolic recurrence relations.
%To define the exit set $N^-$, consider any point $\ww$ on 
%$\partial N\subset\Sigma$. 
%There exists a small neighborhood
%$W$ of $\ww$ in $\bar\Conf_d^n$ for which the subset $W-\Sigma$ 
%consists of a finite number of connected components $\{W_j\}$.
%Assume that $W_0 = W\cap N$. 
We define $N^-\subset \partial N$ to be the set of boundary points for which the word
metric is locally maximal.
% on $W_0$, namely, 
%\begin{equation}\label{intrinsic}
%N^- := {\rm cl}\left\{
%	\ww\in\partial N : \word{W_0}\geq\word{W_j} \,\forall 
%		j>0 \right\}.
%\end{equation}
%
The pair $(N,N^-)$ is an index
pair for any parabolic system $\RR$ such that $\RR(\bb)=0$, and
thus by the independence of Conley index on $\RR$,  the pointed homotopy type
of
$N/N^-$ gives the Conley index: 
$
\hh(\ba\rel\bb) = [N/N^-],
$
see Figure \ref{fig:conley1} and \cite[Sect.\ 4.4]{BraidConleyIndex} for more details on the construction.
%The Conley index $HC_*([\ba]\rel\bb)$ can thus be 
%computed by choosing an appropriate representative $\vv \in [\vv]$ and 
%determining $N$ and $N^-$. A rigorous computer assisted approach 
%exists for computing the homological index using cube complexes 
%and digital homology \cite{AGV}.

\begin{figure}[hbt]
\centering
\begin{tikzpicture}[xscale=0.5, yscale=0.4, line width = 1.5pt]
  \draw[-] (0,4) -- (2,5) -- (4,4); 
  \draw[fill] (0,4) circle (0.07)
              (2,5) circle (0.07)
              (4,4) circle (0.07);
  \draw[-] (0,1) -- (2,0) -- (4,1);
  \draw[fill] (0,1) circle (0.07)
              (2,0) circle (0.07)
              (4,1) circle (0.07);

  \draw[-] (0,5) -- (2,1) -- (4,5); 
  \draw[fill] (0,5) circle (0.07)
              (2,1) circle (0.07)
              (4,5) circle (0.07);
  \draw[-] (0,0) -- (2,4) -- (4,0);
  \draw[fill] (0,0) circle (0.07)
              (2,4) circle (0.07)
              (4,0) circle (0.07);

  \draw[-, color=red] (0,2) -- (2,2.5) -- (4,2);
  \draw[color=red,fill] (0,2) circle (0.07)
                        (2,2.5) circle (0.07)
                        (4,2) circle (0.07);
\foreach \x in {0,4}
 \draw[line width=1.0pt, -] (\x,-1) -- (\x,6); 
\end{tikzpicture}
\qquad
\qquad
\begin{tikzpicture}[xscale=0.5, yscale=0.5, line width = 1.5pt]
  \draw[-,color=white] (1,1);
  \fill[color=red!90] (2,6) -- (6,6) -- (6,2) -- (2,2) -- (2,6);
  \draw[-] (2,6) -- (6,6) -- (6,2) -- (2,2) -- (2,6); 
  \draw[->] (1.5,5) -- (2.5,5);
  \draw[->] (1.5,4) -- (2.5,4);
  \draw[->] (1.5,3) -- (2.5,3);

  \draw[<-] (1.5+4,5) -- (2.5+4,5);
  \draw[<-] (1.5+4,4) -- (2.5+4,4);
  \draw[<-] (1.5+4,3) -- (2.5+4,3);
 
  \draw[<-] (5,1.5) -- (5,2.5);
  \draw[<-] (4,1.5) -- (4,2.5);
  \draw[<-] (3,1.5) -- (3,2.5);

  \draw[->] (5,1.5+4) -- (5,2.5+4);
  \draw[->] (4,1.5+4) -- (4,2.5+4);
  \draw[->] (3,1.5+4) -- (3,2.5+4);
\end{tikzpicture}
\qquad
\qquad
\begin{tikzpicture}[xscale=0.6, yscale=0.8, line width = 1.5pt]
  \draw[-,color=white] (1,1);
  \draw[fill=red!90] (1,3.5) ellipse (2.0 and 1.0);
  \draw[fill=white] (0.5,3.15) ellipse (1.0 and 0.6);
  \draw[fill] (0.07,2.63) circle (0.07);
\end{tikzpicture}
\caption{The Conley index for the braid in Example \ref{exm:exist2}.
The homotopy of the pionted space in given by $h(\ba\rel\bb) = \sbb^1$.}
\label{fig:conley1}
\end{figure}

The invariant $\hh(\ba\rel\bb)$ is not necessarily invariant with respect to the number of discretization points $d$.
In order to have   invariance also with respect to $d$, another invariant for discrete braid classes was introduced in \cite{BraidConleyIndex}.
Consider the equivalence class induced by the relation $\ba\rel\bb \possim \ba'\rel\bb'$ on $\Conf_{n,m}^d$, which defines  the class
$[\ba\rel\bb]_{\possim}$ of proper discrete 2-colored braids.
% whose embedding into $\LL C_{n,m}(\plane)$ are equivalent 2-colored braids.
Via the projection $\varpi\colon [\ba\rel\bb]_{\possim} \to [\bb]_{\possim}$ we obtain fibers $\varpi^{-1}(\bb)$.
Suppose $[\ba\rel\bb]_{\possim}$ is a bounded class, i.e. all fibers $\varpi^{-1}(\bb)$ are bounded sets in $\rr^{nd}$.
Following \cite[Def.\ 18]{BraidConleyIndex}
the closure $N$ of a fiber %$[\ba]_{\possim}\rel \bb = 
$\varpi^{-1}(\bb)$ is an isolating neighborhood since $\ba\rel\bb$ is proper.
Define $\HHH(\ba\rel\bb)$ as the homotopy Conley index of $N$.
If $[\ba_k]\rel \bb$ are the fibers belonging to the components $[\ba_k\rel\bb]$ of $[\ba\rel\bb]_{\possim}$, then  
\begin{equation}
\label{eqn:braidindex22}
%HC_*(\ba\rel\bb) := 
\HHH(\ba\rel\bb) := \bigvee_{k} \hh([\ba_k]\rel\bb). %= \bigvee_k [N_k/N_k^-].
\end{equation}
%It remains to show that the index $\HHH$ is an invariant $[\ba\rel\bb]_{\possim}$, i.e. $\HHH(\ba\rel\bb)$ is independent of $d$.
%
%According to \cite[Theorem 15(c)]{BraidConleyIndex} the Conley index of a braid fiber is independent of the chosen representative $\bb$.
%Let $\ba\rel\bb \sim \ba'\rel \bb'$ in $\Conf_{n+m}^d$, then
%$\hh([\ba]\rel\bb) = \hh([\ba']\rel\bb')$, which makes the Conley index an invariant of $[\ba\rel\bb]$ and is thus
%independent of the recurrence relation $\RR$ and the representative $\bb$. Define
%\begin{equation}
%\label{eqn:conley12}
%\hh(\ba\rel\bb) := \hh([\ba]\rel\bb).
%\end{equation}

Define the following extension mapping
 $\E:\Conf_m^d\to\Conf_m^{d+1}$, cf.\ \cite{BraidConleyIndex},
via concatenation with the trivial braid of period one: 
\begin{equation}
%(\E\uu)^\alpha_i := \begin{cases}
%     u_i^{\alpha} &	i=0,\ldots,d 	\\
%     u_d^{\alpha} &  i=d+1.
%\end{cases}
	(\E\bb)^\mu := \left\{
	\begin{array}{cl}
		x_j^{\mu} &	j=0,\ldots,d; 	\\
		x_d^{\mu} &  j=d+1 .
	\end{array}\right.
\end{equation}
Properness remains unchanged under the extension mapping $\E$, however boundedness may not be preserved.
% Properness remains unchanged under the extension mapping
%$\E$. Boundedness may not be preserved under the extension $\E$.
Define the skeletal augmentation:
\[
\A\colon \Conf_m^d \to \Conf_{m+2}^d,\quad \bb \mapsto \A\bb = \bb^* = \bb\cup \bb^-\cup\bb^+,
\]
where $\bb^- =\{\min_{\mu} \{x_j^\mu\} - 1\}_j$ and $\bb^+ =\{\max_{\mu} \{x_j^\mu\} + 1\}_j$.
If $[\ba\rel\bb]_{\possim}$ is bounded, then
 $\hh([\ba_k]\rel\bb) = \hh([\ba_k]\rel \bb^*)$ for all $k$ and therefore $\HHH(\ba\rel\bb) = \HHH(\ba\rel\bb^*)$.
One can define second skeletal augmentation:
\[
\B\colon \Conf_m^d \to \Conf_{m+2}^d,\quad \bb \mapsto \B\bb=\bb^\# = \bb\cup \bb^s\cup\bb^n,
\]
where $\bb^s =\{(-1)^j\min_{\mu} \{x_j^\mu\} - (-1)^j\}_j$ and $\bb^n =\{(-1)^j\max_{\mu} \{x_j^\mu\} + (-1)^j\}_j$.
As before, if $[\ba\rel\bb]_{\possim}$ is bounded, then
 $\hh([\ba_k]\rel\bb) = \hh([\ba_k]\rel \bb^\#)$ for all $k$ and therefore $\HHH(\ba\rel\bb) = \HHH(\ba\rel\bb^\#)$.

Consider the proper, bounded  2-colored braid classes $[\ba\rel\bb]_{\possim}$ and $[\E\ba\rel\E\bb]_{\possim}$.
The main result in \cite[Thm.\ 20]{BraidConleyIndex} is the Stabilization Theorem which states that
\begin{equation}
\label{eqn:braidindex13}
\HHH(\ba\rel\bb^*) = \HHH(\E\ba\rel \E\bb^*).
\end{equation}
The independence of $\HHH$ on the skeleton $\bb$ can be derived from the Stabilization Theorem.
Since a 2-colored discretized braid class is free when $d$ is sufficiently large, we have that $[\E^p\ba\rel \E^p\bb^*]$ is free
for some $p>0$ sufficiently large, and by stabilization $\HHH(\ba\rel\bb^*) = \HHH(\E^p\ba\rel \E^p\bb^*)$.
Let $\ba \rel \bb \possim \ba'\rel\bb'$, then $\E^p\ba\rel \E^p\bb^* \sim \E^p\ba'\rel \E^p\bb'^*$. %, for $p>0$ sufficiently large. By 
By \cite[Thm.\ 15(c)]{BraidConleyIndex}, a continuation can be constructed which proves that
$\HHH(\E^p\ba\rel \E^p\bb^*) =\HHH(\E^p\ba'\rel \E^p\bb'^*)$. Consequently,
\[
\HHH(\ba\rel\bb^*) = \HHH(\E^p\ba\rel \E^p\bb^*) = \HHH(\E^p\ba'\rel \E^p\bb'^*) = \HHH(\ba'\rel\bb'^*),
\]
which shows that the index $\HHH$ only depends on the topological type $\llb \gamma,\aset\rrb$, with $\gamma=\beta(\ba\rel\bb)$.
\begin{defn}
\label{defn:discrbrinv}
Let $\llb\gamma,\aset\rrb$ be proper, positive conjugacy class. Then, the  \emph{braid Conley index} is defined as
\begin{equation}
\label{eqn:relbrinv}
\HHH\llb\gamma,\aset\rrb := \HHH(\ba\rel\bb^*).
\end{equation}
\end{defn}
The  braid Conley index  $\HHH$  may be computed using any
representative $\ba\rel\bb^*$ for any sufficiently large $d$ and any associated recurrence relation $\RR$.

Finally, we mention that besides the extension  $\E$, we also have a \emph{half twist} extension operator $\T$:
\begin{equation}
%(\E\uu)^\alpha_i := \begin{cases}
%     u_i^{\alpha} &	i=0,\ldots,d 	\\
%     u_d^{\alpha} &  i=d+1.
%\end{cases}
	(\T\bb)^\mu := \left\{
	\begin{array}{cl}
		x_j^{\mu} &	j=0,\ldots,d 	\\
		-x_d^{\mu} &  j=d+1 .
	\end{array}\right.
\end{equation}
Every discretized braid can be dualized via the mapping $\{x^\mu_j\} \mapsto \{(-1)^j x_j^\mu\}$. On $\Conf_m^{2d}$ this yields
a well-defined operator $\D\colon\Conf_m^{2d}\to \Conf_m^{2d}$  mapping proper, bounded discretized braid classes $[\ba\rel\bb]$ to proper, bounded discretized braid classes
$[\D\ba\rel\D\bb]$.
From \cite[Cor.\ 31]{BraidConleyIndex} we recall the following result.
Let $\ba\rel\bb\in \Conf_{n,m}^{2d}$ be proper, then
\begin{equation}
\label{eqn:thedual}
\HHH\bigl(\T^2\circ \D(\ba\rel\bb^*)\bigr) = \HHH\bigl(\D(\ba\rel\bb^*)\bigr) \wedge \sbb^{2n},
\end{equation}
where the wedge is the $2n$-suspension of the Conley index.

From the singular homology $H_*(\HHH(\ba\rel\bb^*))$ the Poincar\'e polynomial 
is denoted by $P_t(\ba\rel\bb^*)$, or $P_t\llb\gamma,\aset\rrb$ in terms of the topological type.
%satisfies $P_t\bigl(H_*(\HHH\llbracket\alpha\rel\beta\rrbracket\bigr) = P_t\bigl(H_*(\HHH(\ba\rel\bb^*)\bigr)$ and 
This yields an important invariant: $|P_t(\ba\rel\bb^*)| = |P_t\llb\gamma,\aset\rrb|$, which is the number of monomial term in the Poincar\'e polynomial.

\section{The variational formulation}
For a given symplectomorphism $F\in \Symp(\disc)$
 the problem of finding periodic points can be reformulated in terms of parabolic recurrence relations.
%By establishing a decomposition as proposed in \eqref{eqn:decomp12} we obtain a variational principle for finding periodic point of $F$.

%%%%%%%%%%%%%
%%%%%%%%%%%%%%%

\subsection{Twist symplectomorphisms}

%In \cite{BraidConleyIndex} topological invariants for a class of area-preserving twist mappings of $\plane$ is introduced which
%allows forcing results for such diffeomorphisms.
%The objective of this paper is to apply the invaraints in \cite{BraidFloerHomology} for diffeomorphisms in $\Symp(\disc)$.
%The first hurdle is how to deal with the fact that $F\in\Symp(\disc)$ is not a twist twist diffeomorphism in general.

Let  $F(x,y) = \bigl(f(x,y),g(x,y)\bigr)$ be a symplectomorphism of $\plane$, with $f,g$ smooth functions on $\plane$.
Recall that $F\in \Symp(\plane)$ is a \emph{positive} twist symplectomorphism if
\[
\frac{\partial f(x,y)}{\partial y} >0.
\]
For twist symplectomorphisms there exists a variational principle  for finding periodic points, cf.\ \cite{LeCalvez}, \cite{Moser}.
Such a variational principle also applies to symplectomorphisms that are given as a composition: % of positive twist symplectomorphisms:
\[
F= F_d\circ \cdots \circ F_1,
\]
with $F_j\in \Symp(\plane)$ positive twist symplectomorphisms for all $j$.
It is important to point out that $F$ itself is \emph{not} twist in general.
An important question is whether every mapping $F\in \Symp(\plane)$ can be written  as a composition of (positive) twist symplectomorphisms, cf.\ \cite{LeCalvez}.
Suppose $F\in \Ham(\plane)$,
and $F$ allows a Hamiltonian isotopy $\psi_{t,H}$ with appropriate asymptotic conditions near infinity,
such that $\psi_{t_i,H}\circ\psi_{t_{i-1},H}^{-1}$ is close to the identity mapping in the $C^1$-norm for sufficiently small time steps $t_i-t_{i-1}$. Then, define $G_i = \psi_{t_i,H}\circ\psi_{t_{i-1},H}^{-1}$, $i=1,\cdots,k$,
and $F = G_k\circ \cdots \circ G_1$.
%If the time steps are small enough then the symplectomorphisms $G_i$ are $C^1$-close to the identity mapping.
We remark that in this construction
 the individual mappings $G_i$ are not twist necessarily.
The following observation provides a decomposition consisting solely of positive   twist symplectomorphisms.
Consider the $90^o$ degree clockwise rotation
\[
\psi(x,y) = (y,-x), \quad \psi^4={\rm id},
\]
which is positive   twist symplectomorphism.
This yields the decomposition:
\begin{equation}
\label{eqn:decomp12}
F = (G_k\circ \psi) \circ\psi\circ\psi\circ\psi\circ \cdots \circ (G_1\circ\psi)\circ\psi\circ\psi\circ\psi,
\end{equation}
where $F_{4i} = G_{i}\circ \psi$ and $F_j=\psi$ for $j\not = 4i$ for some $i$ and $d=4k$.
Since the mappings $G_i$ are close to the identity,  the compositions $G_i\circ \psi$ are positive twist symplectomorphisms.
%, which provides the desired decomposition of $F$.
The above   procedure intertwines  symplectomorphisms with $k$ full rotations. As we will see later on this results in
positive braid representations of mapping classes.
The choice of $\psi$ is arbitrary since other rational rotations also yield twist symplectomorphisms.

For  symplectomorphisms $F \in \Symp(\disc)$ we establish a similar decomposition in terms of positive  twist symplectomorphisms, with the additional property that the decomposition can be extended to symplectomorphisms of $\plane$,
which is necessary  to  apply the variational techniques in \cite{BraidConleyIndex}.

\subsection{Interpolation}\label{subsec:interpl}

A symplectomorphism $F\in \Symp(\plane)$ satisfies the \emph{uniform twist condition} if the there exists a $\delta>0$ such that
\begin{equation}
\label{eqn:twist}
\delta^{-1} \ge \frac{\partial f(x,y)}{\partial y} \ge \delta >0,\quad \forall (x,y)\in \plane.
\end{equation}
The subset of such symplectomorphism is denoted by $SV(\plane)$, cf.\ \cite{LeCalvez}.
A result by Moser implies that all  symplectomorphisms of   $\plane$ with a uniform twist condition 
are Hamiltonian.
\begin{prop}[cf.\ \cite{Moser}]\label{MoserThm}
 Let $F \in SV(\plane)$. Then, there exists a Hamiltonian  $H \in \mathcal{H}(\plane)$
 such that $0<\delta \le H_{yy} \le \delta^{-1}$ and
 $\psi_{1,H} = F$, where $\psi_{t,H}$ is  the associated Hamiltonian flow.
All orbits of $\psi_{t,H}$ project to straight lines in $(t,x)$-plane, and $\psi_{t,H}\in SV(\plane)$ for all $t\in (0,1]$.
\end{prop}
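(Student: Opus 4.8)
The plan is to pass to the generating function of $F$, interpolate it by a family of twist maps whose orbits are, \emph{by construction}, straight lines in the $(t,x)$-plane, and then read off the interpolating Hamiltonian and its convexity from this family. First I would record the generating function. Writing $F(x,y)=\bigl(f(x,y),g(x,y)\bigr)$ and $X=f(x,y)$, the uniform twist bound \eqref{eqn:twist} makes $(x,y)\mapsto(x,X)$ a global diffeomorphism, so $y$ is a smooth function of $(x,X)$. Since $F$ is area preserving, the $1$-form $g\,dX-y\,dx$ is closed, hence exact on the simply connected plane, and yields a generating function $h(x,X)$ with $y=-\partial_x h$ and $g=\partial_X h$. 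Differentiating $y=-\partial_x h(x,X)$ in $X$ gives $\partial_{xX}h=-\left(\partial f/\partial y\right)^{-1}$, so \eqref{eqn:twist} is equivalent to the two-sided bound $\partial_{xX}h\in[-\delta^{-1},-\delta]$.

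Next, following Moser, I would build a smooth isotopy $\psi_t$, $t\in[0,1]$, with $\psi_0=\id$ and $\psi_1=F$, whose orbits project to straight lines: the orbit through $(x_0,y_0)$ has $x$-component $x_0+t\,v$ with $v:=f(x_0,y_0)-x_0$ fixed along the orbit. Producing such an interpolation, keeping each $\psi_t$ symplectic, is exactly the content of Moser's variational construction for monotone twist maps. Because the $x$-component of $\psi_t$ is $x_0+t\,v$, the twist of the time-$t$ map is $t\,\partial f/\partial y\in[t\delta,t\delta^{-1}]$, so every $\psi_t$ with $t\in(0,1]$ is again a positive twist map, i.e.\ $\psi_t\in SV(\plane)$; the twist degenerates only as $t\to0$, consistent with $\psi_0=\id$.

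Finally I would extract $H$ and verify its convexity. Since $\plane$ is simply connected and $t\mapsto\psi_t$ is a symplectic isotopy starting at the identity, the infinitesimal generator $\tfrac{d}{dt}\psi_t\circ\psi_t^{-1}$ is Hamiltonian, so $\psi_t=\psi_{t,H}$ for some $H(t,\cdot)\in\mathcal{H}(\plane)$. Hamilton's equation $\dot x=\partial_y H$ together with the straight-line property gives $\partial_y H\bigl(t,\psi_t(x_0,y_0)\bigr)=v$. Regarding $v$ as a function of the time-$t$ point through $(x_0,y_0)=\psi_t^{-1}$ and differentiating in $y$, the relations $\partial_{y_0}v=\partial f/\partial y$, $\partial_{x_0}v=\partial f/\partial x-1$ combined with $\det D\psi_t=1$ produce, after the cross terms cancel, the clean identity $H_{yy}(t,x,y)=\partial f/\partial y$ evaluated at the orbit's initial point. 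Hence $0<\delta\le H_{yy}\le\delta^{-1}$ globally, while the straight-line and twist statements hold by construction.

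The main obstacle is the interpolation step: producing a genuinely smooth (up to $t=0$) symplectic family $\psi_t$ whose orbits are \emph{exactly} straight in $x$ while interpolating the prescribed twist map. This is precisely Moser's theorem, and I would invoke \cite{Moser} for it; once it is in hand, the bound on $H_{yy}$ and the twist of the time-$t$ maps follow from the short computations above.
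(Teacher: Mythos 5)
Your proposal is correct and, at the level of strategy, it is the same Moser interpolation scheme the paper uses; the difference lies in what is proved versus what is cited. You outsource the central step --- the existence of a smooth symplectic isotopy interpolating $F$ whose orbits project to straight lines in the $(t,x)$-plane --- to \cite{Moser}, and then recover the quantitative conclusions by direct computations on the flow. Those computations check out: your generating-function setup with $\partial^2_{xX}\h = -\bigl(\partial_y f\bigr)^{-1} \in [-\delta^{-1},-\delta]$ matches the paper's reduction of \eqref{eqn:twist}; the twist $\partial_{y_0}\,\pi_x\psi_t = t\,\partial_y f(x_0,y_0)$ follows from the straight-line property exactly as the paper reads it off \eqref{eqn:MoserIso}; and your cancellation argument (which does need $\det D\psi_t = 1$, as you say) giving $H_{yy}\bigl(t,\psi_t(z_0)\bigr) = \partial_y f(z_0)$ is valid and in fact slightly sharper than the paper's interval bound --- in the paper this pointwise identity is implicit, since $\partial^2_{pp}L(t,x,p) = -\partial^2_{x_0x_1}\h\bigl(x-pt,\,x+p(1-t)\bigr)$ is constant along the straight-line orbits and $H_{yy}\cdot\partial^2_{pp}L = 1$ by the Legendre transform \eqref{Ltransform}--\eqref{Ltransform2}. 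What your route does not supply is precisely what the paper's proof is there for: the proposition essentially \emph{is} Moser's theorem (hence the header ``cf.\ \cite{Moser}''), so as a standalone argument your appeal to the citation is close to circular; moreover Moser's original statement concerns twist maps in a setting that does not literally cover $SV(\plane)$ with the uniform two-sided bound, which is why the paper redoes the proof ``modulo a few alterations.'' Concretely, the paper constructs the interpolation by solving \eqref{straightEL} for a Lagrangian whose extremals are affine, obtaining the explicit $L_0$ of \eqref{Fzero}, fixing the gauge term via $S_0 - \h = u(x_0)+v(x_1)$ and $m(t,x) = (1-t)u(x) - tv(x)$ so that the extremal action equals the generating function $\h$, and only then passing to $H$; to reach the paper's level of completeness you would need to include that construction (or an equivalent one) rather than cite it, after which your flow-level derivations of the twist and of the $H_{yy}$ bounds are a clean finish.
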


For completeness we give a self-contained proof of Proposition \ref{MoserThm}, which is the same as the proof
 in \cite{Moser} modulo a few alterations.

\begin{proof}
Following \cite{Moser} we consider action
 integral $\int_{0}^{1} L(t,x(t),\dot{x}(t)) dt$
 for functions $x(t)$ with $x(0)=x_0$ and $x(1) = x_1$.
 We require that extremals are affine lines, i.e. $\ddot x(t)=0$. 
For extremals the action is given by $S(x_0,x_1) = \int_{0}^{1} L(t,x(t),\dot{x}(t)) dt$ and we seek
Lagrangians such that $S=\h$, where $\h$ is the generating function for $F$.
 For Lagrangians this implies 
%Let us set $L=L_{0}(t,x,p)+p m_{x}(t,x) + m_{t}(t,x)$. Our Lagrangian will be constructed so to satisfy
%the property $L_{pp} \in - [ \delta, \delta^{-1} ] $. Coupling that with Euler Lagrange equations and setting $\dot{x}(t)=p$ one
%deduces the sufficient and necessary condition for straight line extremals:
%
\begin{equation}
\tfrac{d}{dt}(\partial_pL)-\partial_xL =  ( \partial_{t}+p \partial_{x} )\partial_p L-\partial_x L=0,
%  ( \partial_{t}+p \partial_{x} ) L_{p}-L_{x}=0,
 \label{straightEL}
\end{equation}
where $p=\dot x$. Solving the first order partial differential equation yields
 $L=L_{0}(t,x,p)+p \partial_x m + \partial_t m$,
%\begin{equation}
% L=L_{0}(t,x,p)+p m_{x}(t,x) + m_{t}(t,x),
%\end{equation}
%%
with
\begin{equation}
 L_{0}:=-\int_{0}^{p}(p-p')\partial^2_{x_0 x_1}\h(x-p't,x+p'(1-t)) dp' .
 \label{Fzero}
\end{equation}
and $m= m(t,x)$ to be specified later, cf.\ see \cite{Moser} for details.
%After simple calculation one verifies that both $L$ and $L_{0}$ satisfy \eqref{straightEL}.
%
The extremals $x(t)$ are also extremals for $L_0$. Let $S_{0}(x_{0},x_{1}) = \int_{0}^{1} L_0(t,x(t),\dot{x}(t)) dt$, then
\begin{equation}
 \int_{0}^{1}p \partial_x m(t,x(t))+\partial_t m(t,x(t)) dt = m(1,x_{1})-m(0,x_{0})
\end{equation}
and hence 
%extremals $x(t)$ connecting $x_{0}$ and $x_{1}$ for $L_{0}$ extremize $L$ as well and
 %
\begin{equation}
 S(x_{0},x_{1})=S_{0}(x_{0},x_{1})+m(1,x_{1})-m(0,x_{0}).
\end{equation}
Differentiating $S$ yields
%Calculating variations with respect to functions allowed to vary at $t=0$ and keeping $t=1$ value fixed at zero allows us to calculate first differential with respect to $x_{0}$:
%
\begin{equation*}
 %\frac{\partial S_{}}{\partial x_{0}}
 \partial_{x_0} S = -\partial_p L(0,x_{0},x_{1}-x_{0}),\quad
 % \frac{\partial S_{}}{\partial x_{1}}
  \partial_{x_1} S = \partial_p L(0,x_{1},x_{1}-x_{0})
 \label{Sdiff}
\end{equation*}
and for the mixed derivate%which upon differentiation with respect to $x_{1}$ yields:
\begin{equation}
% \frac{\partial^{2} S_{0}}{\partial x_{0} \partial x_{1}}(x_{0},x_{1})
 \partial^2_{x_0 x_1} S_0(x_0,x_1)=-\partial^2_{pp}L(0,x_{0},x_{1}-x_{0})=\partial^2_{x_{0} x_{1}}\h(x_{0},x_{1}).
 \label{FppRel}
\end{equation}
Then, $S_{0}(x_{0},x_{1})-h(x_{0},x_{1})=u(x_{0})+v(x_{1})$ and the choice
\begin{equation*}
 m(t,x):=(1-t)u(x)-tv(x)
% \label{}
\end{equation*}
implies $S=\h$. Differentiating the relation $y=-\partial_x \h(x,x_{1})$ with respect to $y$ and using the fact that
$x_1 = f(x,y)$, yields
\begin{equation*}
\begin{aligned}
1&= -\partial^2_{y x}\h (x,x_1) =  - \partial^2_{x x_{1}}\h (x,x_{1})\partial_yf(x, x_1)\\
&= -\partial^2_{x x_{1}}\h(x,x_{1})\partial_y f(x, x_{1})
 \end{aligned}
\end{equation*}
and thus $\delta\le \partial_y f \le \delta^{-1}$   if and only if $-\delta^{-1} \leq \partial^2_{x x_{1}}\h \leq -\delta$.
By relation \eqref{FppRel} we have $\partial^2_{pp}L \in [ \delta, \delta^{-1} ]$.

The Hamiltonian is obtained via the Legendre transform
\begin{equation}
 H(t,x,y):=yp-L(t,x,p),
 \label{Ltransform}
\end{equation}
where 
\begin{equation}
 y=\partial_p L(t,x,p),
 \label{Ltransform2}
\end{equation}
and we can solve for $p$, i.e. $p = \lambda(x,y)$. As before, differentiating \eqref{Ltransform2} gives
$1=\partial^2_{pp}L\cdot  \partial_y \lambda$ and differentiating \eqref{Ltransform} gives $\partial_yH = \lambda$. Combining these two identities yields $\partial^2_{pp}L\cdot \partial^2_{yy}H=1$, from which the desired property $\partial^2_{yy}H \in [ \delta, \delta^{-1} ]$ follows.

From the above analysis we obtain the following expression for the isotopy $\psi_{t,H}$:
\begin{equation}
\label{eqn:MoserIso}
\psi_{t,H}(x,y) = \Bigl(x+\lambda(x,y)t,\partial_pL\bigl(t,x+\lambda(x,y)t,\lambda(x,y)\bigr)  \Bigr).
\end{equation}
Let $\pi_x$ denote the projection onto the $x$-coordinate.~Then, $\partial_y \pi_x \psi_{t,H}(x,y) =\partial_y \lambda(x,y) t = \partial^2_{yy}H t$, which proves that
%
%Recall that the Euler-Lagrange equation for the extremals of $L$ are  given by $\ddot{x}(t) = 0$
%on $t\in [0,1]$, with boundary conditions $x(0)=x_0$ and $x(1) = x_1$, which implies $x(t) = x_0 + (x_1-x_0)t$. Moreover, $y(t) = \partial_{\dot{x}} L(t,x(t),x_1-x_0)$.
%In order the compute $\frac{\partial x_1}{\partial y_0}$ we consider the linearization $\ddot{v}(t) = 0$, with $v(0) = 0$ and $\dot{v}(0) = 1$.
%Then, $\frac{\partial x_1}{\partial y_0} = t$ and in particular $\frac{\partial x_1}{\partial y_0} = v(1) = 1$,
%which shows that 
$\psi_{t,H}$ is positive twist for all $t\in (0,1]$.
\end{proof}

Using Proposition \ref{MoserThm} we obtain
a decomposition of symplectomorphisms $F\in \Symp(\plane)$ as given in \eqref{eqn:decomp12}
and which satisfy additional properties such that the discrete braid invariants in \cite{BraidConleyIndex} are applicable.

\begin{prop}\label{Interpolation2} 
Let  $F \in \Symp(\disc)$. Then, there exists  an  isotopy $\phi_{t} \subset \Symp(\plane)$ for all $t\in [0,1]$, an integer $d\in \nn$ and 
 a sequence $\{t_{j} \}_{j=0}^{d} \subset [0,1]$ with $t_j = j/d$, %$t_{0}=0,\ t_{d}=1$
  such that
  \begin{enumerate}
 \item[(i)] $\phi_{0}= \id$, $\phi_{1}|_{\disc}=F$;
 \item[(ii)] $\phi_{t}$ is smooth with respect to $t$ on the intervals $[t_j,t_{j+1}]$ (piecewise smooth);
 \item[(iii)] $\widehat F_j :=\phi_{t_{j}} \circ \phi_{t_{j-1}}^{-1} \in SV(\plane)$ for all $1\le j \le d$, and $F_j:=\widehat F_j|_{\disc}$;
 \item[(iv)] the projection of the graph of $\phi_{t}(x,y)$ onto $(t,x)$-plane is linear on the intervals $t \in (t_{j-1},t_{j})$ for all $1\le j \le d$, and for all $(x,y) \in \plane$;
 \item[(v)] $\phi_{t} (\disc) \subset [-1,1] \times \rr $ for all $t\in [0,1]$;
 \item[(vi)] the points $z_\pm = (\pm 2,0)$  
 %fixed points of $\widehat F_j\circ\cdots\circ \widehat F_{j-\ell_k+1}$, for all $j=\ell_k,2\ell_k,\cdots,d\ell_k$;
 are fixed points of $\widehat F_j = \phi_{t_{j}} \circ \phi_{t_{j-1}}^{-1}$ for all $1\le j \le d$;
  \item[(vii)] the points $z'_\pm = (\pm 4,0)$  are period-2 points of $\widehat F_j = \phi_{t_{j}} \circ \phi_{t_{j-1}}^{-1}$ for all $1\le j \le d$,
  i.e. $\widehat F_j(z'_\pm) = z'_\mp = -z'_\pm$, for all $j$.
 \end{enumerate}
The decomposition
\begin{equation}
\label{eqn:decomp14}
\widehat F = \widehat F_d\circ \cdots \circ \widehat F_1,
\end{equation}
 is a generalization of the decomposition given in \eqref{eqn:decomp12}.
\end{prop}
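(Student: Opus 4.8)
The plan is to promote $F$ to a Hamiltonian isotopy of $\plane$ that confines $\disc$ to the strip $[-1,1]\times\rr$ and carries the prescribed skeleton outside it, and then to cut this isotopy into uniform twist increments by interleaving it with a fixed rotation-type twist map, exactly as in \eqref{eqn:decomp12}; Proposition \ref{MoserThm} is applied increment by increment to turn each twist map into a sub-isotopy with the straight-line structure demanded by (ii) and (iv).

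First I would use the equivalence $\Symp(\disc)=\Ham(\disc)$ to write $F=\psi_{1,H}|_{\disc}$ for some $H\in\HH(\disc)$. Since $H(t,\cdot)$ is constant on $\partial\disc$, the flow $\psi_{t,H}$ preserves $\disc$, so $\psi_{t,H}(\disc)=\disc\subset[-1,1]\times\rr$, already anticipating (v). I then extend $H$ to a Hamiltonian $\widehat H$ on $\plane$ that equals $H$ on a neighbourhood of $\disc$ and is cut off to the zero Hamiltonian outside the strip $\{|x|<3/2\}$, so that the resulting plane isotopy $\Psi_t$ preserves $\disc$, satisfies $\Psi_0=\id$ and $\Psi_1|_{\disc}=F$, and is the identity near the skeleton points $z_\pm,z'_\pm$.

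Next I realize the twist decomposition. Fix $d=4k$ with $k$ large and $t_j=j/d$; for large $k$ the raw increments $\Psi_{t_{j}}\circ\Psi_{t_{j-1}}^{-1}$ are $C^1$-close to the identity but are not twist. Following \eqref{eqn:decomp12}, I interleave them with a rotation-type twist symplectomorphism $R$ that agrees with $\psi(x,y)=(y,-x)$ on a neighbourhood of $\disc$ (so $R(\disc)=\disc$ and $R^4|_{\disc}=\id$), obeys the uniform twist bound \eqref{eqn:twist} on all of $\plane$, fixes $z_\pm$, and interchanges $z'_\pm$. Setting $\widehat F_{4i}=G_i\circ R$ and $\widehat F_j=R$ otherwise, where $G_i=\Psi_{t_{4i}}\circ\Psi_{t_{4i-4}}^{-1}$, the relation $R^4|_{\disc}=\id$ makes the $R$'s telescope on $\disc$, so $\widehat F_d\circ\cdots\circ\widehat F_1|_{\disc}=G_k\circ\cdots\circ G_1|_{\disc}=\Psi_1|_{\disc}=F$. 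The $C^1$-smallness of the $G_i$ against the strict twist of $R$ forces each $\widehat F_j\in SV(\plane)$, which is (iii); since $G_i$ is the identity near the skeleton, each $\widehat F_j$ acts there as $R$, yielding (vi) and (vii); and each $\widehat F_j$ preserves $\disc$, so every partial composition does too.

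Finally I apply Proposition \ref{MoserThm} to each $\widehat F_j\in SV(\plane)$ to obtain a Hamiltonian sub-isotopy from $\id$ to $\widehat F_j$, lying in $SV(\plane)$, whose orbits project to straight lines in the $(t,x)$-plane, cf.\ \eqref{eqn:MoserIso}; reparametrizing each to $[t_{j-1},t_j]$ and post-composing with $\widehat F_{j-1}\circ\cdots\circ\widehat F_1$ produces $\phi_t$. Properties (i)--(iv) are then immediate, and (v) follows because on each subinterval every point of $\disc$ travels with $x$-coordinate affine in $t$ (the straight-line property), its values at $t_{j-1}$ and $t_j$ lying in $\phi_{t_{j-1}}(\disc)=\disc$ and $\phi_{t_j}(\disc)=\disc$ respectively, hence in $[-1,1]$, so the $x$-coordinate stays in $[-1,1]$ throughout. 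The main obstacle is the construction of $R$ in the third step: one must produce a single symplectomorphism obeying the global differential inequality \eqref{eqn:twist} everywhere on $\plane$ while simultaneously matching the rotation $\psi$ near $\disc$ and enacting the rigid pointwise skeleton (fixing $z_\pm$, swapping $z'_\pm$) far from $\disc$. Interpolating between these incompatible-looking local models across the intermediate region without destroying positivity of $\partial_y f$ is the heart of the argument.
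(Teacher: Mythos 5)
Your scaffolding coincides with the paper's proof almost step for step: cut off a Hamiltonian extension of $F$ near $\disc$, subdivide the resulting plane isotopy into $C^1$-small increments $G_i$, interleave these with a globally uniform twist ``rotation'' so that every factor lies in $SV(\plane)$, apply Proposition \ref{MoserThm} factor by factor and reparametrize to get the piecewise straight-line isotopy of (ii) and (iv), then deduce (v) from (iv) together with the invariance of $\disc$ under each $\widehat F_j$, and (vi)--(vii) from the skeleton behaviour of the rotation, since the $G_i$ are the identity near $z_\pm$ and $z'_\pm$. The telescoping with $R^4|_{\disc}=\id$ is also sound. But you have not proved the proposition: you explicitly leave open the existence of $R$ --- a single symplectomorphism satisfying \eqref{eqn:twist} on all of $\plane$, rotating on $\disc$, fixing $z_\pm$ and swapping $z'_\pm$ --- and flag it as the ``heart of the argument.'' That existence statement is exactly the paper's Lemma \ref{AlternatePsi}, and without it your third and fourth steps have nothing to stand on, so the proposal has a genuine gap at the one place where the proof is not routine.

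The gap closes immediately once one works with generating functions rather than with the maps themselves: the twist condition is equivalent to $\partial_1\partial_2\h<0$ for the generating function $\h(x,x')$, and the paper takes $\h_\Psi(x,x')=\xi_\ell(x)-\csc(\vartheta_\ell)\,xx'+\xi_\ell(x')$, whose mixed derivative is $-\csc(\vartheta_\ell)<0$ \emph{independently of} $\xi_\ell$. The three ``incompatible-looking'' local models differ only in the diagonal term: $\xi_\ell(x)=\frac12\cot(\vartheta_\ell)x^2$ for $|x|\le1$ reproduces the rotation \eqref{eqn:rot12} on $\disc$; $\xi_\ell(x)=\frac12\csc(\vartheta_\ell)x^2$ for $3/2\le|x|\le5/2$ pins $z_\pm$; and $\xi_\ell(x)=-\frac12\csc(\vartheta_\ell)x^2$ for $7/2\le|x|\le9/2$ interchanges $z'_\pm$; any smooth interpolation of $\xi_\ell$ between these regions leaves the uniform twist bound untouched because only the cross term enters it. Two smaller discrepancies with the paper are worth noting. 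First, the paper interleaves with rotations by two angles $2\pi/\ell$ and $2\pi/(\kappa\ell)$, with $\ell_\kappa=\kappa(\ell-1)+1$ factors per block, rather than your fixed quarter-turn with $d=4k$; your choice suffices for Proposition \ref{Interpolation2} as stated, but the free parameters $\ell$ and $\kappa$ (together with the extensions of Remark \ref{rem:extendedMI}) are what make $d$ adjustable in Cases I--III of the proof of Theorem \ref{thm:main1}, so hard-wiring $\ell=4$ would cost you later. Second, cut the extended Hamiltonian off on the bounded neighbourhood $\U_\epsilon(\disc)$, $\epsilon\in(0,1/2)$, rather than on the unbounded strip $\{|x|<3/2\}$: the openness of $SV(\plane)$ is with respect to $C^1$-convergence on compact sets, and one wants $G_i\in\Symp_{c}(\plane)$ in order to conclude $G_i\circ\Psi\in SV(\plane)$ from closeness to the identity.
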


The isotopy constructed in Proposition \ref{Interpolation2} is called a \emph{chained Moser isotopy}.
Before proving Proposition \ref{Interpolation2} we construct  analogues of the rotation mapping used in \eqref{eqn:decomp12}.
%We construct a class of positive twist symplectomorphisms of the 2-disc which are used to added positive full twist to
%the a given symplectomorphism.

\begin{lem}\label{AlternatePsi}  
For every integer $\ell\ge 3$ there exists  
 a positive  Hamiltonian twist diffeomorphism $\Psi$ of the plane $\plane$,
such that: 
\begin{enumerate}
\item[(i)] the restriction $\Psi|_{\disc}$  is a rotation   over angle $2\pi/\ell$ and $\Psi|_{\disc}^\ell = \id$;
\item[(ii)] the points $z_\pm=(\pm 2,0)$ are fixed points for $\Psi$;
\item[(iii)] the points $z'_\pm=(\pm 4,0)$ are period-2 points for $\Psi$, i.e. $\Psi(z'_\pm) = z'_\mp$.
\end{enumerate}
\end{lem}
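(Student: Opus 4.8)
The plan is to build $\Psi$ from a generating function, exploiting the correspondence established inside the proof of Proposition \ref{MoserThm}: a symplectomorphism of $\plane$ with generating function $\h(x,x_1)$ is a \emph{positive} twist map precisely when $\partial^2_{x x_1}\h<0$, and if in addition $-\delta^{-1}\le \partial^2_{x x_1}\h\le -\delta<0$ then the map lies in $SV(\plane)$ and is therefore Hamiltonian. Thus it suffices to exhibit one generating function $\h$ with uniformly negative mixed derivative realizing (i)--(iii); the Hamiltonian assertion then follows for free from Proposition \ref{MoserThm}.

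First I would record the local models. Writing $\alpha=2\pi/\ell$, the hypothesis $\ell\ge 3$ gives $\alpha\in(0,2\pi/3]\subset(0,\pi)$, so $\sin\alpha>0$ and the clockwise rotation $R_\alpha$ over $\alpha$ (the orientation for which $\psi$ in \eqref{eqn:decomp12} is positive twist) is a positive twist map, with generating function $\h_\alpha(x,x_1)=\frac{(x^2+x_1^2)\cos\alpha-2xx_1}{2\sin\alpha}$ and $\partial^2_{x x_1}\h_\alpha=-1/\sin\alpha<0$. Setting $\Psi|_{\disc}=R_\alpha$ immediately yields (i), since $R_\alpha^\ell$ is the rotation over $2\pi$, i.e.\ $\id$. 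For the auxiliary points I note that, through $y=-\partial_x\h$ and $y_1=\partial_{x_1}\h$, the conditions ``$z_\pm$ fixed'' and ``$z'_\pm\mapsto z'_\mp$'' translate into $\nabla\h=0$ at the three points $(2,2)$, $(4,-4)$ and $(-4,4)$; near each such point $(x^\ast,x_1^\ast)$ a quadratic model $c+\tfrac a2(x-x^\ast)^2+\tfrac a2(x_1-x_1^\ast)^2-b(x-x^\ast)(x_1-x_1^\ast)$ with $b>0$ has vanishing gradient there and mixed derivative $-b<0$.

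Now I would assemble $\Psi$ globally. Since $R_\alpha$ preserves $\disc$, in the $(x,x_1)$-plane the disc corresponds to the compact ellipse $E=\{x^2+(x_1-x\cos\alpha)^2/\sin^2\alpha\le 1\}$, on which I set $\h=\h_\alpha$; crucially $E\subset\{|x|\le 1\}$, hence $E$ is disjoint from the three prescribed points (which have $|x|\in\{2,4\}$) and these are pairwise far apart. I then interpolate, over the complementary region, between $\h_\alpha$, the three quadratic models, and a quadratic background at infinity (e.g.\ $-c_0xx_1$) chosen so that the induced map is proper and bijective. The main obstacle is to carry out this interpolation while keeping $\partial^2_{x x_1}\h$ strictly negative and uniformly bounded everywhere: a naive partition of unity fails, because the derivatives of the cut-off functions contribute uncontrolled terms to the mixed second derivative. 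I would overcome this by using that the four model regions are pairwise far apart in the $(x,x_1)$-plane, so that on each transition annulus only two models interact and the cut-off can be taken to depend on a single variable (or the models arranged $C^2$-close), keeping the cross term within $[-\delta^{-1},-\delta]$; this is the technical heart of the argument.

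Finally, with such an $\h$ the induced $\Psi$ is a global diffeomorphism of $\plane$ lying in $SV(\plane)$ and realizing (i)--(iii) by construction, and Proposition \ref{MoserThm} then supplies a Hamiltonian $H\in\HH(\plane)$ with $\psi_{1,H}=\Psi$, which completes the proof.
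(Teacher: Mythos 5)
Your frame --- generating functions, the rotation model on the disc, encoding (ii)--(iii) as $\nabla\h=0$ at prescribed points, and Proposition \ref{MoserThm} for the Hamiltonian claim --- matches the paper, but the step you yourself call the technical heart, namely gluing the local models while keeping $\partial^2_{xx'}\h$ pinned in $[-\delta^{-1},-\delta]$, is exactly where the proposal does not go through as written. A cutoff in a single variable does not kill the dangerous term: for $\h=\chi(x)\h_1+(1-\chi(x))\h_2$ one gets
\[
\partial^2_{xx'}\h=\chi\,\partial^2_{xx'}\h_1+(1-\chi)\,\partial^2_{xx'}\h_2+\chi'(x)\,\partial_{x'}(\h_1-\h_2),
\]
and $\partial_{x'}(\h_1-\h_2)$ grows linearly whenever the cross-coefficients of the two models differ --- your rotation model has $-\csc(\vartheta_\ell)$, your local quadratics an arbitrary $-b$, your background $-c_0$. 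Nor is ``pairwise far apart'' available: the disc region $|x|\le 1$ and the fixed points at $x=\pm 2$ leave only a gap of width $1/2$ (the paper interpolates on $1<|x|<3/2$), and the models there are not $C^2$-close (diagonal coefficient $\cot(\vartheta_\ell)$ versus $\csc(\vartheta_\ell)$). So neither of your two suggested remedies controls the mixed derivative.

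The paper dissolves this problem rather than solving it: all local models are chosen with the \emph{same} cross term, $\h_\Psi(x,x')=\xi_\ell(x)-\csc(\vartheta_\ell)\,xx'+\xi_\ell(x')$, so all region-dependence is pushed into the single-variable function $\xi_\ell$ (equal to $\frac12\cot(\vartheta_\ell)x^2$ on $|x|\le 1$, to $\frac12\csc(\vartheta_\ell)x^2$ on $3/2\le|x|\le 5/2$, and to $-\frac12\csc(\vartheta_\ell)x^2$ on $7/2\le|x|\le 9/2$). Separable terms contribute nothing to $\partial^2_{xx'}$, so the mixed derivative is identically $-\csc(\vartheta_\ell)$, the uniform twist condition holds for \emph{any} smooth interpolation of $\xi_\ell$ whatsoever, and $y=-\partial_1\h_\Psi$ is explicitly and globally solvable for $x'$, so the induced map is automatically a diffeomorphism of $\plane$ in $SV(\plane)$. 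Your construction can be salvaged by the same observation: set $b=c_0=\csc(\vartheta_\ell)$, after which $\h_1-\h_2$ is separable and your single-variable cutoff becomes harmless --- but that choice is precisely the one idea the proposal is missing, and it must be made explicitly. (A smaller slip: the conditions require $\nabla\h=0$ at \emph{four} points, $(2,2)$, $(-2,-2)$, $(4,-4)$, $(-4,4)$; you listed three, omitting $(-2,-2)$.)
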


\begin{proof}
A linear rotation mapping on $\plane$  is a positive twist mapping for all rotation angles $\vartheta\in (0,\pi)$.
The generating function for a rotation is given by
\begin{equation}
\label{eqn:rot12}
\h_\vartheta(x,x') = \tfrac{1}{2}\cot(\vartheta) x^2 - \csc(\vartheta) x x' + \tfrac{1}{2}\cot(\vartheta) x'^2.
\end{equation}
In order to construct the mappings $\Psi$ we construct special generating functions.
Let $\ell\ge 3$ be an integer and let $\vartheta_\ell = 2\pi/\ell \in (0,\pi)$.
Consider generating functions of the form
\begin{equation}
%\label{eqn:rot14}
\h_\Psi(x,x') = \xi_\ell(x) - \csc(\vartheta_\ell) x x' + \xi_\ell(x'),
\end{equation}
which generate positive twist mappings for all $\ell\ge 3$.
We choose $\xi_\ell$ as follows: $\xi_\ell(x) = \frac{1}{2}\cot(\vartheta_\ell) x^2$ for all $|x|\le 1$, $\xi_\ell(x) = \frac{1}{2}\csc(\vartheta_\ell) x^2$ for all $3/2\le |x|\le 5/2$, and  $\xi_\ell(x) = -\frac{1}{2}\csc(\vartheta_\ell) x^2$ for all $7/2\le |x|\le 9/2$.
The mapping $\Psi$ is defined by $h_\Psi$ and 
$y = -\partial_1 \h_\Psi(x,x')$ and $y' = \partial_2 \h_\Psi(x,x')$.\footnote{To simplify notation we express the derivatives of $\h$ with respect to its two  coordinates 
  by $\partial_1\h$ and $\partial_2\h$.} 
For $|x|,|x'|\le 1$, the generating function restricts to \eqref{eqn:rot12} which yields the rotation over $\vartheta_\ell$ on $\disc$ and  establishes (i).
For $3/2\le x,x'\le 5/2$ we have 
\[
y = \csc(\vartheta_\ell)(x'-x),\quad\hbox{and}\quad y' =  \csc(\vartheta_\ell)(x'-x),
\]
which verifies that $z_+$ are fixed points and same holds for $z_-$, completing the verification of (ii).
For $7/2\le x \le 9/2$ and $-9/2\le x'\le -7/2$, we have 
\[
y = \csc(\vartheta_\ell)(x'+x),\quad\hbox{and}\quad y' =  -\csc(\vartheta_\ell)(x'+x),
\]
then $z'_+$ is mapped to $z'_-$ and similarly $z'_-$ is mapped to $z'_+$, which completes (iii) and proof of the lemma.
\end{proof}

%\begin{lem}\label{AlternatePsi2}  
%For every integer $\ell\ge 3$  and integer $\kappa\ge 1$ there exists  
% a positive  Hamiltonian twist symplectomorphism $\Phi$ of the plane $\plane$,
%such that: 
%\begin{enumerate}
%\item[(i)] the restriction $\Phi|_{\disc}$  is a rotation   over angle $2\pi/\kappa\ell$, i.e. $\Phi|_{\disc}^{\kappa\ell} = \id$;
%\item[(ii)] the points $z_\pm=(\pm 2,0)$ are fixed points for $\Phi$;
%\item[(iii)] the points $z'_\pm=(\pm 4,0)$ are period-2 points for $\Phi$, i.e. $\Phi(z'_\pm) = z'_\mp$.
%\end{enumerate}
%\end{lem}
%
%\begin{proof}
%The proof is similar to the proof of Lemma \ref{AlternatePsi}.
%We consider the generating functions
%\begin{equation}
%\label{eqn:rot16}
%\h_\Phi(x,x') = \xi_{\ell,\kappa}(x) - \csc(\vartheta_{\kappa\ell}) x x' + \xi_{\ell,\kappa}(x'),
%\end{equation}
%where $\vartheta_{\kappa\ell} = 2\pi/\kappa\ell$. The functions are same as in the proof of Lemma \ref{AlternatePsi}, with $\vartheta_\ell$ replaced
%by $\vartheta_{\kappa\ell}$.
%\end{proof}

In order to extend chained Moser isotopies yet another type of Hamiltonian twist diffeomorphism is needed.

\begin{lem}\label{AlternatePsi3}  
For every  integer $\ell\ge 3$  there exists  
 a positive  Hamiltonian twist symplectomorphism $\Upsilon$ of the plane $\plane$,
such that: 
\begin{enumerate}
\item[(i)] the restriction $\Upsilon|_{\disc}$  is a rotation   over angle $2\pi/\ell$, i.e. $\Upsilon|_{\disc}^{\ell} = \id$;
%\item[(ii)] the points $z_\pm=(\pm 2,0)$ are fixed points for $\Phi$;
\item[(ii)] the points $z_\pm=(\pm 2,0)$ and $z'_\pm=(\pm 4,0)$ are period-2 points for $\Upsilon$, i.e. $\Upsilon(z'_\pm) = z'_\mp$.
\end{enumerate}
\end{lem}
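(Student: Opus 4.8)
The plan is to follow the proof of Lemma~\ref{AlternatePsi} almost verbatim, altering only the prescription on the middle annular band so that the points $z_\pm$ become \emph{swapped} rather than fixed. Concretely, with $\vartheta_\ell = 2\pi/\ell \in (0,\pi)$ I would seek a generating function of the same separated form
\[
\h_\Upsilon(x,x') = \eta_\ell(x) - \csc(\vartheta_\ell)\, x x' + \eta_\ell(x'),
\]
and define $\Upsilon$ by $y = -\partial_1 \h_\Upsilon(x,x')$ and $y' = \partial_2 \h_\Upsilon(x,x')$. The single-variable profile $\eta_\ell$ would be chosen as $\eta_\ell(x) = \tfrac12 \cot(\vartheta_\ell) x^2$ for $|x|\le 1$ (to recover the rotation on $\disc$), and $\eta_\ell(x) = -\tfrac12 \csc(\vartheta_\ell) x^2$ on \emph{both} bands $3/2\le |x|\le 5/2$ and $7/2\le |x|\le 9/2$, with an arbitrary smooth interpolation in between and outside. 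The only change relative to Lemma~\ref{AlternatePsi} is the sign of $\eta_\ell$ on the inner band $3/2\le|x|\le 5/2$, and this sign flip is exactly what turns the fixed points $z_\pm$ into period-2 points.

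Then I would check the conclusions in turn. Because $\eta_\ell$ depends on a single variable, the cross partial is $\partial^2_{xx'}\h_\Upsilon = -\csc(\vartheta_\ell) < 0$ everywhere, so $\Upsilon$ is a positive twist map; writing $\Upsilon = (f,g)$ one gets $f(x,y) = \sin(\vartheta_\ell)\bigl(y + \eta_\ell'(x)\bigr)$, whence $\partial_y f = \sin(\vartheta_\ell)$ is a positive constant, so $\Upsilon \in SV(\plane)$ and Proposition~\ref{MoserThm} promotes it to a Hamiltonian twist symplectomorphism. For (i), on $|x|,|x'|\le 1$ the generating function coincides with $\h_{\vartheta_\ell}$ from \eqref{eqn:rot12}; since the linear rotation preserves $\disc$, for $(x,y)\in\disc$ both $x$ and the first coordinate of the image lie in $[-1,1]$, where $\eta_\ell$ carries the rotation quadratic, so $\Upsilon|_{\disc}$ is the rotation over $2\pi/\ell$ and $\Upsilon|_{\disc}^\ell = \id$. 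For (ii), on each band where $\eta_\ell(x) = -\tfrac12\csc(\vartheta_\ell)x^2$ one has $\eta_\ell'(x) = -\csc(\vartheta_\ell)x$, which gives $y = \csc(\vartheta_\ell)(x+x')$ and $y' = -\csc(\vartheta_\ell)(x+x')$; imposing $y=0$ forces $x'=-x$ and then $y'=0$, so $\Upsilon(\pm 2,0) = (\mp 2,0)$ and $\Upsilon(\pm 4,0) = (\mp 4,0)$.

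The computation is routine once the profile is fixed, so the only point that genuinely requires care is the \emph{consistency of the banded definition}: the image $x'=-x$ must again lie in a band carrying $-\tfrac12\csc(\vartheta_\ell)x^2$ for the derivative formula used above to be legitimate. This holds because both bands $3/2\le|x|\le5/2$ and $7/2\le|x|\le9/2$ are symmetric under $x\mapsto-x$, so $x=2\mapsto x'=-2$ and $x=4\mapsto x'=-4$ each stay inside their respective bands. The smooth gluing across the transition intervals $[1,3/2]$, $[5/2,7/2]$ and beyond $9/2$ imposes no further constraint beyond smoothness, since the twist estimate $\partial^2_{xx'}\h_\Upsilon = -\csc(\vartheta_\ell)$ is insensitive to the choice of $\eta_\ell$; this is precisely the feature of the separated ansatz that removes what would otherwise be the main obstacle.
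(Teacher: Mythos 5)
Your proposal is correct and takes essentially the same route as the paper: the paper's proof uses the identical separated generating function $\h_\Upsilon(x,x') = \xi_\ell(x) - \csc(\vartheta_\ell)\, x x' + \xi_\ell(x')$ with $\xi_\ell(x) = \tfrac12\cot(\vartheta_\ell)x^2$ on $|x|\le 1$ and $\xi_\ell(x) = -\tfrac12\csc(\vartheta_\ell)x^2$ on the single band $3/2\le |x|\le 9/2$ (your two symmetric bands carrying the same quadratic are an inessential variant), and verifies (i) and (ii) via the same computations $y=\csc(\vartheta_\ell)(x+x')$, $y'=-\csc(\vartheta_\ell)(x+x')$. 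Your additional checks --- the explicit uniform twist bound $\partial_y f = \sin(\vartheta_\ell)$ placing $\Upsilon$ in $SV(\plane)$ so that Proposition~\ref{MoserThm} yields the Hamiltonian property, and the symmetry of the bands under $x\mapsto -x$ ensuring the image point lies where the quadratic formula is valid --- are points the paper leaves implicit, so your write-up is if anything slightly more careful.
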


\begin{proof}
As before consider
 generating functions of the form
\begin{equation}
\label{eqn:rot14}
\h_\Upsilon(x,x') = \xi_\ell(x) - \csc(\vartheta_\ell) x x' + \xi_\ell(x'),
\end{equation}
which generate positive twist mappings for all   $\ell\ge 3$.
We choose $\xi_\ell$ as follows: $\xi_\ell(x) = \frac{1}{2}\cot(\vartheta_\ell) x^2$ for all $|x|\le 1$,  and  $\xi_\ell(x) = -\frac{1}{2}\csc(\vartheta_\ell) x^2$ for all $3/2\le |x|\le 9/2$.
The mapping $\Upsilon$ is defined by $h_\Upsilon$.
For $|x|,|x'|\le 1$, the generating function restricts to \eqref{eqn:rot12} which yields the rotation over $\vartheta_\ell$ on $\disc$ and  establishes (i).
For $3/2\le x,\le 9/2$ and $-9/2\le x'\le -7/2$, we have 
\[
y = \csc(\vartheta_\ell)(x'+x),\quad\hbox{and}\quad y' =  -\csc(\vartheta_\ell)(x'+x),
\]
then $z_+$ and $z'_+$ are mapped to $z_-$ and $z'_-$ respectively and similarly $z_-$ and $z'_-$ are mapped to $z_+$ and $z'_+$ respectively, which completes proof.
\end{proof}

\begin{proof}[Proof of Proposition \ref{Interpolation2}]
%Consider an autonomous Hamiltonian 
%  $H_{L} \in C^{\infty}(\plane)$ satisfying the following properties:
%\begin{enumerate} 
%   \item[(h1)] $H_{L}(x,y) = 1/2 (x^{2}+y^{2})$ for $(x,y) \in \disc$;
%   \item[(h2)] there exists $s_{0}>0$ such that  $\zeta_{s, H_{L}} \in SV(\plane)$ for all $s \in (0,s_{0})$, and where $\zeta_{t,H_L}$ is the associated Hamiltonian flow on $\plane$;
%   \item[(h3)] $\nabla H_L (2,0) = \nabla H_L (-2,0) = 0$.
%\end{enumerate}    
%   An explicit construction of such a Hamiltonian is given in the Appendix \ref{App:A}.
% Choose an integer $m\in \nn$ such that so that $2 \pi / m< s_{0}$ and define
% $\Psi = \zeta_{2\pi/m,H_L}$, which implies that $\Psi|_{\disc}^m= \id$.
%
%Define an auxiliary function $\Psi(x,y):=\widetilde{\phi}_{2 \pi / M, H_{P}}$ where $\widetilde{\phi}_{t,H_{P}}$ is the solution
%operator to Hamilton equations given by the Hamiltonian $H_{P}(x,y)= 1/2 (x^{2}+y^{2}),\ (x,y) \in \plane$ and $M$ is a natural number
%greater than 2. The action of $H_{P}$ is a rotation around the origin and for example for $M=4$ we have $\Psi(x,y)=(y,-x)$.
%Observe that $\Psi^{M}=\id$ an $\Psi \in SV(\plane)$.
%
Consider the  subgroup $\Symp_{c}(\plane)$ formed by compactly supported symplectomorphisms of the plane.\footnote{A symplectomorphism is compactly supported in $\plane$ if it is the identity outside a compact subset of $\plane$.%By compactly supported symplectomorphism $F$ we mean a symplectomorphism of form $F=\id + G$, with $G$ being a compactly supported map.
}
 Recall that due to the uniform twist property the set $SV(\plane)$ is open in the topology given by $C^{1}$-convergence on compact sets, cf.\ \cite{LeCalvez}. 
 Let $\Psi\in \Ham(\plane)$ be given by Lemma \ref{AlternatePsi} for some $\ell\ge 3$.
% $\ell=\lambda\mu\ge 3$,
% $\lambda,\mu\ge 1$.
 Then, there exists
an open neighborhood $\W \subset \Symp_{c}(\plane)$  of the identity, such that $\varphi\circ \Psi\in SV(\plane)$ for all $\varphi \in \W$.

For $F\in \Symp(\disc)$, Proposition \ref{prop:MCG11a} %\ref{BoylandLemma} 
provides a Hamiltonian
 $H_{} \in \mathcal{H}(\disc)$ such that $F = \psi_{1,H}$. 
 Let $H^\dagger$ be a smooth extension to $\rr\times\plane$ and $\U_\epsilon(\disc) = \{z\in \plane~|~|z|<1+\epsilon\}$ and
let  $\alpha \colon\plane \rightarrow \rr$ 
be a smooth bump function satisfying $\alpha|_{\disc}=1$, $\alpha = 0$ on $\plane\setminus\U_\epsilon(\disc)$.
Take $\epsilon\in (0,1/2)$ and define $\widetilde H = \alpha H^\dagger$ with $\widetilde H \in \HH(\plane)$. % and $\widetilde H|_{\rr\times\disc} \in \HH(\disc)$.
The associated Hamiltonian isotopy is denoted by $\psi_{t,\widetilde H}$ and $\widetilde F = \psi_{1,\widetilde H}
\in \Ham(\plane)$. Moreover, $\psi_{t,\widetilde H}$
equals the identity on ${\plane\setminus\U_\epsilon(\disc)}$, i.e. $\psi_{t,\widetilde H}$ is supported in $\U_\epsilon(\disc)$,
and $\widetilde F|_{\disc} = F$.
% By the smoothness of $H_{0}$ on $\partial \disc$ one may extend
%it to a smooth Hamiltonian $\widetilde{H}_{0} \in \mathcal{H}(\U_\epsilon(\disc))$, $1>\epsilon>0$. 
%Let $\widehat{H}_{0} = \alpha \widetilde{H}_{0}$, where $\alpha : \disc_{1+\epsilon} \rightarrow \rr$ 
%is a smooth bump function satisfying $\alpha|_{\disc}=1$, $\alpha = 0$ on $\disc_{1+\epsilon} \backslash \disc_{1+ \epsilon / 2}$.
%Extend $\widehat{H}_{0}$ to $\plane$ by putting $\widehat{H}_{0}|_{\plane \backslash \disc_{1 + \epsilon}} = 0$ and denote it again by $\widehat{H}_{0}$.
%
%We have $\widehat{H}_{0} \in \mathcal{H}(\plane)$ and associated the Hamiltonian isotopy $\psi_{t,\widehat{H}_{0}}$ traces out a path in $\Symp(\plane)$.
%In addition, it holds that $ \psi_{t, \widehat{H}_{0}}$ is supported in $\disc_{2}$ for all $ t \in [0,1]$.

Fix $\ell\ge 3$ and choose $k>0$
 sufficiently large  such that
the symplectomorphisms 
\begin{equation}
 G_{i}=\psi_{i/k,\widetilde H} \circ \psi_{(i-1)/k,\widetilde H}^{-1}, \ i \in \{1, \dots,k \}
\end{equation}
are elements of $\W$. Each $G_{i}$ restricted to $\disc$ can be decomposed as follows:
\begin{equation}
 G_{i}|_\disc = \bigl(G_{i}|_\disc \circ \Psi\bigr) \circ \underbrace{\Psi'\circ\cdots\circ\Psi'}_{\kappa(\ell-1)},\quad \ell\ge3,~\kappa\in \nn,
\end{equation}
where $\Psi$ and $\Psi'$ are obtained from Lemma \ref{AlternatePsi} by choosing rotation angles $2\pi/\ell$ and $2\pi/\kappa\ell$ respectively. 
Observe that $\Psi \circ \Psi'^{\kappa(\ell-1)}|_{\disc} = \id$.
%by choosing $\tau = 2\pi/\kappa\ell$ in Lemma \ref{AlternatePsi}.
%
%Denote by $\widehat{F}$ the symplectomorphism $\psi_{1,\widehat{H}_{0}} \in \Symp(\plane)$.
%By the construction of $\widehat{H}_{0}$ we have $\widehat{F}|_{\disc}=F$.
From $\widetilde F$ we define the mapping $\widehat F\in \Symp(\plane)$:
\begin{equation} 
  \widehat{F}=(G_{k} \circ \Psi) \circ  \underbrace{\Psi' \circ \dots \circ \Psi'}_{\kappa(\ell-1)}\circ\cdots\circ (G_{1} \circ \Psi) \circ  \underbrace{\Psi'\circ\cdots\circ \Psi'}_{\kappa(\ell-1)}.
\end{equation}
By construction we have $\widehat F|_\disc = F$.
Let $\ell_\kappa= \kappa(\ell-1) +1$ and  $d = \ell_\kappa k$ and put 
\begin{equation}
  \widehat F_{j} = \begin{cases} G_{j/\ell_\kappa} \circ \Psi &\mbox{for }~~ j \in \{\ell_\kappa, 2\ell_\kappa,\cdots, d\}
    \\ \Psi' &\mbox{for } j \in \{1, \dots, d \} \backslash \{\ell_\kappa, 2\ell_\kappa, \dots, d \}. \end{cases}
\end{equation}  
with $\widehat F_j \in SV(\plane)$ for $j \in \{1, \dots, d \}$ and $F_j  = \widehat F_j|_{\disc}$.
Using the latter we obtain a decomposition of $F$ as  given in \eqref{eqn:decomp12}, % with $n=d$,
and with the additional property that the mappings $F_j$ extend to twist symplectomorphisms of the $\plane$,
which proves \eqref{eqn:decomp14}.

Each symplectomorphism $\widehat F_j$ can be connected to identity by a Hamiltonian path.
% given by Proposition \ref{MoserThm}. 
Let $H^j$ be the Hamiltonian given by Proposition  \ref{MoserThm}, which connects
$\widehat F_j$ to the identity via the Moser isotopy $\psi_{s,H^j}$, $s\in [0,1]$.
Let $t_{j}=j/d$ for all $j \in \{0, \dots, d \}$ and define 
%$\phi_t = \psi_{dt,H^1}$, for
%$t\in [0,1/d]$ and
\begin{equation}
\label{eqn:theisotopy}
\phi_t = \psi_{s^j(t),H^j} \circ \widehat F_{j-1} \circ\cdots\circ \widehat F_0,\quad t\in [t_{j-1},t_{j}], ~~j\in \{1,\cdots,d\},
\end{equation}
with $s^j(t) = d(t-t_j)$ and $\widehat F_0 = \id$.
Observe that, by construction, $\phi_{t_j}\circ \phi_{t_{j-1}}^{-1} = \widehat F_j$, for all
$j=1,\cdots,d$ and (i) - (iv) is satisfied.
%The construction of the isotopy $\phi_{t}$
%satisfying (i) - (iv) is now immediate by concatenating the isotopies from Proposition \ref{MoserThm}  and rescaling the time variable $t$ by the factor $1/d$ 
%and setting $t_{j}=j/d$ for all $j \in \{1, \dots, d \}$. 
Condition (v) follows from (iv) and from the fact that each $\widehat F_j$ leaves the disc $\disc$ invariant.

   All the symplectomorphisms in the decomposition are supported in the disc $\U_\epsilon(\disc)$, hence Conditions (ii) and (iii) of  Lemma
   \ref{AlternatePsi}  %and  \ref{AlternatePsi2} 
   imply Properties (vi) and (vii).
\end{proof}

\begin{rem}
\label{rem:extendedMI}
The chained Moser isotopies in Proposition \ref{MoserThm} can be extended with two more parameters  $r\ge 0$ and $\rho\ge 0$.
Consider the decoposition
\begin{equation}
\label{eqn:decomp16}
\widehat F = \widehat F_d\circ \cdots \circ \widehat F_1 \circ \underbrace{\Psi^{\ell_r}_r\circ\cdots\circ\Psi^{\ell_r}_r}_r \circ
\underbrace{\Upsilon^{\ell_\rho}_\rho\circ\cdots\circ\Upsilon^{\ell_\rho}_\rho}_\rho, 
\end{equation}
where $\Psi_r^{\ell_r}|_\disc = \id$ and $\ell_r\ge 3$, and $\Upsilon_\rho^{\ell_\rho}|_\disc = \id$ and $\ell_\rho\ge 3$.
We can again define an Moser isotopy as in \eqref{eqn:theisotopy} with $d$ replaced by $d+r\ell_r+\rho\ell_\rho$. 
The isotopy is again called a chained  Moser isotopy and denoted by $\phi_t$, and the extended period will again be denoted by $d$.
The strands $\phi_t(z_\pm)$ link with the cylinder $[0,1]\times\disc$ and with each other with  linking number $2\rho$.
% and the strands
%$\phi_t(z_\pm')$ 
\end{rem}

\subsection{The discrete action functional}\label{subsec:genfun}
%
%Having set up the isotopy and decomposition into twist symplectomorphisms we proceed as in \cite{BraidConleyIndex} and define an associated exact parabolic flow. 
%In this way our problem of finding discrete invariant sets for $F$ will be transformed into a critical point problem on an appropriate  space  of sequences.

Let $F \in \Symp(\disc)$ be the given  symplectomorphism of the 2-disc and let $\{ \phi_{t} \}_{t \in \rr}$ and $\{t_{j} \}_{j \in \{ 0, \dots, d \} }$ 
be the associated continuous isotopy and   sequence of discretization times as given in Proposition~\ref{Interpolation2}
for the extension $\widehat F$.
The isotopy is extended periodically, that is $ \phi_{t+s} = \phi_{t} \circ \phi_{1}^{s} $ and $t_{j+s d} = s + t_{j}$ for all $s \in \mathbb{Z}$.
The decomposition of $\widehat F$ given by Proposition \ref{Interpolation2} yields a periodic sequence of positive twist symplectomorphisms $\{\widehat F_j\}$, with $\widehat F_j =
\phi_{t_{j+1}} \circ \phi_{t_{j}}^{-1} \in SV(\plane)$ and $\widehat F_{j+d} = \widehat F_j$.

%Assume that $\left \{ x_{i}^{j} \right \}_{(i,j) \in \{1, \dots, d\} \times \{ 1, \dots, n+m \}}$ is a discretization of a positive braid pair 
%$(\mathbf{u},\mathbf{v}) = \mathbf{x} \in \Omega^{n+m}_{\plane}$
%given by the sequence $\{t_{k}\}_{k}$.
%We extend $ \left \{ x_{i}^{j} \right \}_{i,j}$ to $n$ full periodic sequences by Equation~\eqref{PeriodicExtension} and denote again the induced permutation by $\tau$. 

\begin{defn}
A sequence $\{ (x_{j},y_{j}) \}_{j \in \mathbb{Z}}$ is a \textit{full orbit} for the system $\{ \widehat F_j\}$ if
\[
(x_{j+1},y_{j+1}) =   \widehat F_j (x_{j},y_{j}), \quad j\in \zz.
\]
If $(x_{j+n},y_{j+d}) = (x_j,y_j)$ for all $j$, then $\{ (x_{j},y_{j}) \}_{j \in \mathbb{Z}}$ is called an \emph{$d$-periodic sequence}
for the system $\{ \widehat F_j\}$.
\end{defn}

%\begin{prop} \label{RecRel}
%  There exists an exact parabolic recurrence relation $\mathcal{R} = \{ \mathcal{R}_{k} \}_{k \in \mathbb{Z} }$ of period $d$ such that 
%  $\mathcal{R}_{i}\left(x^{j}_{i-1},x^{j}_{i},x^{j}_{i+1}\right) = 0 \ \forall i,j$
%  implies that there exists a collection of $n$ sequences $\left\{ y^{j}_{i} \right\}_{i \in \mathbb{Z}}$ 
%  such that $\left\{ \left(x^{j}_{i},y^{j}_{i}\right) \right\}_{i \in \mathbb{Z}}$ is a full orbit 
%  of $\left\{ \phi_{t_{k+1}} \circ \phi_{t_{k}}^{-1} \right\}_{k \in \mathbb{Z}}$ and 
%% 
%  \begin{equation}\label{yPeriodicExtension}
%    y^{j}_{i+d} = y^{\tau(j)}_{i} \ \forall i,j.
%  \end{equation}
%  
%  Conversely, given a discretization $\left\{(x^{j}_{i},y^{j}_{i})\right\}_{i,j}$ of a braid $\mathbf{x} \in \Omega^{n+m}_{\disc}$
%  which is a full orbit of $\left\{ \phi_{t_{k+1}} \circ \phi_{t_{k}}^{-1} \right\}_{k \in \mathbb{Z}}$ we have 
%  $$
%    \mathcal{R}_{i}\left(x^{j}_{i-1},x^{j}_{i},x^{j}_{i+1}\right)=0 \ \forall i,j.
%  $$
%\end{prop}

  For every twist symplectomorphism $\widehat F_j  \in SV(\plane)$ 
we assign a generating function
  $\h_{j}=\h_{j}(x_{j}, x_{j+1})$ on the $x$-coordinates, which implies that
  $y_{j}= -\partial_1 \h_j$ %\frac{\partial h_j}{\partial x_{j}}$ 
  and $y_{j+1}=\partial_2 \h_j$. %\frac{\partial h_j}{\partial x_{j+1}}$. 
  From the twist property it follows that
  \begin{equation}\label{hmonotonicity}
    \partial_{1} \partial_{2} \h_{j} < 0,\quad \forall j\in \zz.
  \end{equation}
  Note that the sequence $\{\h_{j} \}$ is $d$-periodic.  
 
  Define the \textit{action functional} $W_{d}:\rr^{\mathbb{Z}/d\mathbb{Z}} \rightarrow \rr$ by
  \begin{equation}
  \label{eqn:action1}
    W_{d}\bigl(\{x_j\}\bigr):= \sum\limits_{j=0}^{d-1} \h_{j} (x_j, x_{j+1}).
  \end{equation}
A sequence $\{x_j\}$ is a critical point of $W_d$ if and only if
\begin{equation}
\label{eqn:parabrec}
 \mathcal{R}_{j}(x_{j-1}, x_{j}, x_{j+1}) :=    -\partial_{2} \h_{j-1} \left(x_{j-1}, x_{j}\right) - \partial_{1} \h_j \left(x_{j}, x_{j+1}\right)=0,
\end{equation}
    for all $j \in \mathbb{Z}$.
    The $y$-coordinates satisfy $y_{j}=\partial_{2} \h_{j-1}\left(x_{j-1},x_{j}\right)$. %, for all $j\in \zz$.
% The functions $\mathcal{R}_j$ satisfy $\partial_1\mathcal{R}_j>0$ and $\partial_3\mathcal{R}_j>0$ and the associate recurrence relation in \eqref{eqn:parabrec} is called
% a \emph{parabolic recurrence relation}.
 
%  Critical points of $W_{d}$ satisfy the following recurrence relation
%%
%  \begin{equation}
%    \mathcal{R}_{i}(x_{i-1}, x_{i}, x_{i+1}) := - \partial_{2} h_{i-1} (x_{i-1}, x_{i}) - \partial_{1} h_{i} (x_{i}, x_{i+1}) = 0 \ \forall i \in \mathbb{Z}.
%  \end{equation}
%%
  Periodicity and exactness of $\mathcal{R}_j$ is immediate. The monotonicity follows directly from inequality~\eqref{hmonotonicity}.
A periodic point $z$, i.e. $F^d(z) = z$, is equivalent to the periodic sequence $\{ (x_{j},y_{j}) \}_{j \in \mathbb{Z}}$, with $z=(x_0,y_0)\in \disc$.
Since $z=(x_0,y_0)\in \disc$, the invariance of $\disc$ under $F$ implies that $(x_{j},y_{j})\in \disc$ for all $j$.
The above considerations yield the following variational principle.
\begin{prop}
\label{prop:varprin1}
A $d$-periodic sequence $\{ (x_{j},y_{j}) \}_{j \in \mathbb{Z}}$ is an $d$-periodic orbit for the system $\{ \widehat F_j\}$ if and only if
the sequence of $x$-coordinates $\{ x_{j} \}$ is a critical point of $W_d$. 
\end{prop}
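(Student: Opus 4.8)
The plan is to reduce both sides of the asserted equivalence to the single system of equations $\mathcal{R}_j(x_{j-1},x_j,x_{j+1})=0$, $j\in\zz$, and thereby identify $d$-periodic orbits with critical points of $W_d$. The starting point is the observation, already available from the twist property, that each $\widehat F_j\in SV(\plane)$ is completely encoded by its generating function $\h_j$: a pair satisfies $(x_{j+1},y_{j+1})=\widehat F_j(x_j,y_j)$ precisely when the two relations $y_j=-\partial_1\h_j(x_j,x_{j+1})$ and $y_{j+1}=\partial_2\h_j(x_j,x_{j+1})$ hold. The monotonicity $\partial_1\partial_2\h_j<0$ from \eqref{hmonotonicity} guarantees that the first relation can be solved uniquely for $x_{j+1}$ in terms of $(x_j,y_j)$, so these two relations indeed describe the graph of a well-defined symplectomorphism.

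For the forward implication I would take a $d$-periodic orbit $\{(x_j,y_j)\}$ and read off $y_j$ in two ways. Viewing $(x_j,y_j)$ as the image of $(x_{j-1},y_{j-1})$ under $\widehat F_{j-1}$ gives $y_j=\partial_2\h_{j-1}(x_{j-1},x_j)$, while viewing $(x_j,y_j)$ as the preimage of $(x_{j+1},y_{j+1})$ under $\widehat F_j$ gives $y_j=-\partial_1\h_j(x_j,x_{j+1})$. Equating the two expressions yields $\mathcal{R}_j=-\partial_2\h_{j-1}(x_{j-1},x_j)-\partial_1\h_j(x_j,x_{j+1})=0$, and the $d$-periodicity of $\{(x_j,y_j)\}$ and of $\{\h_j\}$ propagates this to all $j\in\zz$.

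It then remains to match this system with the critical point condition for $W_d$. Since the sum $W_d=\sum_{k=0}^{d-1}\h_k(x_k,x_{k+1})$ is cyclic, each variable $x_j$ occurs in exactly two consecutive summands, as the second argument of $\h_{j-1}$ and the first argument of $\h_j$; differentiating gives $\partial W_d/\partial x_j=\partial_2\h_{j-1}(x_{j-1},x_j)+\partial_1\h_j(x_j,x_{j+1})=-\mathcal{R}_j$. Hence $\{x_j\}$ is a critical point of $W_d$ if and only if $\mathcal{R}_j=0$ for every $j$, which is exactly the system produced above. For the converse I would start from a critical point $\{x_j\}$, \emph{define} $y_j:=\partial_2\h_{j-1}(x_{j-1},x_j)$, and use $\mathcal{R}_j=0$ to obtain simultaneously $y_j=-\partial_1\h_j(x_j,x_{j+1})$; the two generating-function relations then hold for each $j$, which is precisely $(x_{j+1},y_{j+1})=\widehat F_j(x_j,y_j)$, so $\{(x_j,y_j)\}$ is a full orbit whose $d$-periodicity is inherited from that of $\{x_j\}$.

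I do not expect a serious obstacle here: the argument is the standard discrete-Lagrangian correspondence between generating functions and their symplectomorphisms. The only genuine care is bookkeeping --- keeping the sign conventions $y_j=-\partial_1\h_j$ and $y_{j+1}=\partial_2\h_j$ straight, and handling the indices modulo $d$ correctly when differentiating the cyclic action --- so that is where I would double-check, in particular verifying that the reconstructed $y$-coordinates agree with the formula $y_j=\partial_2\h_{j-1}(x_{j-1},x_j)$ recorded after \eqref{eqn:parabrec}.
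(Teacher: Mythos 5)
Your proposal is correct and is essentially the paper's own argument: the paper proves Proposition \ref{prop:varprin1} implicitly through exactly this generating-function correspondence, with $y_j=-\partial_1\h_j(x_j,x_{j+1})$, $y_{j+1}=\partial_2\h_j(x_j,x_{j+1})$ characterizing the graph of each twist map $\widehat F_j$, and the cyclic differentiation of $W_d$ yielding $\partial W_d/\partial x_j=-\mathcal{R}_j$ so that critical points coincide with orbits, the $y$-coordinates being recovered via $y_j=\partial_2\h_{j-1}(x_{j-1},x_j)$. Your sign bookkeeping and the reconstruction of $y_j$ in the converse direction match the paper's conventions exactly.
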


The idea of periodic sequences can be generalized to periodic configurations.
Let $\{B_j\}_{j\in \zz}$, $B_j = \{(x_j^\mu,y_j^\mu)~|~\mu=1,\cdots,m\}\in \C_m(\plane)$ and $B_{j+d} = B_j$ for all $j$. 
Such a sequence $\{B_j\}$ is a $d$-periodic sequence for $\{\widehat F_j\}$ if $\widehat F_j(B_j) = B_{j+1}$ for all $j\in \zz$.

For a $d$-periodic sequence $\{B_j\}$, the $x$-projection yields a discretized braid $\bb = \{\bb^\mu\} = \{x_j^\mu\}$, cf.\ Definition \ref{PL}.
The above action functional can be extended to the space of discretized braids $\Conf_m^d$:
\begin{equation}
\label{eqn:action2}
W_d(\bb) := \sum_{\mu=1}^m W_d(\bb^\mu), 
\end{equation}
where $W_d(\bb^\mu)$ is given by \eqref{eqn:action1}. This yields the following extension of the variational principle.
\begin{prop}
\label{prop:varprin2}
A $d$-periodic sequence $\{ B_j \}_{j \in \mathbb{Z}}$, $B_j\in \C_m(\plane)$, is a $d$-periodic sequence of configurations for the system $\{ \widehat F_j\}$ if and only if
the sequence of $x$-coordinates $\bb=\{ x^\mu_{j} \}$ is a critical point of $W_d$ on $\Conf_m^d$. 
\end{prop}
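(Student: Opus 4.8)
The plan is to reduce the statement to the single-orbit variational principle of Proposition \ref{prop:varprin1}, exploiting the fact that the configuration action \eqref{eqn:action2} splits as a sum $W_d(\bb) = \sum_{\mu=1}^m W_d(\bb^\mu)$ over the individual strands. First I would fix a labelling of the points in the configurations: a $d$-periodic sequence $\{B_j\}$ with $\widehat F_j(B_j) = B_{j+1}$ assigns to each point of $B_0$ a full trajectory under the system $\{\widehat F_j\}$, and after one period these trajectories return to $B_0$ up to a permutation $\tau \in S_m$. Projecting onto $x$-coordinates produces strands $\bb^\mu = (x_0^\mu, \dots, x_d^\mu)$ satisfying the closure relation $x_d^\mu = x_0^{\tau(\mu)}$ of Definition \ref{PL}(b), so the admissible variations are the $md$ free anchor points $x_j^\mu$, $0 \le j \le d-1$.

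Next I would compute the gradient of $W_d$ with respect to these free anchor points. For an interior index $1 \le j \le d-1$ only the two summands $\h_{j-1}(x_{j-1}^\mu, x_j^\mu)$ and $\h_j(x_j^\mu, x_{j+1}^\mu)$ depend on $x_j^\mu$, so $\partial_{x_j^\mu} W_d = \partial_2 \h_{j-1} + \partial_1 \h_j = -\RR_j(x_{j-1}^\mu, x_j^\mu, x_{j+1}^\mu)$, which is precisely \eqref{eqn:parabrec}. For the junction index $j=0$ the variable $x_0^\mu$ appears both in $\h_0(x_0^\mu, x_1^\mu)$ and, through the closure $x_0^\mu = x_d^{\tau^{-1}(\mu)}$, in $\h_{d-1}(x_{d-1}^{\tau^{-1}(\mu)}, x_0^\mu)$; invoking the $d$-periodicity $\h_{j+d} = \h_j$ this derivative is again the parabolic recurrence relation evaluated at the junction. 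Hence $\nabla W_d(\bb) = 0$ is equivalent to $\RR_j(x_{j-1}^\mu, x_j^\mu, x_{j+1}^\mu) = 0$ for every $j \in \zz$ and every $\mu$. The mechanism underlying Proposition \ref{prop:varprin1} --- that $\RR_j = 0$ together with $y_j^\mu = \partial_2 \h_{j-1}(x_{j-1}^\mu, x_j^\mu)$ encodes exactly one step $(x_{j+1}^\mu, y_{j+1}^\mu) = \widehat F_j(x_j^\mu, y_j^\mu)$ of the twist system --- then shows that the critical point equations hold precisely when each labelled trajectory is a full orbit of $\{\widehat F_j\}$; closing up by $\tau$ and the $d$-periodicity of the $\h_j$ returns $\widehat F_j(B_j) = B_{j+1}$ with $B_{j+d} = B_j$.

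The step I expect to require the most care is matching the two notions of admissibility: that the projected strands genuinely lie in $\Conf_m^d$ (for the forward direction) and that the reconstructed configurations genuinely lie in $\C_m(\plane)$ (for the converse). Both hinge on the twist monotonicity \eqref{hmonotonicity}. If two strands satisfy $x_j^\mu = x_j^{\mu'}$ at some $j$, then since $B_j \in \C_m(\plane)$ their $y$-coordinates differ, and because the first coordinate of the positive twist map $\widehat F_j$ is strictly increasing in the $y$-variable the ordering of $x_{j+1}^\mu, x_{j+1}^{\mu'}$ is forced; reading the relation $y_j = \partial_2 \h_{j-1}(x_{j-1}, x_j)$ backwards together with $\partial_1 \partial_2 \h_{j-1} < 0$ forces the opposite ordering of $x_{j-1}^\mu, x_{j-1}^{\mu'}$. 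This yields the transversality condition $(x_{j-1}^\mu - x_{j-1}^{\mu'})(x_{j+1}^\mu - x_{j+1}^{\mu'}) < 0$ of Definition \ref{PL}(c), so $\bb \in \Conf_m^d$. Conversely, the same strict monotonicity shows that whenever $x_j^\mu = x_j^{\mu'}$ the reconstructed $y$-coordinates are distinct, so each reconstructed $B_j$ is an honest configuration of $m$ distinct points. With these admissibility checks in place, the two implications of the equivalence follow from the gradient computation and Proposition \ref{prop:varprin1}.
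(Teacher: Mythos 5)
Your proposal is correct and follows essentially the route the paper takes: the paper treats Proposition \ref{prop:varprin2} as an immediate consequence of Proposition \ref{prop:varprin1} applied strand by strand, using exactly the splitting $W_d(\bb)=\sum_\mu W_d(\bb^\mu)$ together with the permutation closure $x_d^\mu = x_0^{\tau(\mu)}$ and the $d$-periodicity of the $\h_j$. Your extra care about admissibility --- deriving the transversality condition of Definition \ref{PL}(c) from the twist monotonicity \eqref{hmonotonicity}, and the distinctness of the reconstructed $y$-coordinates from $\partial_1\partial_2\h_{j-1}<0$ --- is sound and is precisely the argument the paper records separately in Lemma \ref{lem:propergraphs}, so you have merely made explicit what the paper defers.
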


A discretized braid $\bb$ that is stationary for $W_d$ if it satisfies the parabolic recurrence relations in \eqref{eqn:parabrec} for all $\mu$
and   the periodicity condition in Definition \ref{PL}(b).
In Section \ref{sec:braiding} we show that $d$-periodic sequences of configurations $\{B_j\}$ for the system $\{ \widehat F_j\}$
yields  geometric braids. % via a chained Moser isotopy.

\section{Braiding of periodic points}
\label{sec:braiding}
For  symplectomorphisms $F\in \Symp(\disc)$, with a finite invariant set $B\subset \inter \disc$, the mapping class
can be identified via a chained Moser isotopy.
%Given $F\in \Symp(\disc)$, the existence of a finite invariant sets $A\subset \disc$ determines the mapping class of $F$ relative to $A$.
%To be more precise  the mapping class $[F]$ is represented by an element in $\mathcal{B}_{m}/Z(\mathcal{B}_{m})$, where $m= \# A$.
%Elements in $\mathcal{B}_{m}/Z(\mathcal{B}_{m})$ are braid words that are equal modulo full twists.

\begin{prop}
\label{prop:traceout1}
Let $B\subset\inter\disc$, with $\# B=m$,  be a finite invariant set for $F\in \Symp(\disc)$ and let
$\phi_t$ be a chained Moser isotopy given in Proposition \ref{Interpolation2}. Then, $\bbeta(t) = \phi_t(B)$ represents a   geometric  braid based at $B\in \C_m\disc$     with only positive crossings and $\beta = \imath_B\bigl([\bbeta]_B\bigr)$ is a
positive word  in
the braid monoid $\BB_m^+$.
The $x$-projection $\bb(t) = \pi_x\bbeta(t)$ on the $(t,x)$-plane is a (continuous)   piecewise linear braid diagram.
% which represents the positive braid $\bbeta(t)$.
\end{prop}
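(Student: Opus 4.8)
The plan is to read the braid word directly off the $(t,x)$-projection of the chained Moser isotopy and to use the positive twist property to force every crossing in that projection to be positive. First I would verify that $\bbeta(t)=\phi_t(B)$ is a geometric braid: by Proposition~\ref{Interpolation2} the map $\phi_t$ is a continuous (piecewise smooth) path in $\Symp(\plane)$ with $\phi_0=\id$ and $\phi_1|_\disc=F$, and since each $\phi_t$ is a diffeomorphism and $B$ is finite, $\bbeta(t)$ is a continuous path of $m$ distinct points, i.e.\ a loop in a configuration space. Invariance of $B$ gives $\bbeta(0)=B=F(B)=\bbeta(1)$, so $\bbeta$ is a closed loop based at $B$, hence a geometric braid. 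By property~(v) the strands stay in the strip $[-1,1]\times\rr$; since the inclusion $\disc\hookrightarrow[-1,1]\times\rr$ induces an isomorphism on configuration-space braid groups, we may regard $[\bbeta]_B$ as a class in $\pi_1\bigl(\C_m\disc,B\bigr)$ and set $\beta=\imath_B([\bbeta]_B)\in\BB_m$.

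Next I would record the piecewise-linear structure. On each interval $[t_{j-1},t_j]$ the isotopy equals a Moser isotopy of $\widehat F_j\in SV(\plane)$, so by property~(iv) together with Proposition~\ref{MoserThm} the $(t,x)$-projection of every orbit is an affine segment. Thus $\bb^\mu(t)=\pi_x\phi_t(z^\mu)$ is continuous and linear on each $[t_{j-1},t_j]$, with anchor points $x^\mu_j=\pi_x\phi_{t_j}(z^\mu)$, and the superimposed graphs are precisely the piecewise linear braid diagram $\Bd(\bb)$ of Definition~\ref{PL}; this gives the final assertion.

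The heart of the argument is positivity of the crossings. A crossing occurs at a time $t^*$ at which two strands share an $x$-coordinate, $\pi_x\phi_{t^*}(z^\mu)=\pi_x\phi_{t^*}(z^{\mu'})=:x^*$, while their $y$-coordinates differ, say $y_\mu>y_{\mu'}$; the larger $y$-value records which strand lies in front in the projection. For small $\epsilon>0$ the map $G=\phi_{t^*+\epsilon}\circ\phi_{t^*}^{-1}$ is a short-time Moser flow of some $H^j$ and hence a positive twist map $(x,y)\mapsto(f,g)$ with $\partial_y f>0$; therefore $f(x^*,y_\mu)>f(x^*,y_{\mu'})$, so the strand with the larger $y$-coordinate passes to the right of the other as $t$ increases through $t^*$. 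In the $(t,x)$-diagram this is exactly an over-strand running from lower-left to upper-right, i.e.\ a positive generator $\sigma_k$ in the convention underlying \eqref{eqn:word1}. Performing this at every crossing shows that the word read off $\Bd(\bb)$ is a product of positive generators, so $\beta\in\BB_m^+$.

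The step requiring the most care is this crossing analysis: one must exclude, or perturb away, degenerate intersections---three or more strands meeting at once, tangencies, or a crossing landing exactly on a breakpoint $t_j$. Since $SV(\plane)$ is $C^1$-open and the inequality $\partial_y f>0$ survives small perturbations, an arbitrarily small homotopy of the chained Moser isotopy renders all intersections transverse double points at interior times without changing the class $[\bbeta]_B$ and without creating negative crossings; the positive word is then unambiguous, and homotopy invariance returns the conclusion to the original $\bbeta$.
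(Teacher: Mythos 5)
Your positivity mechanism is exactly the paper's: at an $x$-coincidence the strand with larger $y$ must move to the right because nearby time-steps of the chained Moser isotopy are positive twist maps. The paper packages the same computation in Lemma \ref{lem:linking1} via the Lagrangians of Proposition \ref{MoserThm} (at a crossing, $y = \partial_p L^j$ together with $\partial^2_{pp}L^j \ge \delta > 0$ ties the $y$-ordering to the ordering of the slopes), treats coincidences at anchor points $t_j$ as a separate explicit case, and then applies the resulting 2-strand statement to all pairs of strands via the $m$-fold cover of $\bbeta$. Up to that point your argument is sound, including the remark (glossed over in the paper) that the strands a priori live only in the strip $[-1,1]\times\rr$ rather than in $\disc$.

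The gap is your final paragraph. The proposition is a statement about the \emph{actual} isotopy: that $\bb(t)=\pi_x\bbeta(t)$ \emph{is} a piecewise linear braid diagram --- no tangencies, transversality in the sense of Definition \ref{PL}(c) --- and that $\bbeta$ itself has only positive crossings. Perturbing the chained Moser isotopy proves neither: a perturbed $\tilde\phi_t$ produces a different diagram, and if the original projection had a tangency the stated conclusion would simply be false for it; moreover your perturbation would have to keep $\tilde\phi_0=\id$ and $\tilde\phi_1(B)=B$ in order not to change $[\bbeta]_B$, which you do not arrange. The paper instead rules out tangencies outright (Lemma \ref{lem:propergraphs}): the anchor sequences of any two strands are zeros of the same parabolic recurrence relations, and strict monotonicity of $\mathcal{R}_j$ in its first and third variables makes a tangency $x_j = x_j'$ with weak ordering on both sides incompatible with $\mathcal{R}_j(x_{j-1},x_j,x_{j+1}) = \mathcal{R}_j(x'_{j-1},x_j,x'_{j+1}) = 0$. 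In fact no perturbation is needed in your framework either: your own short-time twist computation, run also backward in time (the inverse of a positive twist map is a negative twist map), shows that immediately before an $x$-coincidence the larger-$y$ strand is strictly to the left and immediately after strictly to the right, so every coincidence --- including one at a breakpoint $t_j$, where the forward and backward flows belong to $H^{j+1}$ and $H^j$ respectively --- is automatically a transverse positive crossing; triple points are then harmless for reading off a positive word, since the diagram is positively isotopic to a regular one. Replace the perturbation paragraph with this two-sided argument (or with the paper's Lemma \ref{lem:propergraphs}) and your proof is complete.
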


 Proposition \ref{prop:MCG12} implies that the associated positive braid word $\beta\in \mathcal{B}_m$, derived from the braid diagram $\pi_x\phi_t(B)$ determines the mapping class of $F$
relative to $B$.
If the based path $\bbeta(t)=\phi_t(B)$ is regarded as a \emph{free loop}  $\sbb^1 \to \C_m\disc$, i.e. discarding the base point, then $\bbeta$ is referred to as a \emph{closed} geometric braid in $\disc$.
\begin{defn}
\label{defn:acylindrical}
Let $\bbeta$ be a geometric braid in $\disc$. A component of $\bbeta'\subset \bbeta$ is called \emph{cylindrical in} $\bbeta$ if $\bbeta'$ can be deformed onto $\partial \disc$ as a closed geometric braid. Otherwise $\bbeta'$ is called \emph{acylindrical}. A union of components $\bbeta'$ is called cylindrical/acylindrical in $\bbeta$ if all members are.
\end{defn}

\begin{rem}
\label{rmk:acylindrical}
A  positive conjugacy class $\llb \gamma,\aset\rrb$ is associated with braid classes in $[\ba\rel\bb]_{\possim}$ 
in $\Conf_{n,m}^{d+q}$, cf.\ Section \ref{subsec:algpres}. If for a representative $\ba\rel\bb$ it holds that $\Bd(\ba)$ is cylindrical/acylindrical
in $\bgamma =\Bd(\ba)\rel\Bd(\bb)$,
then $\llb \gamma,\aset\rrb$ is said to  cylindrical/acylindrical, cf.\  Definition \ref{defn:proper2}.
\end{rem}

%\begin{proof}
%%The Euler-Lagrange equation for the extremals of $L$ as given in Appendix \ref{App:A} is: $\ddot{x}(t) = 0$
%%on $t\in [0,1]$ with boundary conditions $x(0)=x_0$ and $x(1) = x_1$. Moreover, $y(t) = \partial_{\dot{x}} L(t,x(t),x_1-x_0)$.
%%In order the compute $\frac{\partial x_1}{\partial y_0}$ we consider the linearization $\ddot{v}(t) = 0$, with $v(0) = 0$ and $\dot{v}(0) = 1$.
%%Then, $\frac{\partial x_1}{\partial y_0} = v(1) = 1$. 
%%As a matter of fact the Moser isotopy $\phi_t$ has the property that $\phi_t$ is positive twist for all $t\in (0,1]$ since
%%$\frac{\partial x_t}{\partial y_0} = v(t) = t>0$, where $x_t = \pi_x\phi_t(x_0,y_0)$.
%%
%It follows from Proposition 2.13 in \cite{LeCalvez} that projection of strands $\phi_t(x,y)$ and $\phi_t(x',y')$ onto the $(t,x)$ is piecewise linear
%and a truthful braid diagram of the strands, which form a positive link with the same linking numbers.
%This implies that all the braiding information is captured by the piecewise linear projections.
%\end{proof}

Let $z,z'\in \plane$ be distinct points with the property that $\widehat F^n(z) = z$ and $\widehat F^n(z') = z'$, for some $n\ge 1$, and where $\widehat F = \phi_1$ and
$\phi_t$ a chained Moser isotopy constructed in Proposition \ref{Interpolation2}.
Define the continuous functions $z(t) = \phi_t(z)$ and $z'(t) = \phi_t(z')$ and let $x(t)$ and $x'(t)$ the $x$-projection of $z(t)$ and $z'(t)$ respectively.
By Proposition \ref{Interpolation2}, $x(t)$ and $x'(t)$ are (continuous) piecewise linear functions that are uniquely determined by the sequence
$\{t_j\}_{j=0}^{nd}$, $t_j = j/d$.

\begin{lem}[cf.\ \cite{BraidConleyIndex}]
\label{lem:propergraphs}
The two $x$-projections $x(t)$ and $x'(t)$ form a (piecewise linear) braid diagram, i.e.\ no tangencies.
The intersection number $\iota\bigl(x(t),x'(t)\bigr)$, given as the total number of intersections of the graphs of $x(t)$ and $x'(t)$
on the interval $t\in [0,n]$,
 is well-defined and even.
\end{lem}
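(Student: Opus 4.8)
The plan is to exploit two structural features of the chained Moser isotopy $\phi_t$: that each $x$-projection is piecewise affine with breakpoints exactly at the discretization times $t_j = j/d$ (Property (iv) of Proposition \ref{Interpolation2}), and that the interpolating maps $\widehat F_j$ are positive twist symplectomorphisms, so that their generating functions $\h_j(x_j,x_{j+1})$ obey the monotonicity \eqref{hmonotonicity}. Write $z_j = \phi_{t_j}(z) = (x_j,y_j)$ and $z'_j = \phi_{t_j}(z') = (x'_j,y'_j)$. Because each $\phi_{t_j}$ is a diffeomorphism and $z \neq z'$, the full points never coincide, so whenever $x_j = x'_j$ one necessarily has $y_j \neq y'_j$.

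First I would establish the no-tangency (braid-diagram) claim. On each interval $[t_{j-1},t_j]$ the graphs of $x(t)$ and $x'(t)$ are line segments, so they either miss each other, meet transversally at one interior point, or coincide; the last possibility is excluded, since if $x_j = x'_j$ then $y_j \neq y'_j$ and, as $\widehat F_j$ is a positive twist, the $x$-coordinates separate strictly at the next anchor, so two consecutive anchors cannot both carry a coincidence. It remains to rule out tangencies at anchor points: suppose $x_j = x'_j$ with, say, $y_j > y'_j$. Differentiating $y_j = -\partial_1\h_j(x_j,x_{j+1})$ at fixed $x_j$ gives $\partial y_j/\partial x_{j+1} = -\partial_1\partial_2\h_j > 0$, whence $x_{j+1} - x'_{j+1} > 0$; differentiating $y_j = \partial_2\h_{j-1}(x_{j-1},x_j)$ at fixed $x_j$ gives $\partial y_j/\partial x_{j-1} = \partial_1\partial_2\h_{j-1} < 0$, whence $x_{j-1} - x'_{j-1} < 0$. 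Thus $(x_{j-1}-x'_{j-1})(x_{j+1}-x'_{j+1}) < 0$, which is exactly the transversality condition (c) of Definition \ref{PL}: the graph genuinely crosses rather than touches. Combined with the interior case, every intersection is transverse, so the two $x$-projections form a PL braid diagram; and since there are finitely many intervals, each affine piece contributing at most one crossing, $\iota\bigl(x(t),x'(t)\bigr)$ is finite and unambiguous, hence well-defined.

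For the parity I would pass to the difference $g(t) := x(t) - x'(t)$. The $n$-periodicity of $z$ and $z'$ under $\widehat F = \phi_1$ gives $\phi_n = \phi_1^n = \widehat F^n$, so $z(n) = z = z(0)$ and $z'(n) = z'(0)$; hence $g(n) = g(0)$ and $g$ descends to a continuous function on the circle $\rr/n\zz$. By the no-tangency analysis every zero of $g$ is a simple sign change: at an interior crossing $g$ is affine with endpoints of opposite sign, and at an anchor crossing condition (c) shows the signs of $g$ on the two adjacent intervals are opposite (using that adjacent anchors cannot also be zeros). A continuous function on the circle whose zeros are all simple sign changes must have an even number of them, since the sign has to return to its starting value after one full loop. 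Therefore $\iota\bigl(x(t),x'(t)\bigr)$ is even.

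I expect the main obstacle to be the anchor-point transversality argument, namely converting the geometric positive-twist hypothesis into the discrete sign condition (c) through the generating-function monotonicity \eqref{hmonotonicity}, together with the observation that adjacent anchor points cannot simultaneously be crossings, so that each anchor zero is a clean sign change. Once that is in hand, the piecewise linearity, the interior-crossing case, and the circle parity count are routine.
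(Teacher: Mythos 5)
Your proof is correct and takes essentially the same approach as the paper: the paper rules out tangencies by evaluating the parabolic recurrence relations $\mathcal{R}_j(x_{j-1},x_j,x_{j+1}) = -\partial_2\h_{j-1}-\partial_1\h_j$ at a putative tangency and invoking their strict monotonicity in the first and third arguments, which is precisely your generating-function inequality \eqref{hmonotonicity} — your direct derivation of the strict sign condition $(x_{j-1}-x'_{j-1})(x_{j+1}-x'_{j+1})<0$ from $y_j\neq y'_j$ and the twist property is the contrapositive reorganization of the same argument. Your explicit circle/sign-change count for the evenness of $\iota\bigl(x(t),x'(t)\bigr)$ is a detail the paper leaves implicit, but it is the intended reasoning.
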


\begin{proof}
Let $x_j = x(t_j)$ and $x_j' = x'(t_j)$, $j=0,\cdots,nd$ and by the theory in  Section \ref{subsec:genfun} the sequences satisfy the parabolic recurrence relations
$\mathcal{R}_{j}(x_{j-1}, x_{j}, x_{j+1}) =0$ and $\mathcal{R}_{j}(x'_{j-1}, x'_{j}, x'_{j+1}) =0$.
Suppose the sequences $\{x_j\}$ and $\{x'_j\}$ have a tangency at $x_j = x_j'$ (but are not identically equal). Then, either $x'_{j-1}<x_{j-1}$ and $x'_{j+1} \le x_{j+1}$, or
$x'_{j-1}\le x_{j-1}$ and $x'_{j+1} < x_{j+1}$, and similar with the role of $\{x_j\}$ and $\{x'_j\}$ reversed.
Since functions $\mathcal {R}_j$ are strictly increasing in the first and third variables and since $x_j = x'_j$ both evaluations of $\mathcal{R}_j$ cannot be zero simultaneously,
which contradicts the existence of tangencies. All intersections of $x(t)$ and $x'(t)$ are therefore transverse in the sense of Definition \ref{PL}(c) and thus 
$\iota\bigl(x(t),x'(t)\bigr)$ is well-defined and even.
\end{proof}

The curves $z(t)$ and $z'(t)$ may be regarded as 3-dimensional (continuous) curves $z,z'\colon \rr/\zz \to \rr^3$, $t\mapsto (t,z(t))$ and $t\mapsto (t,z'(t))$.
%Denote by $L\bigl(z(t),z'(t)\bigr)$ the Gauss linking number.
Due to the special properties of the chained Moser isotopy $\phi_t$ we have:
\begin{lem}
\label{lem:linking1}
The graphs $t\mapsto (t,z(t))$ and $t\mapsto (t,z'(t))$ form a positive 2-strand braid.  Intersections in the $x,x'$-braid diagram correspond to positive crossings in the $z,z'$-braid. The linking number is given by
${\rm Link}\bigl(z(t),z'(t)\bigr) =\frac{1}{2} \iota\bigl(x(t),x'(t)\bigr)$.
\end{lem}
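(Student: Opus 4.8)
The plan is to reduce the whole statement to a single assertion about the sign of an individual crossing, and then read off all three conclusions from it. Since each $\phi_t$ is a homeomorphism and $z\neq z'$, the space curves $t\mapsto(t,z(t))$ and $t\mapsto(t,z'(t))$ never meet; because $z,z'$ are $\widehat F$-periodic they close up into two disjoint loops in the solid cylinder $(\rr/n\zz)\times\plane$, so they genuinely form a closed $2$-strand braid. By Lemma~\ref{lem:propergraphs} their projection to the $(t,x)$-plane is an honest braid diagram: the crossings occur exactly at the finitely many times $t^{*}$ with $x(t^{*})=x'(t^{*})$, they are transverse, and there are $\iota\bigl(x(t),x'(t)\bigr)$ of them. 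It therefore suffices to show that all crossings carry the same sign and that this sign is the positive one in the orientation convention of Section~\ref{subsec:discbrinv12}.

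First I would analyse one crossing, at a time $t^{*}\in(t_{j-1},t_{j}]$, writing $x_{j-1}=x(t_{j-1})$, $x'_{j-1}=x'(t_{j-1})$ and $x^{*}=x(t^{*})=x'(t^{*})$. The essential move is to localise to a single interval of the chained Moser isotopy, since $F$ itself is not twist while each elementary step is: by Proposition~\ref{MoserThm} the map $G:=\phi_{t^{*}}\circ\phi_{t_{j-1}}^{-1}$ is a \emph{positive} twist symplectomorphism, hence has a generating function $\h_{G}$ with $\partial_{1}\partial_{2}\h_{G}<0$ and $y_{\mathrm{new}}=\partial_{2}\h_{G}(x_{\mathrm{old}},x_{\mathrm{new}})$. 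As the two strands have not yet met on $[t_{j-1},t^{*})$, their initial positions $x_{j-1},x'_{j-1}$ are distinct, and
\[
y(t^{*})-y'(t^{*})=\partial_{2}\h_{G}(x_{j-1},x^{*})-\partial_{2}\h_{G}(x'_{j-1},x^{*})
\]
has, by $\partial_{1}\partial_{2}\h_{G}<0$, the sign of $x'_{j-1}-x_{j-1}$.

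Next set $w(t)=z(t)-z'(t)=(w_{1},w_{2})$ with $w_{1}=x-x'$ and $w_{2}=y-y'$. On $[t_{j-1},t_{j}]$ the projection to $(t,x)$ is affine by Proposition~\ref{Interpolation2}(iv), so $w_{1}$ is linear and changes sign at $t^{*}$; thus $\dot w_{1}(t^{*})$ also has the sign of $x'_{j-1}-x_{j-1}$. Combining the two computations yields $w_{2}(t^{*})\,\dot w_{1}(t^{*})>0$ at \emph{every} crossing. This says precisely that the difference vector $w$ always pierces the vertical axis $\{w_{1}=0\}$ in the same rotational sense, i.e.\ all crossings of the $z,z'$-braid have the same sign; under the convention of Section~\ref{subsec:discbrinv12} (for which the diagrams $\Bd(\cdot)$ carry only positive intersections) this common sign is positive, which proves the first two assertions.

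Finally I would compute the linking number as the degree of the Gauss map $t\mapsto w(t)/|w(t)|\in\sbb^{1}$ over one period, which for two loops that are graphs over the $t$-circle equals $\mathrm{Link}\bigl(z(t),z'(t)\bigr)$. The uniform inequality $w_{2}\dot w_{1}>0$ forces $w$ to wind monotonically, meeting $\{w_{1}=0\}$ once per braid crossing and always in the same sense; since a full revolution corresponds to two such axis crossings, the winding number is $\tfrac12\iota\bigl(x(t),x'(t)\bigr)$, giving $\mathrm{Link}\bigl(z(t),z'(t)\bigr)=\tfrac12\iota\bigl(x(t),x'(t)\bigr)$. The main obstacle is exactly the localisation step: one must invoke that each arc of the chained Moser isotopy is genuinely positive-twist (Proposition~\ref{MoserThm}), not the non-twist composite $F$, and then verify that the depth/direction correlation is literally uniform across all crossings so that none cancel.
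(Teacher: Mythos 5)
Your proposal is correct, and it runs on the same engine as the paper's proof: Lemma~\ref{lem:propergraphs} supplies the transverse braid diagram, and the sign of each crossing is extracted from the positive twist property of the chained Moser isotopy; the only differences are in packaging, though they are worth recording. The paper works with the Lagrangians $L^j$ of Proposition~\ref{MoserThm}, writing $y(s(\tau))=\partial_pL^j\bigl(s(\tau),x_*,\,\cdot\,\bigr)$ and comparing the slopes of the two straight segments through the crossing via $\partial^2_{pp}L^j\ge\delta>0$; it must then treat interior crossings and anchor-point crossings ($x_j=x_j'$) as two separate cases, matching $\partial_pL^{j-1}$ and $\partial_pL^j$ at the corner in the second case. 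You instead take the generating function $\h_G$ of the \emph{partial-time} map $G=\phi_{t^*}\circ\phi_{t_{j-1}}^{-1}$, which lies in $SV(\plane)$ by Proposition~\ref{MoserThm} (and equals $\widehat F_j$ when $t^*=t_j$), and use monotonicity in the initial position, $\partial_1\partial_2\h_G<0$; this handles both crossing types in one computation and is equivalent to the paper's slope comparison via the Legendre relation between the initial point and the slope at a fixed endpoint. Your explicit reduction ``positive crossing $\Leftrightarrow w_2\dot w_1>0$'' together with the Gauss-map winding count also supplies a proof of ${\rm Link}\bigl(z(t),z'(t)\bigr)=\tfrac12\iota\bigl(x(t),x'(t)\bigr)$, which the paper asserts without argument. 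One small patch is needed: at an anchor crossing $t^*=t_j$ your phrase ``$w_1$ is linear and changes sign at $t^*$'' fails on a single interval, since the sign change occurs across the corner of the piecewise linear graph; but the transversality of Definition~\ref{PL}(c), guaranteed by Lemma~\ref{lem:propergraphs}, gives $(x_{j-1}-x'_{j-1})(x_{j+1}-x'_{j+1})<0$, so both one-sided slopes of $w_1$ at $t_j$ carry the sign of $x'_{j-1}-x_{j-1}$ and your inequality $w_2\dot w_1>0$ holds for either one-sided derivative, so the argument goes through unchanged.
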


\begin{proof}
By Lemma \ref{lem:propergraphs} the projection graphs $t\mapsto x(t)$ and $t\mapsto x'(t)$ form a braid diagram.
In order to show that the graphs $t\mapsto (t,z(t))$ and $t\mapsto (t,z'(t))$ form a positive 2-strand braid we examine the intersections in
the $x,x'$-braid diagram.

(i) Consider an intersection on the interval $[t_j,t_{j+1}]\subset [0,n]$ %$[t_j,t_{j+1}]\subset [0,nd]$
for which $x_j<x'_j$ and $x_{j+1}>x'_{j+1}$.
Let $\tau\in [t_j,t_{j+1}]$ be the intersection point and $x(\tau) = x'(\tau) = x_*$.
After rescaling and shifting to the interval $[0,1]$ we have
%$\phi_t$
%\eqref{eqn:MoserIso} and the rescaling in Proposition \ref{Interpolation2} we have that 
%$\phi_t = \psi_{s,H^j}$, with $s(t) = d(t-t_j)$ and $t\in [t_j,t_{j+1}]$.
%Therefore,
$x(s(t)) = x_j + (x_{j+1}-x_j)s(t)$, $s(t) = d(t-t_j) \in [0,1]$ and the same for $x'(s(t))$.
Recall that $\phi_t$ is given by  
\eqref{eqn:theisotopy} and therefore by \eqref{Ltransform2}, 
\[
y(s(\tau)) = \partial_p L^j\bigl(s(\tau),x_*,x_{j+1}-x_j), \quad y'(s(\tau)) = \partial_pL^j\bigl(s(\tau),x_*,x'_{j+1}-x'_j),
\]
where $L^j$ are the Lagrangians for the Moser isotopies $\psi_{t,H^j}$ in Proposition \ref{MoserThm}.
Since $\partial^2_{pp}L^j\ge \delta>0$ and $x_{j+1}-x_j > x'_{j+1}-x'_j$, we conclude that $y(s(\tau)) > y'(s(\tau))$.
By reversing the role of $x$ and $x'$, i.e.\ $x_j>x'_j$ and $x_{j+1}<x'_{j+1}$, we obtain $y(s(\tau))<y'(s(\tau))$,
which shows that an intersection in the $x,x'$-diagram
corresponds to a positive crossing in the $z,z'$-braid.

(ii) Consider an intersection at $x_j$, for which   $x_{j-1}<x'_{j-1}$, $x_j = x'_j = x_*$ and $x_{j+1}>x'_{j+1}$.
As in the previous case
\[
\begin{aligned}
y(s(\tau)) &= \partial_pL^{j-1} \bigl(1,x_*,x_*-x_{j-1}) = \partial_pL^j\bigl(0,x_*,x_{j+1}-x_*);\\
y'(s(\tau)) &= \partial_pL^{j-1} \bigl(1,x_*,x_*-x'_{j-1}) = \partial_pL^j\bigl(0,x_*,x'_{j+1}-x_*),
\end{aligned}
\]
and since $x_*-x_{j-1}>x_*-x'_{j-1}$ (and $x_{j+1}-x_* > x'_{j+1}-x_*$) we conclude that $y(s(\tau)) >y'(s(\tau))$. Reversing the role of $x$ and $x'$ yields
$y(s(\tau))<y'(s(\tau))$. In this case we also conclude that an intersection in the $x,x'$-diagram
corresponds to a positive crossing in the $z,z'$-braid, which concludes the proof.
\end{proof}

\begin{proof}[Proof of Proposition \ref{prop:traceout1}]
Since $\phi_t$ is an isotopy and $F(B) = \phi_1(B) = B$, the path $t\mapsto \phi_t(B)$ in $\C_m\disc$ represents a geometric braid $\bbeta(t)$.
By Lemma \ref{lem:linking1} all crossings in $\bbeta(t)$ are positive. Indeed, if we consider the $m$-fold cover $\widehat \bbeta(t)$ of $\bbeta$, i.e.\
$t\mapsto \phi_t(B)$, $t\in [0,m]$, then all pairs of strands satisfy the hypotheses of Lemma \ref{lem:linking1}.
Consequently, the presentation of $\beta\in \BB_m$ of $\bbeta$ is unique and consists of only positive letters.
\end{proof}

Our decomposition automatically selects a braid word in $\BB_m^+$ which is positive and which may be represented as a positive
piecewise linear braid diagram.
This allows us to use the theory of parabolic recurrence relations for finding additional periodic points for $F$. 
In the following lemma $\widehat F$ is an extension of $F$ to $\plane$ given by Proposition \ref{Interpolation2}.

\begin{lem}
\label{prop:reallyindisc}
Let $A,B\subset \plane$ be finite, disjoint sets  and   $B \subset\inter\disc$. 
Let $F\in \Symp(\disc)$ with $F(B) = B$ and let 
  $\phi_t$ be a chained Moser isotopy given by Proposition \ref{Interpolation2} with $\phi_1 = \widehat F$. Suppose  
$A$ is an invariant set for $\widehat F$ and $\balpha(t) = \phi_t(A)$ is acylindrical in  $\balpha\rel\bbeta$.
%suppose that,
%
%Let $\balpha \rel \bbeta$ be a proper 2-colored braid given by the chained Moser isotopy in Proposition \ref{Interpolation2} and suppose that
%$B=\bbeta(0)\subset\inter\disc$, i.e. $B$ is an invariant set for $F$. 
%\begin{enumerate}
%\item [(i)] $\pi_x \phi_t(A\cup B)$ is a proper, 2-colored, piecewise linear braid diagram; %$B=\bbeta(0)$ is an invariant set for $F$;
%\item [(ii)] $\balpha(t) = \phi_t(A)$ is acylindrical in  $\balpha\rel\bbeta$. % cannot be deformed onto $\partial \disc$. %$A= \balpha(0)$ is an invariant set for the extension  $\widehat F$.
%\end{enumerate}
 Then, $A$ is an invariant set for $F$ with $A\subset \inter\disc$.
\end{lem}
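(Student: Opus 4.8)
The plan is to reduce the whole statement to the single claim $A\subset\inter\disc$. Indeed, by the construction in Proposition \ref{Interpolation2} we have $\widehat F|_{\disc}=F$, so once $A\subset\inter\disc\subset\disc$ is known we get $F(A)=\widehat F(A)=A$, and $A$ is automatically an invariant set for $F$ lying in $\inter\disc$. First I would record the elementary invariance facts: since $\widehat F(\disc)=F(\disc)=\disc$, the sets $\inter\disc$, $\partial\disc$ and $\plane\setminus\disc$ are each invariant under $\widehat F$ (and, because every $\widehat F_j$ preserves $\disc$, under each $\phi_{t_j}=\widehat F_j\circ\cdots\circ\widehat F_1$ as well). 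Then I would argue by contradiction: assuming $A\not\subset\inter\disc$, put $A':=A\setminus\inter\disc$; by invariance of $\inter\disc$ this is a nonempty $\widehat F$-invariant set, and I let $\balpha'\subset\balpha$ denote the corresponding union of components.

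The key observation is that, although the intermediate maps $\phi_t$ need \emph{not} preserve the standard disc, they tautologically preserve the moving disc $\phi_t(\disc)$. Because $B\subset\inter\disc$ and $A'\subset\plane\setminus\inter\disc$, applying the homeomorphism $\phi_t$ gives, for \emph{every} $t$, that $\bbeta(t)=\phi_t(B)\subset\inter\phi_t(\disc)$ while $\balpha'(t)=\phi_t(A')\subset\plane\setminus\inter\phi_t(\disc)$. Thus the skeletal strands and the $A'$-strands are separated by the moving circle $\phi_t(\partial\disc)=\partial\phi_t(\disc)$ at all times, not merely at the anchor times $t_j$. This is precisely what compensates for the fact that $\phi_t$ drags the disc around inside the strip $[-1,1]\times\rr$ (Proposition \ref{Interpolation2}(v)).

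Next I would deform $\balpha'$ onto the boundary. Using a deformation retraction of the closed exterior $\plane\setminus\inter\phi_t(\disc)$ onto its boundary circle $\phi_t(\partial\disc)$, performed continuously and periodically in $t$, I push the strands of $\balpha'$ onto the moving boundary torus $\mathcal T=\bigcup_t\{t\}\times\phi_t(\partial\disc)$. Throughout this homotopy the $A'$-strands remain in the closed exterior, hence stay disjoint from $\bbeta$ and from the interior components of $\balpha$ (which sit inside the moving disc). Since $\mathcal T$ is the trace of $\partial\disc$ under the isotopy $\phi_t$ with $\phi_0=\id$, undoing the isotopy along the boundary exhibits $\balpha'$ as a closed braid lying on $\partial\disc$; that is, $\balpha'$ is cylindrical in $\balpha\rel\bbeta$ in the sense of Definition \ref{defn:acylindrical} and Remark \ref{rmk:acylindrical}. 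This contradicts the hypothesis that $\balpha$ is acylindrical, i.e.\ that every component of $\balpha$ is acylindrical. Hence $A'=\varnothing$, so $A\subset\inter\disc$, and the lemma follows.

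The hard part will be this third step: verifying that the exterior retraction is a genuine homotopy through closed geometric braids. Disjointness of the $A'$-strands from the skeleton is free, thanks to the separation by $\phi_t(\partial\disc)$, but one must also keep the finitely many $A'$-strands mutually disjoint during the retraction and as they land on the circle. I would arrange this by choosing the retraction generically (perturbing if necessary), so that the distinct periodic points never share an angular projection simultaneously. A secondary point is to confirm that ``lying on the moving boundary torus $\mathcal T$'' really matches the combinatorial notion of cylindricity in Remark \ref{rmk:acylindrical}; this holds because $\mathcal T$ is isotopic, compatibly with the $t$-fibration, to $\sbb^1\times\partial\disc$ via the ambient isotopy $\phi_t$ itself.
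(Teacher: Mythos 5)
Your proposal is correct and follows essentially the same route as the paper's proof: reduce everything to showing $A\subset\inter\disc$ (using $\widehat F|_{\disc}=F$), observe that the moving disc $\phi_t(\disc)$ separates exterior orbit strands from the skeleton $\bbeta(t)$ for \emph{all} $t$, and contradict acylindricity by pushing the exterior strands onto the moving boundary $\partial\phi_t(\disc)$ and straightening back to $\partial\disc$. The paper compresses your third step into the single assertion that $\balpha$ ``can be contracted onto $\partial \phi_t(\disc)$'' (after assuming WLOG a single component), so your extra care about keeping the $A'$-strands disjoint during the retraction merely makes explicit what the paper leaves implicit.
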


\begin{proof}
The set $A = \balpha(0)$ is an invariant set for $\widehat F$. Assume without loss of generality that $\balpha(t)$ is a single component braid.
Let $\balpha^\dagger(t)$ be the $n$-fold cover of $\balpha$, with $t\in [0,n]$. If $\balpha^\dagger(t_j) \in \plane \setminus \inter\disc$, $t_j=j/d$, for some $j$, then
$\balpha^\dagger(t_j) \in \plane \setminus \inter\disc$ for all $j$, since the set $\phi_t(\disc)$ separates the points inside and outside $\partial\disc$ under the isotopy $\phi_t$. Therefore, $\balpha^\dagger(t) \in \plane\setminus\inter \phi_t(\disc)$
for all $t\in [0,n]$ and
thus $\balpha(t) \in  \plane\setminus \inter \phi_t(\disc)$ for all $t\in [0,1]$.
%The same line of reasoning shows that $\balpha(t) \not \in \partial \phi_t(\disc)$ and thus $\balpha(t) \in  \plane\setminus \inter\phi_t(\disc)$ for all $t\in [0,1]$
By assumption $\bbeta(t) \in   \phi_t(\disc)$ for all $t\in [0,1]$
and therefore  that $\balpha$ can be contracted onto $\partial \phi_t(\disc)$, 
which contradicts the assumption that $\balpha \rel\bbeta$ is acylindrical.
Consequently, $\balpha(t) \in   \phi_t(\disc)$ for all $t\in [0,1]$.
The latter implies $A\subset \inter\disc$, which completes the proof.
\end{proof}

%%%%% main theorem %%%%%%%
%%%%%%%%%%

\section{The main theorem}
\label{sec:main1}

%\section{Normal forms}
%\label{sec:normform}
%In order to define discrete braid invariants we discuss the normal forms for braid words in $\BB_m$ and $\BB_{n+m}$
%and establish normal forms for words in $\BB_{n,m}$.

%\subsection{Proof of the main theorem}
%For a given 2-colored braid word $(\gamma,\aset)$ we choose a 2-colored braid  $(\bgamma,\aset) \in  \ii_{n+m}(\gamma) \in \BBB_{n+m}\disc$.
%We say that a 2-colored braid class $\llb\gamma,\aset\rrb$ is \emph{cyclic} if there exists a \emph{red} component of $\bgamma$
%that is contractible onto $\partial\disc$, otherwise $\llb\gamma,\aset\rrb$ is said to be \emph{non-cyclic}.

Given $F\in \Symp(\disc)$ and  a finite invariant set $B\subset \inter \disc$,
then 
\begin{equation}
\label{eqn:detMP}
\jmath_B([F]) = \beta\!\! \!\!\mod\square \in \BB_m/Z(\BB_m).
\end{equation}
%By Proposition \ref{prop:traceout1} determines  the mapping class of $F$ relative to $B$.
%To be more precise  the mapping class $[F]$ is represented by a positive element in $\mathcal{B}_{m}/Z(\mathcal{B}_{m})$, where $m= \# B$.
%Elements in $\mathcal{B}_{m}/Z(\mathcal{B}_{m})$ are braid words that are equal modulo full twists.
%
%For given finite invariant set $B\subset \inter\disc$  we prove the following
%forcing theorem for diffeomorphisms $F \in \Symp(\disc)$.
The braid word $\beta$ can chosen positive by adding full twists and can be used to force additional invariant sets as quantified by the  following result.

\begin{thm}
\label{thm:main1}
%Let $F\in \Symp(\disc)$ and let
Let $B\subset \inter\disc$ be a finite invariant set for a symplectomorphism $F\colon \disc\to \disc$ %$\in \Symp(\disc\rel B)$~
and  let $\beta\in
\BB_m^+$  be  a positive braid word representing the mapping class of $F$ in $\Mod(\disc\rel B)$.
Consider a coloring $\aset\subset \{1,\cdots,n+m\}$ of length $n$ and a 2-colored braid $(\gamma,\aset) \in \pi_\aset^{-1}(\beta) \subset \BB^+_{n,m,\aset}$
and assume that $\llb\gamma,\aset\rrb$ is proper and acylindrical,\footnote{See Definition \ref{defn:proper2} and Remark \ref{rmk:acylindrical}.}
and $\HHH\llb\gamma,\aset\rrb\not = 0$.
%
%Let $\llb\gamma,\aset\rrb$, with $(\gamma,\aset)\mapsto \beta$, represent  a proper, non-cyclic, 2-colored braid class such that
%$\HHH\llb\gamma,\aset\rrb\not = 0$. 
%
Then, %(not necessarily positive).
\begin{enumerate}
\item [(i)] there exists a finite invariant set $A\subset \inter\disc$ of $F$
%with $A\cap B=\varnothing$, 
such that the mapping class of $F$ satisfies 
$\jmath_{A\cup B}([F]) = \gamma\!\!\mod \square$; %\in \Conf_{n,m}^{d+q}$;
%the mapping class of $F$ in $\Mod(\disc\rel A\cup B)$ is represented by $\alpha\rel \beta$;
\item [(ii)] the number of distinct invariants sets $A$ in (i) is bounded from below by $|P_t\llb\gamma,\aset\rrb|$.
\end{enumerate}
\end{thm}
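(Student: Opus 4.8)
The plan is to turn the forcing problem into the study of zeros of a parabolic recurrence relation and then read off both the existence and the number of invariant sets from the nontriviality of the braid Conley index. First I would invoke Proposition~\ref{Interpolation2} to extend $F$ to $\widehat F = \widehat F_d\circ\cdots\circ\widehat F_1 \in \Symp(\plane)$ along a chained Moser isotopy $\phi_t$, each factor $\widehat F_j \in SV(\plane)$ being a positive twist with generating function $\h_j$. The associated parabolic recurrence relation $\RR = \{\RR_j\}$ of \eqref{eqn:parabrec} encodes periodic points variationally: by Proposition~\ref{prop:varprin2} the $d$-periodic configurations of $\{\widehat F_j\}$ are exactly the critical points of the action $W_d$ of \eqref{eqn:action2}, equivalently the zeros of $\RR$. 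By Proposition~\ref{prop:traceout1} the $x$-projection of $\phi_t(B)$ is a positive discretized skeleton $\bb$ whose braid word is $\beta$, and, $B$ being $F$-invariant, $\bb$ is a zero of $\RR$.

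Next I would mount the prescribed topological type onto this dynamical skeleton. Choosing $(\gamma,\aset)\in \pi_\aset^{-1}(\beta)$ means exactly that the skeletal (black) part of $\gamma$ is $\beta$, so after passing to a large number of discretization points through the extension $\E$ and the evaluation $\ev_q$ --- Proposition~\ref{prop:free} securing freeness once $d>|\ba\sqcup\bb|$ --- any representative of $\llb\gamma,\aset\rrb$ is realized as a $2$-colored braid $\ba\rel\bb$ whose black strands are the skeleton above. The decisive point is that the auxiliary fixed points $z_\pm=(\pm2,0)$ and period-$2$ points $z'_\pm=(\pm4,0)$ furnished by conditions (vi)--(vii) of Proposition~\ref{Interpolation2} are themselves zeros of $\RR$ and supply precisely the augmentation strands of $\A\bb=\bb^*$ (and of $\B\bb=\bb^\#$); adjoining them renders the fiber $[\ba]\rel\bb^*$ bounded in $\rr^{nd}$. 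Since $\llb\gamma,\aset\rrb$ is proper, the closure $N$ of this bounded fiber is an isolating neighborhood for \eqref{parabolicvectorfield}, and by the stabilization and augmentation invariance recalled in Section~\ref{subsec:discbrinv12} its homotopy Conley index is $\HHH(\ba\rel\bb^*)=\HHH\llb\gamma,\aset\rrb$, cf.\ Definition~\ref{defn:discrbrinv}.

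Part (i) then follows quickly. As $\HHH\llb\gamma,\aset\rrb\neq0$ the index of $N$ is nontrivial, so $\Inv(N,\RR)\neq\varnothing$; any point of it is a zero $\ba^*$ of $\RR$ in the fiber, hence a critical point of $W_d$, which by Proposition~\ref{prop:varprin2} integrates to a $d$-periodic configuration and thus a finite invariant set $A$ of $\widehat F$ whose trace $\balpha(t)=\phi_t(A)$ lies in the class $[\ba\rel\bb]$; in particular $\jmath_{A\cup B}([F]) = \gamma\bmod\square$. Acylindricity of $\llb\gamma,\aset\rrb$ lets me apply Lemma~\ref{prop:reallyindisc} to $\balpha\rel\bbeta$, which forces $A\subset\inter\disc$, so that $A$ is an invariant set of the original $F$ and not merely of its planar extension $\widehat F$.

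For part (ii) I would run the Conley--Morse counting on $N$. If the class carries infinitely many invariant sets the bound is vacuous, so assume finitely many; they are then isolated critical points of $W_d$. Because the braid Conley index is independent of the admissible recurrence relation realizing $\bb^*$, I may choose $\RR$ generically so that every stationary braid in the fiber is nondegenerate and hence carries local index a single sphere $\sbb^{k}$. The Morse inequalities then produce at least $\dim H_k\bigl(\HHH\llb\gamma,\aset\rrb\bigr)$ stationary braids of index $k$; since stationary braids of distinct index are distinct, their number is at least the count of homological degrees in which the index is nonzero, namely $|P_t\llb\gamma,\aset\rrb|$, and each corresponds by the variational principle to a distinct invariant set $A$. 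The main obstacle is the second step: reconciling the \emph{dynamically given} skeleton $\bb$, together with the extra strands it is forced to carry, with the \emph{topologically prescribed} type $\llb\gamma,\aset\rrb$ while simultaneously securing boundedness of the fiber; this is exactly where the built-in orbits $z_\pm,z'_\pm$ and the $d$-independence of $\HHH$ do the essential work.
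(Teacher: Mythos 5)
Your overall variational frame is the right one---Moser interpolation (Proposition~\ref{Interpolation2}), parabolic recurrence relations, nontriviality of the Conley index producing critical points of $W_d$, and acylindricity plus Lemma~\ref{prop:reallyindisc} to confine $A$ to $\inter\disc$---but there is a genuine gap at precisely the step you yourself flag as the ``main obstacle,'' and it is not closed by the tools you name. The word $\beta$ represents $[F]$ in $\Mod(\disc\rel B)$ only modulo full twists, and the chained Moser isotopy traces out a \emph{specific} skeleton $\bb_B=\bigl\{\pi_x\phi_{t_j}(B)\bigr\}$ whose braid word agrees with $\beta$ only mod $\square$: the interspersed rotations $\Psi,\Psi'$ insert a number of full twists depending on $\ell,k,\kappa$. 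The extension operator $\E$ merely concatenates trivial periods and can neither create nor absorb a full twist, so your claim that after $\E$-stabilization ``any representative of $\llb\gamma,\aset\rrb$ is realized as a $2$-colored braid whose black strands are the skeleton above'' fails whenever $\bb\possim\T^{2\lambda}\bb_B$ or $\T^{2\lambda}\bb\possim\bb_B$ with $\lambda\ge1$. The paper's proof is a three-case analysis on exactly this mismatch: if the prescribed braid carries $\lambda$ \emph{more} full twists than the dynamical skeleton, the isotopy is enlarged with $r=\lambda$ full-rotation blocks via Remark~\ref{rem:extendedMI}; if the dynamics carries \emph{excess} twists---which cannot be undone---one instead adds a $\rho$-block of $\Upsilon$'s with $\ell_\rho=2\lambda+2$, augments by the oscillating strands $\bb^\#$ built from the period-$2$ points $z'_\pm$ (this, not boundedness, is the actual role of the $\#$-augmentation here), and invokes the duality/suspension identity \eqref{eqn:thedual} to conclude $\HHH\bigl(\ba_B\rel\widetilde\bb_B^{*\#}\bigr)=\HHH\llb\gamma,\aset\rrb\wedge\sbb^{2n(\lambda+1)}\neq0$. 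Without this machinery the index you compute on the fiber over the dynamical skeleton is simply not $\HHH\llb\gamma,\aset\rrb$, and the argument for (i) and (ii) does not go through.

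A secondary, smaller issue concerns (ii): the paper does not perturb $\RR$ to a generic nondegenerate situation but cites the Morse theory of \cite[Lemma 35]{BraidConleyIndex}, which bounds the number of (possibly degenerate) stationary braids below by $|P_t\llb\gamma,\aset\rrb|$ directly. Your genericity step would have to be carried out within the class of parabolic recurrence relations satisfying $\RR(\bb^*)=0$---you cannot move the skeleton while perturbing---and the sketch offers no justification that such a perturbation exists; it is in any case unnecessary given the cited lemma.
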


\begin{proof}
Recall from Section \ref{subsec:algpres} that 
$\llb\gamma,\aset\rrb$ determines a 2-colored discretized braid class $[\ba\rel\bb]_{\possim}$ via
the mapping $\gamma \mapsto \ev_0(\gamma) \in \Conf_{n+m}^{d_0}$, with $d_0$ the number of generators in $\gamma$. 
The representative $\ba \rel \bb\in \Conf_{n,m}^{d_0}$ is obtained from the coloring $\aset$.
%
%
%
%We start with the invariant $\HHH\llb\gamma,\aset\rrb \not = 0$, and 
%%Let $\gamma=\alpha\rel\beta$ and let $\gamma_0 = \bar\epsilon_0\square^{\lambda_0}$ 
%%be a minimal normal form.
%denote the associated 2-colored discrete braid class  by $[\ba\rel\bb]$. %, cf.\ Section \ref{subsec:invariants}.
%Then, $\HHH(\ba\rel\bb^*) \not = 0 $ and 
% $P_t\bigl(H_*(\HHH(\ba\rel\bb^*)\bigr)= P_t\bigl(H_*(\HHH\llbracket\alpha\rel\beta\rrbracket\bigr)$.

Consider a chained Moser isotopy $\phi_t$ with $r=\rho=0$, then 
\[
\imath_B([\phi_t(B)])\!\!\!\! \mod \square= \beta,
\]
where $\ell\ge 3$ and $k$ in Proposition \ref{Interpolation2} are fixed. 
Let $\bb_B := \bigl\{ \pi_x \phi_{t_j}(B)\bigr\}$, $j=0,\cdots,d$ and 
choose $\kappa$ large enough such that $[\bb_B]$ defines a free discrete braid class. % and such that
%and large enough such that $\E^q\bb \in \Conf_m^d$ for some $q\ge 1$.\footnote{The fact that $q\ge1$ ensures that $[\E^q\bb]$ is a free discrete braid class.}
The following
 three cases can be distinguished:
\[ 
\bb \possim \bb_B,\quad \bb\possim \T^{2\lambda} \bb_B,\quad\hbox{and}\quad \T^{2\lambda} \bb \possim \bb_B,
\] 
%\[
%\E^q\bb^*\sim \bb_B^*,\quad \E^{q+2\lambda}\bb^* \sim (\T^{2\lambda} \bb_B)^*, \quad \T^{2\lambda}(\E^q\bb^*)\sim \T^2\bb_B\cup \bb_B^\diamond,
%\] 
for some $\lambda\ge 1$. The choices of $\ell$, $k$ and $\kappa$ determine $d$.

 {\it Case I:} 
 The integers $q\ge 0$ and  $\kappa$ are chosen large enough such that $\E^q\bb^*\sim \bb_B^*$ and $[\E^q(\ba\rel\bb^*)]$ is free,
 where $\bb_B^* = \bb_B \cup \bb_B^-\cup \bb_B^+$, with $\bb_B^\pm = \{\pm 2\}$. Note that $\bb_B^* = \bigl\{ \pi_x \phi_{t_j}(B^*)\bigr\}$,
 with $B^* = B\cup \{-2,+2\}$.
 Choose $\ba_B\rel\bb_B^* \sim \E^q(\ba\rel\bb^*)$, cf.\ Figure \ref{fig:7C1},  then by the invariance of the discrete braid invariant we have that
\[
\HHH(\ba_B\rel\bb_B^*) = \HHH\bigl(\E^q(\ba\rel\bb^*)\bigr) = \HHH(\ba\rel\bb^*) \not = 0,
\]
which proves that $P_t(\ba_B\rel\bb_B^*) = P_t(\ba\rel\bb^*) =  P_t\llb\gamma,\aset\rrb\not = 0$.

From the Morse Theory in \cite[Lemma 35]{BraidConleyIndex} we derive that the number of stationary 
discrete braid diagrams $\ba_B\rel\bb_B$ for the action $W_d$ in \eqref{eqn:action1} induced by the Moser isotopy $\phi_t$ 
is bounded below by $|P_t\llb\gamma,\aset\rrb|$.
The associated piecewise linear braid diagrams $\Bd(\ba_B\rel\bb_B)$ %$\ba_B(t)\rel\bb_B(t)$ 
lift to 2-colored braids $\balpha_B(t)\rel\bbeta_B(t)$ by
constructing the $y$-component via \eqref{eqn:MoserIso}.
The stationary braids $\balpha_B(t)\rel\bbeta_B(t)$ are in fact braids in $\disc$ by Proposition \ref{prop:reallyindisc} and
$A=\balpha_B(0) \subset \inter\disc$ is an invariant set for $F$. Moreover, $\balpha_B(t)\rel\bbeta_B(t)$
determines the mapping class of $F$, which completes the proof in Case I.
%%%%

%%%
\begin{figure}
\centering
\begin{tikzpicture}[xscale=0.23, yscale=0.23, line width = 1.5pt]
\foreach \x in {-2,8,16}
 \draw[line width=1.0pt, -] (\x,-3) -- (\x,7); 

 \draw[line width=1.0pt,decorate,decoration={brace,amplitude=8pt}] (16.0,-3.5) -- (8.1,-3.5) node [black,midway,yshift=-0.5cm] {\footnotesize $q$};
 \draw[line width=1.0pt,decorate,decoration={brace,amplitude=8pt}] (7.9,-3.5) -- (-2.,-3.5) node [black,midway,yshift=-0.5cm] {\footnotesize $d_0$};
 \draw[line width=1.0pt,decorate,decoration={brace,amplitude=8pt}] (-2,7.5) -- (16.,7.5) node [black,midway,yshift=0.5cm] {\footnotesize $\E^q(\ba\rel\bb^*)$};
   
   \draw[-] (-2,2) -- (0,0) -- (2,0) -- (4,2) -- (6,4) -- (8,4) -- (10,4) -- (12,4) -- (14,4) -- (16,4);            
   \draw[fill] (-2,2) circle (0.07)
               (0,0) circle (0.07)
               (2,0) circle (0.07)
               (4,2) circle (0.07)
               (6,4) circle (0.07);

\foreach \x in {8,10,12,14,16}
   \draw[fill] (\x,4) circle (0.07);

   \draw[-] (-2,4) -- (0,4) -- (2,2) -- (4,0) -- (6,0) -- (8,2) -- (10,2) -- (12,2) -- (14,2) -- (16,2);            
   \draw[fill] (0,4) circle (0.07)
               (2,2) circle (0.07)
               (4,0) circle (0.07)
               (6,0) circle (0.07)
               (-2,4) circle (0.07);

\foreach \x in {8,10,12,14,16}
   \draw[fill] (\x,2) circle (0.07);

   \draw[-,color=red] (-2,0) -- (0,2) -- (2,4) -- (4,4) -- (6,2) -- (8,0) -- (10,0) -- (12,0) -- (14,0) -- (16,0);            
   \draw[fill, color=red]  (0,2) circle (0.07)
                           (2,4) circle (0.07)
                           (4,4) circle (0.07)
                           (6,2) circle (0.07)
                           (-2,0) circle (0.07);

\foreach \x in {8,10,12,14,16}
   \draw[fill, color=red] (\x,0) circle (0.07);

\foreach \y in {-2,6}
 \draw[-] (-2,\y) -- (16,\y);

\foreach \x in {-1,...,8}
   \draw[fill] (2*\x,-2) circle (0.07);

\foreach \x in {-1,...,8}
   \draw[fill] (2*\x,6) circle (0.07);

\end{tikzpicture}
\raisebox{55pt}{
\begin{minipage}{0.05\linewidth}
  \centering \Large $\sim$
\end{minipage}}
\begin{tikzpicture}[xscale=0.23, yscale=0.23, line width = 1.5pt]

\draw[line width=1.0pt,decorate,decoration={brace,amplitude=8pt}] (16,-3.5) -- (-2.,-3.5) node [black,midway,yshift=-0.5cm] {\footnotesize $d$};

\draw[line width=1.0pt,decorate,decoration={brace,amplitude=8pt}] (-2,7.5) -- (16.,7.5) node [black,midway,yshift=0.5cm] {\footnotesize $\ba_B\rel\bb_B^*$};

\foreach \x in {-2,16}
 \draw[line width=1.0pt, -] (\x,-3) -- (\x,7); 

\draw[-] (-2,2) -- (0,2.5) -- (2,2.9) -- (4,2.6) -- (6,2.7) -- (8,4) -- (10,4.7) -- (12,4.5) -- (14,3.8) -- (16,4);            
\foreach \point in {(-2,2), (0,2.5), (2,2.9), (4,2.6), (6,2.7), (8,4), (10,4.7), (12,4.5), (14,3.8), (16,4)}
   \draw[fill] \point circle (0.07);
  
\draw[-] (-2,4) -- (0,4.2) -- (2,4) -- (4,3.7) -- (6,3.5) -- (8,3.4) -- (10,3) -- (12,2.8) -- (14,2.2) -- (16,2);            
\foreach \point in {(-2,4), (0,4.2), (2,4), (4,3.7), (6,3.5), (8,3.4), (10,3), (12,2.8), (14,2.2), (16,2)}
   \draw[fill] \point circle (0.07);

\draw[-,color=red] (-2,0) -- (0,0.5) -- (2,2) -- (4,4) -- (6,5) -- (8,5) -- (10,4) -- (12,2) -- (14,1) -- (16,0);            
\foreach \point in {(-2,0), (0,0.5), (2,2), (4,4), (6,5), (8,5), (10,4), (12,2), (14,1), (16,0)}
   \draw[fill, color=red] \point circle (0.07);

\foreach \y in {-2,6}
 \draw[-] (-2,\y) -- (16,\y);

\foreach \x in {-1,...,8}
   \draw[fill] (2*\x,-2) circle (0.07);

\foreach \x in {-1,...,8}
   \draw[fill] (2*\x,6) circle (0.07);

\end{tikzpicture}
\caption{Two representatives for Case I. Numbers of discretization points are linked by $d=d_0+q$.}
\label{fig:7C1}
\end{figure}
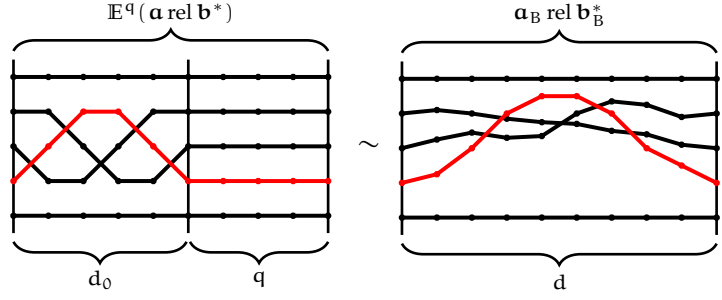

%%%

 {\it Case II:}
 By Remark \ref{rem:extendedMI} we
%The discrete braids $\T^{2\lambda} \bb_B$  and $(\T^{2\lambda} \bb_B)^*$ are obtained via the above construction by
 use a(n) (extended) chained Moser isotopy,  denoted by $\widetilde\phi_t$, with $r=\lambda$, $\ell_r\ge 3$ and $\rho=0$
 and we denote the associated discrete braid by $\widetilde \bb_B:= \bigl\{ \pi_x \widetilde\phi_{t_j}(B)\bigr\}$.
 By construction $\widetilde \bb_B \possim \T^{2\lambda}\bb_B \possim \bb$.
  The integers $q\ge 0$ and  $\kappa$ are chosen large enough such that $\E^{q+2\lambda}\bb^* \sim \widetilde \bb_B^*$ and $[\E^{2\lambda+q}(\ba\rel\bb^*)]$ is free,  where $\widetilde \bb_B^* = \widetilde \bb_B \cup \bb_B^-\cup \bb_B^+ = \bigl\{ \pi_x \widetilde \phi_{t_j}(B^*)\bigr\}$.
As in Case I we choose $\ba_B\rel \widetilde\bb_B^*\sim \E^{2\lambda+q}(\ba\rel\bb^*)$, cf.\ Figure \ref{fig:7C2}, such that 
\[
\HHH\bigl(\ba_B\rel \widetilde \bb_B^*\bigr) = \HHH\bigl( \E^{2\lambda+q}(\ba\rel\bb^*)\bigr) = \HHH(\ba\rel\bb^*) \not = 0.
\]
This proves the theorem in Case II.
%%%%%%
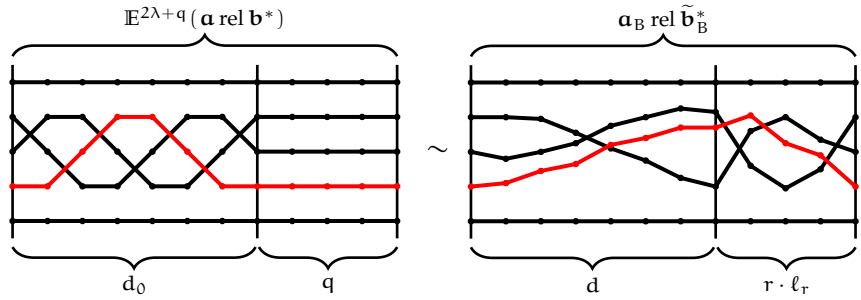
\begin{figure}
  \centering
\begin{tikzpicture}[xscale=0.23, yscale=0.23, line width = 1.5pt]

\foreach \x in {-4,10,18}
 \draw[line width=1.0pt, -] (\x,-3) -- (\x,7); 

\draw[-] (-4,4) -- (-2,2) -- (0,0) -- (2,0) -- (4,2) -- (6,4) -- (8,4) -- (10,2) -- (12,2) -- (14,2) -- (16,2) -- (18,2);            
\foreach \point in {(-4,4), (-2,2), (0,0), (2,0), (4,2), (6,4), (8,4), (10,2), (12,2), (14,2), (16,2), (18,2)}
   \draw[fill] \point circle (0.07);

\draw[-] (-4, 2) -- (-2,4) -- (0,4) -- (2,2) -- (4,0) -- (6,0) -- (8,2) -- (10,4) -- (12,4) -- (14,4) -- (16,4) -- (18,4);            
\foreach \point in {(-4, 2), (-2,4), (0,4), (2,2), (4,0), (6,0), (8,2), (10,4), (12,4), (14,4), (16,4), (18,4)}
   \draw[fill] \point circle (0.07);

\draw[-,color=red] (-4,0) -- (-2,0) -- (0,2) -- (2,4) -- (4,4) -- (6,2) -- (8,0) -- (10,0) -- (12,0) -- (14,0) -- (16,0) -- (18,0);            
\foreach \point in {(-4,0), (-2,0), (0,2), (2,4), (4,4), (6,2), (8,0), (10,0), (12,0), (14,0), (16,0), (18,0)}
   \draw[fill, color=red] \point circle (0.07);

 \draw[line width=1.0pt,decorate,decoration={brace,amplitude=8pt}] (18.0,-3.5) -- (10.1,-3.5) node [black,midway,yshift=-0.5cm] {\footnotesize $q$};
 \draw[line width=1.0pt,decorate,decoration={brace,amplitude=8pt}] (9.9,-3.5) -- (-4.,-3.5) node [black,midway,yshift=-0.5cm] {\footnotesize $d_0$};
 \draw[line width=1.0pt,decorate,decoration={brace,amplitude=8pt}] (-4,7.5) -- (18.,7.5) node [black,midway,yshift=0.5cm] {\footnotesize $\E^{2\lambda+q}(\ba\rel\bb^*)$};

\foreach \y in {-2,6}
 \draw[-] (-4,\y) -- (18,\y);

\foreach \x in {-2,...,9}
   \draw[fill] (2*\x,-2) circle (0.07);

\foreach \x in {-2,...,9}
   \draw[fill] (2*\x,6) circle (0.07);
\end{tikzpicture}
\raisebox{55pt}{
\begin{minipage}{0.05\linewidth}
  \centering \Large $\sim$
\end{minipage}}
\begin{tikzpicture}[xscale=0.23, yscale=0.23, line width = 1.5pt]

\foreach \x in {-4,10,18}
 \draw[line width=1.0pt, -] (\x,-3) -- (\x,7); 

\draw[-] (-4,2) -- (-2,1.6) -- (0,2.) -- (2,2.5) -- (4,3.5) -- (6,4.) -- (8,4.5) -- (10,4.3) -- (12,1.2) -- (14,-0.1) -- (16,1.0) -- (18,4);            
\foreach \point in {(-4,2), (-2,1.6), (0,2.), (2,2.5), (4,3.5), (6,4.), (8,4.5), (10,4.3), (12,1.2), (14,-0.1), (16,1.0), (18,4)}
   \draw[fill] \point circle (0.07);
  
\draw[-] (-4,4) -- (-2,4) -- (0,3.9) -- (2,3.1) -- (4,2.2) -- (6,1.5) -- (8,0.5) -- (10,0) -- (12,3.2) -- (14,4.0) -- (16,2.7) -- (18,2);            
\foreach \point in {(-4,4), (-2,4), (0,3.9), (2,3.1), (4,2.2), (6,1.5), (8,0.5), (10,0), (12,3.2), (14,4.0), (16,2.7), (18,2)}
   \draw[fill] \point circle (0.07);

\draw[-,color=red] (-4,0) -- (-2,0.2) -- (0,0.9) -- (2,1.3) -- (4,2.4) -- (6,2.8) -- (8,3.4) -- (10,3.4) -- (12,4.1) -- (14,2.5) -- (16,1.8) -- (18,0);            
\foreach \point in {(-4,0), (-2,0.2), (0,0.9), (2,1.3), (4,2.4), (6,2.8), (8,3.4), (10,3.4), (12,4.1), (14,2.5), (16,1.8), (18,0)}
   \draw[fill, color=red] \point circle (0.07);

 \draw[line width=1.0pt,decorate,decoration={brace,amplitude=8pt}] (18.0,-3.5) -- (10.1,-3.5) node [black,midway,yshift=-0.5cm] {\footnotesize $r\cdot \ell_r$};
 \draw[line width=1.0pt,decorate,decoration={brace,amplitude=8pt}] (9.9,-3.5) -- (-4.,-3.5) node [black,midway,yshift=-0.5cm] {\footnotesize $d$};
 \draw[line width=1.0pt,decorate,decoration={brace,amplitude=8pt}] (-4,7.5) -- (18.,7.5) node [black,midway,yshift=0.5cm] {\footnotesize $\ba_B\rel \widetilde \bb_B^{*}$};

\foreach \y in {-2,6}
 \draw[-] (-4,\y) -- (18,\y);

\foreach \x in {-2,...,9}
   \draw[fill] (2*\x,-2) circle (0.07);

\foreach \x in {-2,...,9}
   \draw[fill] (2*\x,6) circle (0.07);

\end{tikzpicture}
\caption{Two representatives for Case II. Numbers of discretization points are linked by $ d+r \ell_r = d_0+q$.}
\label{fig:7C2}
\end{figure}

%%%%%%% 

{\it Case III:} 
 By Remark \ref{rem:extendedMI} we
%The discrete braids $\T^2\bb_B\cup \bb_B^\diamond$ and $(\T^2\bb_B\cup \bb_B^\diamond)^\#$ are obtained by 
use a(n) (extended) chained Moser isotopy, denoted by $\widetilde\phi_t$, with $r=0$ and $\rho=1$ and $\ell_\rho = 2\lambda+2\ge 4$.
We denote the associated discrete braid by $\widetilde \bb_B:= \bigl\{ \pi_x \widetilde\phi_{t_j}(B)\bigr\}$.
 By construction $\widetilde \bb_B \possim \T^{2}\bb_B \possim \T^{2\lambda+2}\bb$.
The integers $q\ge 0$ and  $\kappa$ are chosen large enough such that $\T^{2\lambda+2}(\E^q\bb^*)\sim \widetilde \bb_B^*$ and $[\T^{2\lambda+2}\E^{q}(\ba\rel\bb^*)]$ is free, and where  $\bb_B^* = \bigl\{ \pi_x \widetilde \phi_{t_j}(B^*)\bigr\}$.
% The latter is interpreted as $\lambda$ full twist composed to the constant strands $\{\pm 2\}$ such that the number of discretization points
% equals $d+2$.
Define $\widetilde \bb_B^{*\#} = \bigl\{ \pi_x \widetilde \phi_{t_j}(B^{*\#})\bigr\}$, with $B^{*\#} = B\cup \{-4,-2,2,4\}$, which is equivalent to
augmentating $\widetilde \bb_B^*$ with the strands $\bb_B^s = \{(-1)^{j+1}4\}_j$ and $\bb_B^n = \{(-1)^j 4\}_j$.
Choose $\ba_B \rel \widetilde \bb_B^{*} \sim \T^{2\lambda+2}\E^q(\ba\rel\bb^*)$.
From \eqref{eqn:thedual} and the fact that $\E^q(\ba\rel\bb) = \E^q\ba\rel\E^q\bb$ and 
$\T^{2\lambda+2}\bigl(\E^q\ba\rel (\E^q\bb^*)^\#\bigr) \sim \ba_B \rel \widetilde \bb_B^{*\#}$, cf.\ Figure \ref{fig:7C3},
we derive that
\[
\begin{aligned}
0\not = \HHH(\ba &~\rel\bb^*) \wedge  \sbb^{2n(\lambda+1)} = \\
& = \HHH\bigl(\E^q(\ba\rel\bb^*)\bigr) \wedge \sbb^{2n(\lambda+1)}  % \HHH\bigl(\E^q\ba\rel\E^q\bb^*\bigr) \wedge \sbb^{2n\lambda}\\
= \HHH\bigl(\E^q\ba\rel(\E^q\bb^*)^\#\bigr) \wedge \sbb^{2n(\lambda+1)}\\
&= \HHH\bigl(\T^{2\lambda+2}\bigl(\E^q\ba\rel (\E^q\bb^*)^\#\bigr)\bigr)  =\HHH\bigl(\ba_B \rel \widetilde \bb_B^{*\#} \bigr),
%= \HHH\bigl(\T^{2\lambda+2}\E^q\ba\rel (\T^{2\lambda+2}\E^q\bb^*)^\#\bigr)\bigr)\\
%&=\HHH\bigl(\ba_B \rel \widetilde \bb_B^{*\#} \bigr) \not = 0.
\end{aligned}
\]
which implies that $\HHH\bigl(\ba_B \rel \widetilde \bb_B^{*\#} \bigr) \not = 0$.
As in the previous case the non-triviality of the  braid invariant proves the theorem in Case III
since the braid class $[\ba_B \rel \widetilde \bb_B^{*\#}]$ is free, bounded and proper and stationary braids of $W_d$ in $[\ba_B \rel \widetilde \bb_B^{*\#}]$ 
yield invariant sets $A\subset \inter \disc$ by Proposition \ref{prop:reallyindisc}.
\end{proof}

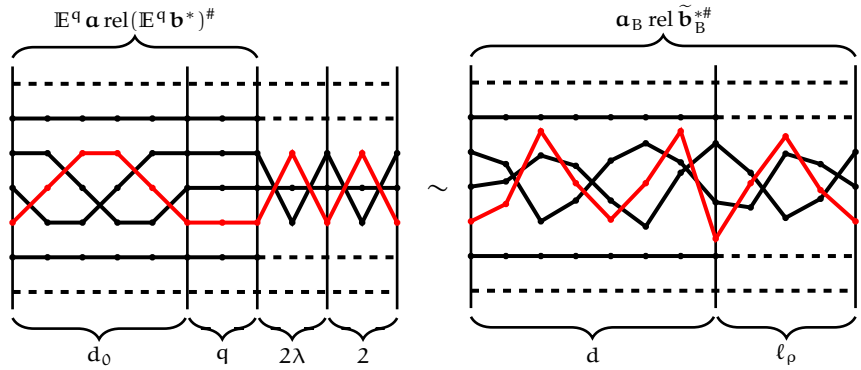
\begin{figure}[hbt]
\centering
\begin{tikzpicture}[xscale=0.23, yscale=0.23, line width = 1.5pt]

\foreach \x in {-2,8,12,16,20}
 \draw[line width=1.0pt, -] (\x,-5) -- (\x,9); 

\draw[-] (-2,2) -- (0,0) -- (2,0) -- (4,2) -- (6,4) -- (8,4) -- (10,4) -- (12,4) -- (14,0) -- (16,4) -- (18,0) -- (20,4);            
\foreach \point in {(-2,2), (0,0), (2,0), (4,2), (6,4), (8,4), (10,4), (12,4), (14,0), (16,4), (18,0), (20,4)}
   \draw[fill] \point circle (0.07);

\draw[-] (-2,4) -- (0,4) -- (2,2) -- (4,0) -- (6,0) -- (8,2) -- (10,2) -- (12,2) -- (14,2) -- (16,2) -- (18,2) -- (20,2);            
\foreach \point in {(-2,4), (0,4), (2,2), (4,0), (6,0), (8,2), (10,2), (12,2), (14,2), (16,2), (18,2), (20,2)}
   \draw[fill] \point circle (0.07);

\draw[-,color=red] (-2,0) -- (0,2) -- (2,4) -- (4,4) -- (6,2) -- (8,0) -- (10,0) -- (12,0) -- (14,4) -- (16,0) -- (18,4) -- (20,0);            
\foreach \point in {(-2,0), (0,2), (2,4), (4,4), (6,2), (8,0), (10,0), (12,0), (14,4), (16,0), (18,4), (20,0)}
   \draw[fill, color=red] \point circle (0.07);

 \draw[line width=1.0pt,decorate,decoration={brace,amplitude=8pt}] (7.9,-5.5) -- (-2.,-5.5) node [black,midway,yshift=-0.5cm] {\footnotesize $d_0$};
 \draw[line width=1.0pt,decorate,decoration={brace,amplitude=8pt}] (11.9,-5.5) -- (8.1,-5.5) node [black,midway,yshift=-0.5cm] {\footnotesize $q$};
 \draw[line width=1.0pt,decorate,decoration={brace,amplitude=8pt}] (15.9,-5.5) -- (12.1,-5.5) node [black,midway,yshift=-0.5cm] {\footnotesize $2\lambda$};
 \draw[line width=1.0pt,decorate,decoration={brace,amplitude=8pt}] (20,-5.5) -- (16.1,-5.5) node [black,midway,yshift=-0.5cm] {\footnotesize $2$};
 \draw[line width=1.0pt,decorate,decoration={brace,amplitude=8pt}] (-2,9.5) -- (12.,9.5) node [black,midway,yshift=0.5cm] {\footnotesize $\E^q\ba\rel(\E^q\bb^*)^\#$};
   
\foreach \y in {-2,6}
 \draw[-] (-2,\y) -- (12,\y);

\foreach \x in {-1,...,6}
   \draw[fill] (2*\x,-2) circle (0.07);

\foreach \x in {-1,...,6}
   \draw[fill] (2*\x,6) circle (0.07);

\foreach \y in {-2,6}
 \draw[-, dashed] (12,\y) -- (20,\y);

\foreach \y in {-4,8}
 \draw[-, dashed] (-2,\y) -- (20,\y);

\end{tikzpicture}
\raisebox{67pt}{
\begin{minipage}{0.05\linewidth}
  \centering \Large $\sim$
\end{minipage}}
\begin{tikzpicture}[xscale=0.23, yscale=0.23, line width = 1.5pt]

\foreach \x in {-2,12,20}
 \draw[line width=1.0pt, -] (\x,-5) -- (\x,9); 

\draw[-] (-2,2) -- (0,2.3) -- (2,3.8) -- (4,3.2) -- (6,1.2) -- (8,-0.3) -- (10,2.8) -- (12,4.5) -- (14,2.8) -- (16,0.2) -- (18, 1.3) -- (20,4);            
\foreach \point in {(-2,2), (0,2.3), (2,3.8), (4,3.2), (6,1.2), (8,-0.3), (10,2.8), (12,4.5), (14,2.8), (16,0.2), (18, 1.3), (20,4)}
   \draw[fill] \point circle (0.07);
  
\draw[-] (-2,4) -- (0,3.3) -- (2,0.0) -- (4,1.2) -- (6,3.5) -- (8,4.5) -- (10,3.4) -- (12,1.1) -- (14,0.8) -- (16,3.9) -- (18, 3.3) -- (20, 2);            
\foreach \point in {(-2,4), (0,3.3), (2,0.0), (4,1.2), (6,3.5), (8,4.5), (10,3.4), (12,1.1), (14,0.8), (16,3.9), (18, 3.3), (20, 2)}
   \draw[fill] \point circle (0.07);

\draw[-,color=red] (-2,0) -- (0,1.0) -- (2,5.2) -- (4,2.2) -- (6,0.1) -- (8,2.2) -- (10,5.2) -- (12,-1.0) -- (14,2.2) -- (16,4.9) -- (18, 1.8) -- (20,0);            
\foreach \point in {(-2,0), (0,1.0), (2,5.2), (4,2.2), (6,0.1), (8,2.2), (10,5.2), (12,-1.0), (14,2.2), (16,4.9), (18, 1.8), (20,0)}
   \draw[fill, color=red] \point circle (0.07);

 \draw[line width=1.0pt,decorate,decoration={brace,amplitude=8pt}] (11.9,-5.5) -- (-2.,-5.5) node [black,midway,yshift=-0.5cm] {\footnotesize $d$};
 \draw[line width=1.0pt,decorate,decoration={brace,amplitude=8pt}] (20.0,-5.5) -- (12.1,-5.5) node [black,midway,yshift=-0.5cm] {\footnotesize $\ell_\rho$};
 \draw[line width=1.0pt,decorate,decoration={brace,amplitude=8pt}] (-2,9.5) -- (20.,9.5) node [black,midway,yshift=0.5cm] {\footnotesize $\ba_B\rel \widetilde \bb_B^{*\#}$};
   
\foreach \y in {-2,6}
 \draw[-] (-2,\y) -- (12,\y);

\foreach \x in {-1,...,6}
   \draw[fill] (2*\x,-2) circle (0.07);

\foreach \x in {-1,...,6}
   \draw[fill] (2*\x,6) circle (0.07);

\foreach \y in {-2,6}
 \draw[-, dashed] (12,\y) -- (20,\y);

\foreach \y in {-4,8}
 \draw[-, dashed] (-2,\y) -- (20,\y);

\end{tikzpicture}
\caption{Two representatives for Case III. Numbers of discretization points are linked by $ d+ \ell_\rho = d_0+q+2\lambda +2$.
The dashed lines indicate the oscillating strands due to  the action of $\T$ and the augmentation of the skeleta by $^\#$. }
\label{fig:7C3}
\end{figure}
%\begin{exm}
%\label{exm:exist1}
%{\em
%A mapping  $F \in \Symp(\disc)$ always has a fixed point. Assume that $F$ also has a period 2 point such that mapping
%of the associated three point invariant set $B$ is given by $\beta = \sigma_2$. 
%In Example \ref{exm:compbi1} we compute the braid invariant $\HHH\llb\alpha\rel\beta\rrb = \sbb^1\not = 0$ for 
%$\gamma = \alpha\rel\beta =  \tau_2^{-1}   \tau_1 \tau_2 \tau_3  \in \BB_{1,3}$.
%By Theorem \ref{thm:main1} there exists another fixed point $z$ such that the mapping class relative to $C=\{z\} \cup B$
%is given by $\alpha\rel\beta$.
%}
%\end{exm}

\begin{exm}
\label{exm:exist2}
Consider $F\in \Symp(\disc)$ and assume that $F$ has a four point invariant sets $B$ and  mapping class $[F]$ relative to
$B$ is given by $\beta = \sigma_3\sigma_1\sigma_2^2\sigma_1\sigma_3$, cf.\ Figure \ref{fig:braidex1}.
Consider the 2-colored braid class $\llb\gamma,\aset\rrb \in \CC\BB_{1,4}^+$, with
\[
\gamma = \sigma_4\sigma_1\sigma_2\sigma_3\sigma_2^2\sigma_3\sigma_2\sigma_1\sigma_4,\quad \aset = \{3\}.
\]
Then, $(\gamma,\aset) \mapsto \beta$ and the positive conjugacy class is proper and acylindrical.
%
%$\alpha\rel\beta =  \tau_4 \tau_1\tau_2\tau_3^2\tau_2\tau_3^{-1}\tau_2^{-1} \tau_1 \tau_2\tau_3\tau_4
%\in \BB_{1,4}$, cf.\ Example \ref{exm:repr1}. The symmetric summit power is $\uplambda_0 = 0$ and a minimal base is given by
%$\bar\epsilon_0 = \gamma_0 = \sigma_4\sigma_1 (\sigma_2\sigma_3)^3\sigma_1\sigma_4\in \BB_5$.
In \cite[Sect.\ 4.5]{BraidConleyIndex} the braid invariant is given: $\HHH\llb\gamma,\aset\rrb = \sbb^1$, which by Theorem \ref{thm:main1}
yields another fixed point.
The index calculations in \cite{BraidConleyIndex} suggest that we can find more periodic point for $F$. Instead of using multi-strand braids with $n>1$, we
use $F^k$ instead.
Consider three different 2-colored braid words: $\gamma_0 = \gamma$ as above, $\gamma_{-1} = 
 \sigma_4\sigma_1\sigma_2\sigma_3^2\sigma_2\sigma_1\sigma_4$, and
$\gamma_1  = \sigma_4\sigma_1\sigma_3\sigma_2^2\sigma_3\sigma_1\sigma_4$.
For all three cases the skeletal word is given by $\beta$, the coloring is given by $\aset=\{3\}$.
Consider a symbolic sequence $\{a_i\}_{i=1}^k$, $a_i\in \{-1,0,1\}$, then
the positive conjugacy class $(\gamma,\aset)$, with $\aset = \{3\}$, and
\[
\gamma = \gamma_{a_0} \cdot\gamma_{a_1} \cdots \gamma_{a_{k-1}}\cdot \gamma_{a_k},
\]
is proper and acylindrical, except for $a_i=-1$, or $a_i=1$ for all $i$.
If follows that $(\gamma,\aset) \mapsto \beta^k$. In \cite[Sect.\ 4.5]{BraidConleyIndex} the braid invariant is given by $\HHH\llb\gamma,\aset\rrb = \sbb^r$,
where $r$ is the number of zeroes in $\{a_i\}$.
This procedure produces many $k$-periodic point for $F$ that are forced by the invariant set $B$. As matter of fact one obtains a lower bound on the 
topological entropy of $F$.
\end{exm}

\begin{exm}
\label{exm:exist3}
%Our theory can be applied to braids on three strands discussed in,
%
%Let us discuss a particular example.
%
Let $F\in \Symp(\disc)$ possess an invariant set $B$ consisting of three points,
and let the mapping class of $[F]$ relative to
$B$ be represented by a positive braid word $\beta = \ii_m^{-1}[\bbeta]$,
with $\bbeta = \bigl\{\bbeta^1(t), \bbeta^2(t), \bbeta^3(t)\bigr\}$ being a geometric representative of 
\[
\beta = \sigma_1\sigma_2^2\sigma_1^2\sigma_2^2\sigma_1\sigma_2\sigma_1\sigma_2^2\sigma_1,
\]
cf.\ Figure \ref{ex:nontriv1}.
For the intersection numbers it holds that  $\iota\bigl(\bbeta^1(t),\bbeta^2(t)\bigr) = \iota\bigl(\bbeta^1(t),\bbeta^3(t)\bigr) = 6$ and
  $\iota\bigl(\bbeta^2(t),\bbeta^3(t)\bigr) = 1 < 6$.
%%
%\begin{align*}
%  \iota\bigl(\bbeta^1(t),\bbeta^3(t)\bigr) &= \iota\bigl(\bbeta^2(t),\bbeta^3(t)\bigr) = 2p\\
%  \iota\bigl(\bbeta^1(t),\bbeta^2(t)\bigr) &= r < 2p,
%\end{align*}
%%
%where $r > 0$ and $p \geq 2$.
From considerations in~\cite[Sect.\ 9.2]{BraidConleyIndex} it follows that through the black strands $\bbeta$ one can plot a single red strand $\balpha$, 
such that the 2-colored braid class 
$\llb\gamma,\aset\rrb$ represented by their union 
\[
\gamma = \sigma_2 \sigma_1 \sigma_2 \sigma_3 \sigma_2 \sigma_3 \sigma_2 \sigma_3 \sigma_1
\sigma_2 \sigma_2 \sigma_1 \sigma_2 \sigma_3 \sigma_2 \sigma_3 \sigma_2 \sigma_1 \sigma_2 \sigma_1 \sigma_2 \sigma_3 \sigma_2 \sigma_3 \sigma_2 \sigma_1 \sigma_2,
\]
with $\aset = \{2\}$, cf.\ Figure \ref{ex:nontriv1}, 
is proper, acylindrical and of nontrivial index.
The intersection numbers for $\balpha$ are
 $\iota\bigl(\balpha(t),\bbeta^2(t)\bigr) = \iota\bigl(\balpha(t),\bbeta^3(t)\bigr) = 4$.
If the intersection numbers are chosen more generally, i.e.\
$\iota\bigl(\bbeta^1(t),\bbeta^2(t)\bigr) = \iota\bigl(\bbeta^3(t),\bbeta^3(t)\bigr) = 2p$ and
  $\iota\bigl(\bbeta^2(t),\bbeta^3(t)\bigr) = r < 2p$, and  $\iota\bigl(\balpha(t),\bbeta^2(t)\bigr) = \iota\bigl(\balpha(t),\bbeta^3(t)\bigr) = 2q$,
%%
%\begin{align*}
%  \iota\bigl(\bbeta^1(t),\bbeta^3(t)\bigr) &= \iota\bigl(\bbeta^2(t),\bbeta^3(t)\bigr) = 2p\\
%  \iota\bigl(\bbeta^1(t),\bbeta^2(t)\bigr) &= r < 2p,
%\end{align*}
%%
where $r > 0$ and $p \geq 2$. If $r < 2q < 2p$, then 
the singular homology of $\llb\gamma,\aset\rrb$ is given by
$$
H_{k}\bigl(\HHH\llb\gamma,\aset\rrb\bigr) = \begin{cases}
  \mathbb{R}: k= 2q,\ 2q+1,\\
 0: \text{elsewise}.\\
\end{cases}
$$
By Theorem~\ref{thm:main1} we conclude that there are at least two additional distinct fixed points $A_1, A_2$
with $\jmath_{A_i\cup B}([F]) = \gamma\!\!\mod \square, \ i = 1,2$. 
In addition,
via concatenating braid diagrams one can produce an infinite number of periodic solutions of different periods, cf.\ \cite[Lemma 47]{BraidConleyIndex}.
The above 
forcing result
is specific for area-preserving mapping of the 2-disc, or $\rr^2$ and \emph{not} true for arbitrary diffeomorphisms of the 2-disc.

For example consider the time-1 mapping $F\colon \disc\to\disc$ given by the differential equation $\dot{r} = r(r-a_1)(r-a_2)(r-1)$ and $\dot{\theta} = g(r)>0$,
with $g(a_1) = \pi$ and $g(a_2)  = 6\pi$, and $0<a_1<a_2<1$.
The set $B = \{-a_1,a_1,a_2\}$ is invariant and the mapping class of $F$ relative to $B$ is given by
the skeleton $\bbeta$. The mapping $F$ has no invariant set matching the braid $\gamma$.
This implies that the invariants in \cite{JiangZheng} are void for this example and the braid invariant introduced in this
paper add additional information to existing invariants,   in particular in the area-preserving case.
% could not be achieved by the methods of Euler-Floer characteristic (\textbf{please insert citation for your preprint/paper?
%with Munao, alternatively delete this sentence}).
\end{exm}

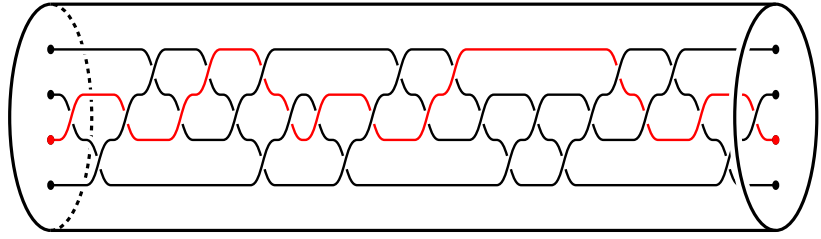
\begin{figure}[hbt]
  \centering
\scalebox{0.6}{                % SCALEBOX RESCALES THE WHOLE FIGURE TO 0.6 OF ITS ORIGINAL SIZE
\begin{tikzpicture}[rotate=90, line width=2.0pt, yscale=0.6] % I NEEDED TO FURTHER SCACE Y TO FIT TO PAGE WIDTH
\draw[dashed] (0,0) arc (180:360:2.5 and 1.5);
\draw (5,0) arc (0:180:2.5 and 1.5);

  \braid[color=white,
 line width=6.5pt,
] s_2 s_1 s_2  s_3 s_2 s_3  s_2 s_3-s_1 s_2  s_2 s_1 s_2  s_3 s_2 s_3  s_2 s_1  s_2 s_1 s_2  s_3 s_2 s_3  s_2 s_1 s_2;

  \braid[
 line width=1.5pt,
 style strands={2}{red}
] s_2 s_1 s_2  s_3 s_2 s_3  s_2 s_3-s_1 s_2  s_2 s_1 s_2  s_3 s_2 s_3  s_2 s_1  s_2 s_1 s_2  s_3 s_2 s_3  s_2 s_1 s_2;

\draw[color=white, line width=8.0pt] (5,-26.5) arc (0:180:2.5 and 1.5);
 
\draw (2.5,-26.5) ellipse (2.5cm and 1.5cm);

\draw[-] (5,0) -- (5,-26.5); 
\draw[-] (0,0) -- (0,-26.5); 

\foreach \x in {1,...,4}
 \draw[fill] (\x,0) circle (0.07);
\foreach \x in {1,...,4}
 \draw[fill] (\x,-26.5) circle (0.07);

\draw[color=red,fill] (2,0) circle (0.07);
\draw[color=red,fill] (2,-26.5) circle (0.07);
\end{tikzpicture}
}
\caption{A geometric representative of the braid for Example~\ref{exm:exist3} with $p=3$, $q=2$, and $r=1$.}
\label{ex:nontriv1}
\end{figure}

%\begin{exm}
%Consider the 2-colored word $(\gamma,\aset)$, with $\gamma = \sigma_2^{-1}\sigma_1\sigma_2^{-1}$ and $\aset = \{2\}$.
%The word $(\gamma' ,\aset')$, $\gamma'  =\sigma_1\sigma_2^{-2}$ and $\aset = \{3\}$, is conjugate to $(\gamma,\aset)$,
%and $\gamma'\square = \sigma_1^2\sigma_2^2\sigma_1$.
%The latter can be used to compute $\HHH\llb\gamma',\aset'\rrb$ and determine whether $\jmath_{A\cup B}([F]) = \gamma \!\! \mod\square$?
%\end{exm}

\begin{rem}
One can also consider 2-colored words $(\gamma,\aset)$, $\gamma\in \BB_{n+m}$. Braid words can be expressed in normal form.
The fundamental element, or Garside element in $\BB_m^+$ is denoted by 
$
\triangle := (\sigma_{1}\cdots \sigma_{m-1}) (\sigma_{1}\cdots \sigma_{m-2})\cdots  (\sigma_1 \sigma_2)\sigma_1,
$
and is also referred to as a \emph{half twist}.
The element $\triangle$ is a factor of a positive word $\beta$, if $\beta = \beta' \triangle \beta''$, $\beta',\beta''$ positive (possibly trivial) words.
If not, $\beta$ is said to be \emph{prime} to $\triangle$.
If we use the lexicographical order on the presentations of a positive word $\beta$, then the smallest positive word $\beta'$, positively equal to $\beta$, is called the \emph{base}
of $\beta$. For positive words $\beta$, prime to $\triangle$, the base is denoted by $\bar\beta$.
Following \cite{Birman} and \cite{Hazewinkel4},
every word $\beta\in \BB_m$ is uniquely presented by a word $\triangle^\varrho  \bar\delta$, with $\varrho\in \zz$ and $\bar\delta\in \BB_m^+$,
which is called \emph{left Garside normal form}.
Via the relation $\triangle \beta = r(\beta)\triangle$ we obtain the \emph{right Garside normal form} $\beta = r(\bar\delta)\triangle^\varrho$.

For the 2-colored braid words the Garside normal form is not the appropriate normal form, since  
% $\bar\delta$ is not an element
%of the monoid $\BB_{n,m}^+$ when the Garside power is odd, since
 odd powers of $\triangle$ do not represent trivial permutations.
% Regard $\gamma \in \BB_{n,m}$ as element in $\BB_{n+m}$.
In the case that the Garside power of $\gamma$ is even, then $\triangle^2 = \square$ defines the identity permutation and therefore the associated base $(\bar\delta,\aset)$, with $\bar\delta \in \BB_{n+m}^+$, is a positive 2-colored braid word. In this case the left and right normal form are the same since $\square$ is at the center of the braid group
$\BB_{n+m}$.
When the Garside power  is odd, we argue as follows. Let $\varrho=2\uplambda +1$, then
\begin{equation}
\label{eqn:garside2}
\gamma=\triangle^{2\uplambda +1} \bar \delta = \square^\uplambda \triangle \bar \delta
= \square^\uplambda r(\bar \delta) \triangle =   r(\bar \delta) \triangle \square^\uplambda =   \bar \epsilon \square^\uplambda
= \square^\uplambda\bar \epsilon,
\end{equation}
where $\bar\epsilon = \triangle \bar\delta = r(\bar\delta)\triangle$ is the base and $\bar\epsilon$ is prime to $\square$.
The power $\uplambda$ is related to the Garside power via $\uplambda = \uplambda(\gamma) = \floor{\varrho/2}$ and is referred to as the \emph{symmetric Garside power}
of $\gamma$.
For 2-colored braid words  $(\gamma,\aset)$ this yields the following symmetric normal form: $\gamma = \square^\uplambda  \bar\epsilon = \bar\epsilon \square^\uplambda$, with $\uplambda \in \zz$ and $(\bar\epsilon,\aset)$ a positive 2-colored braid word.
Since mapping classes of $F$ are given modulo full twists  the latter normal form suggests that we capture all forcing via positive braid words. 
%Indeed,
%if $(\gamma,\aset) \to \beta$, then $(\gamma\square^{-\uplambda},\aset) \to \beta\square^{-\uplambda}$, with $\beta\square^{-\uplambda}\in
%\BB_{n+m}^+$.
If conjugacy is incorporated, the power $\uplambda$ can be optimized with respect to different conjugate braid words.
\end{rem}

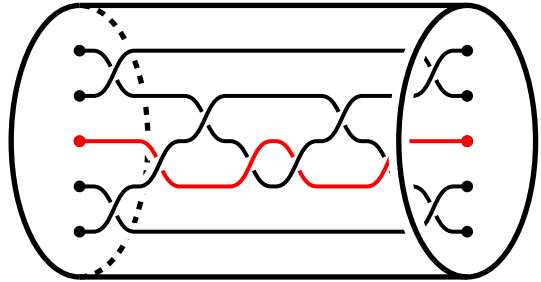
\begin{figure}[hbt]
\centering
\begin{tikzpicture}[rotate=90, line width=2.0pt, scale=0.6]
\draw[dashed] (0,0) arc (180:360:3.0 and 1.5);
\draw (6,0) arc (0:180:3.0 and 1.5);

  \braid[color=white,
 line width=6.5pt
] s_4-s_1 s_2 s_3 s_2 s_2 s_3 s_2 s_1-s_4;

  \braid[
 line width=1.5pt,
 style strands={3}{red}
] s_4-s_1 s_2 s_3 s_2 s_2 s_3 s_2 s_1-s_4;

\draw[color=white, line width=8.0pt] (6,-8.5) arc (0:180:3.0 and 1.5);
 
\draw (3,-8.5) ellipse (3cm and 1.5cm);

\draw[-] (6,0) -- (6,-8.5); 
\draw[-] (0,0) -- (0,-8.5); 

\foreach \x in {1,...,5}
 \draw[fill] (\x,0) circle (0.07);
\foreach \x in {1,...,5}
 \draw[fill] (\x,-8.5) circle (0.07);

\draw[color=red,fill] (3,0) circle (0.07);
\draw[color=red,fill] (3,-8.5) circle (0.07);
\end{tikzpicture}
\caption{The canonical representation of the  2-colored braid class $\llb\gamma,\aset\rrb \in \CC\BB_{1,4}^+$
with $\gamma = \sigma_4\sigma_1\sigma_2\sigma_3\sigma_2^2\sigma_3\sigma_2\sigma_1\sigma_4 $ and $\aset = \{3\}$.}
\label{fig:braidex1}
\end{figure}

\appendix

\section{Mapping classes and braids}
\label{sec:MCGclbr}

We give overview various known and less known facts about mapping class groups of the 2-disc and the 2-disc with  a finite number of marked points.

\subsection{Mapping class groups  of the 2-disc}
By the   `Alexander trick', cf.\ \cite[pp.\ 48]{Margalit}, $\Homeo^+_0(\disc)$ is contractible and thus $\Mod_0(\disc)$ is trivial.
The case of $\Homeo^+(\disc)$ is more involved. Consider the Serre fibration $\Homeo^+(\disc) \to \Homeo^+(\partial \disc)$ with fiber $\Homeo^+_0(\disc)$, cf.\ \cite[pp.\ 536]{Sher} and the associated homotopy long exact sequence
\[
\begin{aligned}
\cdots \longrightarrow &~\pi_1\bigl(\Homeo^+_0(\disc)\bigr)  \longrightarrow \pi_1\bigl(\Homeo^+(\disc)\bigr) \longrightarrow\\ 
 \longrightarrow&~\pi_1\bigl(\Homeo^+(\partial\disc)\bigr) \longrightarrow \pi_0\bigl(\Homeo^+_0(\disc)\bigr)  \longrightarrow \\
 \longrightarrow&~\pi_0\bigl(\Homeo^+(\disc)\bigr) \longrightarrow \pi_0\bigl(\Homeo^+(\partial\disc)\bigr).    
\end{aligned}
\]
Since $\pi_k\bigl(\Homeo^+_0(\disc)\bigr)\cong 1$, for all $k\ge 0$, and $\pi_1\bigl(\Homeo^+(\partial \disc)\bigr) \cong \zz$ and
$\pi_0\bigl(\Homeo^+(\partial \disc)\bigr) \cong 1$, we have the short exact sequences
\[
1 \longrightarrow \pi_1\bigl(\Homeo^+(\disc)\bigr)  \longrightarrow \zz \longrightarrow 1,
\]
which yields that $\pi_1\bigl(\Homeo^+(\disc)\bigr)\cong \zz$, and
\[
1 \longrightarrow \pi_0\bigl(\Homeo^+(\disc)\bigr) \longrightarrow 1,
\]
which shows that $\Mod(\disc) \cong 1$ and completes the proof of Proposition \ref{prop:MCG11}.

\begin{rem}
\label{rmk:smooth1}
Mapping classes can also be defined for orientation preserving diffeomorphisms of the 2-disc, denoted by $\Diff^+(\disc)$  and $\Diff^+_0(\disc)$. Due to a result by Smale \cite{Smale}, $\Diff^+_0(\disc)$ is contractible 
and therefore the Serre fibration $\Diff^+(\disc) \to \Diff^+(\partial \disc)$ with fiber $\Diff^+_0(\disc)$,
yields that 
$\pi_0\bigl(\Diff^+( \disc)\bigr) \cong 1$ and $ \pi_0\bigl(\Diff^+_0(\disc)\bigr) \cong 1$,
and thus
the above
 mapping class groups can also be defined via diffeomorphisms, cf.\ \cite{Boldsen}, \cite{GG} \cite{LaBach}.
% $\Mod(\disc)$ and $\Mod(\disc,\partial\disc)$
%  in the smooth case is the same
%and also denoted by $\Mod(\disc,\partial\disc) \cong 0$. Similarly,
%$\pi_1\bigl(\Diff^+(\partial \disc)\bigr) \cong \zz$ and
%$\pi_0\bigl(\Diff^+(\partial \disc)\bigr) \cong 0$, which implies the same result for $\pi_0\bigl(\Diff^+(\disc)\bigr)$ and mapping class group in this case is again denoted by $\Mod(\disc) \cong 0$, cf.\ \cite{Boldsen}.
\end{rem}

For a treatment of mapping class groups via symplectic and Hamiltonian diffeomorphisms see, Appendix \ref{sec:sympMCG}.

\subsection{Braids and mapping classes}
\label{subsec:braidMCG}
%More refined information about mapping classes is  obtained by considering finite invariant sets $B$.
%This leads to the definition of the \emph{relative mapping class} of $F$. 
%Two homeomorphisms $F,G\in \Symp(\disc)$ are of the same relative   mapping class if there
%exists an isotopy $\phi_t$,  with $\phi_t\in\Homeo^+(\disc)$ and $\phi_t(B) = B$ for all $t\in [0,1]$, such that
%$\phi_0=F$ and $\phi_1=G$. The subgroup of such homeomorphisms is denoted by $\Homeo^+(\disc\rel B)$, and $\Homeo^+(\disc\rel B,\partial\disc)$
%in case $\partial\disc$ is point wise invariant.
In order to determine the mapping class group in Proposition \ref{prop:MCG12} we 
consider the fiber bundle
\[
\Homeo^+_0(\disc\rel B) \xrightarrow{i} \Homeo^+_0(\disc ) \xrightarrow{\epsilon} \C_m\disc,
\]
%where $\C_m\disc$ is the \emph{configuration space} of $m$ points in $\inter\disc$,
 cf.\ \cite[pp.\ 245]{Margalit}.
The associated homotopy long exact  sequence, with base point $B$, 
yields
\[
\begin{aligned}
%\cdots \longrightarrow &~\pi_1\bigl(\Homeo^+_0(\disc)\bigr)  \longrightarrow \pi_1\bigl(\Homeo^+(\disc)\bigr) \longrightarrow\\ 
\cdots \longrightarrow&~\pi_1\bigl(\Homeo^+_0(\disc)\bigr)\xrightarrow{\epsilon_*} \pi_1\bigl(\C_m\disc\bigr) \xrightarrow{d_*}\\
\xrightarrow{d_*}&~ \pi_0\bigl(\Homeo^+_0(\disc\rel B)\bigr)  \xrightarrow{i_*} \pi_0\bigl(\Homeo^+_0(\disc)\bigr) \longrightarrow 1.
\end{aligned}
\]
Since $\pi_k\bigl(\Homeo^+_0(\disc)\bigr)\cong 1$, for all $k\ge 0$, we obtain the short exact sequence
\[
1\longrightarrow \pi_1\bigl(\C_m\disc\bigr) \xrightarrow{d_*} \pi_0\bigl(\Homeo^+_0(\disc\rel B)\bigr) \longrightarrow 1,
\]
which proves that $\pi_1\bigl(\C_m\disc\bigr) \xrightarrow{d_*} \pi_0\bigl(\Homeo^+_0(\disc\rel B)\bigr)$ is an isomorphism and 
$\Mod_0(\disc\rel B) \cong \BB_m$ via $\iota_B\circ d_*^{-1}$.
%, where $\Mod(\disc\rel B,\partial\disc) = 
%\pi_0\bigl(\Homeo^+(\disc\rel B,\partial\disc)\bigr)$ and $\BB_m := \pi_1\bigl(\C_m\disc\bigr)$ is Artin's  braid group.
Since $\Diff^+_0(\disc)$ is contractible, the same arguments yield
$\pi_0\bigl(\Diff^+_0(\disc\rel B)\bigr)\cong \BB_m$, which implies that
diffeomorphisms define the same mapping class group.
For a detailed study of relations between braids and mapping class groups
the reader is also referred to a comprehensive treatise by Birman \cite{Birman,Birman2}. See also \cite{GG}, \cite{G3}.

For homeomorphisms of the 2-disc  preserving $\partial\disc$ setwise we consider the fiber bundle
\[
\Homeo^+(\disc\rel B) \xrightarrow{i} \Homeo^+(\disc) \xrightarrow{\epsilon} \C_m\disc,
\]
which implies the following homotopy long exact sequence 
\[
\begin{aligned}
\cdots \longrightarrow &~ \pi_1\bigl(\Homeo^+(\disc)\bigr) 
 \xrightarrow{\epsilon_*} \pi_1\bigl(\C_m\disc\bigr)\xrightarrow{d_*}\\
  \xrightarrow{d_*}&~\pi_0\bigl(\Homeo^+(\disc\rel B)\bigr)  \xrightarrow{i_*}
 \pi_0\bigl(\Homeo^+(\disc)\bigr) \longrightarrow 1,
\end{aligned}
\]
with base point $B$.
From exactness we deduce that $\BBB_m\disc/\ker d_* \cong {\rm im~} d_* = \ker i_*$. For the kernel we have $\ker d_* = Z(\BBB_m\disc)$, cf.\ \cite[Theorem 4.3]{Birman}.
%Since $\Homeo^+(\disc\setminus B) \cong \Homeo^+(\disc\rel B)$ we have that 
%%. When $m = \# B\ge 2$, 
%\[
%\pi_1\bigl( \Homeo^+(\disc\rel B)\bigr)
%=\pi_1\bigl( \Homeo^+(\disc\setminus B)\bigr) \cong 0,
%\]
% cf.\ \cite[pp.\ 43]{Margalit}.
Since,
 $\pi_0\bigl(\Homeo^+(\disc)\bigr)\cong 1$ we have that $\ker i_* = \Mod(\disc\rel B)$ and
 %and $\pi_1\bigl(\Homeo^+(\disc)\bigr)\cong \zz$, imply the short exact sequence
 \[
 \longrightarrow \pi_1\bigl(\Homeo^+(\disc)\bigr) 
 \xrightarrow{\epsilon_*} \BBB_m\disc\xrightarrow{d_*} \Mod(\disc\rel B) \xrightarrow{i_*} 1,
 \]
%\[
%0\longrightarrow\zz \longrightarrow \pi_1\bigl(\C_m\disc\bigr) \longrightarrow \pi_0\bigl(\Homeo^+(\disc\rel B)\bigr) \longrightarrow 0,
%\]
%where $\zz\to \pi_1\bigl(\C_m\disc\bigr)$ gives the center of the braid group, see \cite[Theorem 4.3]{Birman}.
which implies that
$\Mod(\disc\rel B) \cong \BB_{m}/Z(\BB_{m})$.

The same can be derived using diffeomorophism instead of homeomorphisms, i.e. $\pi_0\bigl(\Diff^+(\disc\rel B) \bigr) = \Mod(\disc\rel B)
\cong \BB_m/Z(\BB_m)$, cf.\ \cite{GG}.

\section{Symplectic mapping classes}
\label{sec:sympMCG}
 
%\section{Symplectomorphisms}
%\label{sec:symplecto}
Two symplectomorphisms $F,G\in \Symp(\disc)$ are symplectically isotopic if there exists an isotopy $\phi_t$, with $\phi_t\in\Symp(\disc)$ for all $t\in [0,1]$,\footnote{The condition $\phi_t\in\Symp(\disc)$ is equivalent to $\phi_t^*\omega = \omega$.}
such that $\phi_0=F$ and $\phi_1 = G$.
The equivalence classes in $\Symp(\disc)/\!\!\sim$ are called \emph{symplectic mapping classes}. %which  forms  the symplectic mapping class group $\Mods(\disc)$. 

\begin{prop}
\label{prop:mapclass1}
$\pi_0\bigl(\Symp(\disc)\bigr) = \Mod(\disc)\cong 1$.
%It holds that
%$
%\Mod_{\rm symp}(\rD,\omega_0) \cong \Mod^+(\rD), 
%$
%where $\Mod^+(\rD)$  is  mapping class group of $\rD$ of orientation-preserving
%diffeomorphisms.  %Moreover, $\Mod_{\rm symp}(\Sb^2, \omega_{\Sb^2}) \cong 0$.
\end{prop}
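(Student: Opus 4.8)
The plan is to prove that $\Symp(\disc)$ is path-connected; the identification with $\Mod(\disc)$ then follows immediately, since Proposition \ref{prop:MCG11} already gives $\Mod(\disc)\cong 1$ and the inclusion $\Symp(\disc)\hookrightarrow \Diff^+(\disc)$ induces the natural comparison map on $\pi_0$. Thus the entire content is to show that every $F\in \Symp(\disc)$ is symplectically isotopic to the identity.

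First I would discard the symplectic structure and exploit connectedness of the full diffeomorphism group. By Smale's theorem (cf.\ Remark \ref{rmk:smooth1}), $\Diff^+(\disc)$ is connected, so there is a smooth path $F_t\in \Diff^+(\disc)$, $t\in[0,1]$, with $F_0=\id$ and $F_1=F$. Set $\omega_t:=F_t^*\omega$. Since each $F_t$ is a self-diffeomorphism of $\disc$ we have $\int_\disc \omega_t=\int_\disc \omega$ for all $t$, and because $F_0=\id$ and $F_1=F\in\Symp(\disc)$ the endpoints satisfy $\omega_0=\omega_1=\omega$.

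The heart of the argument is a parametrized Moser deformation producing a smooth family $g_t\in\Diff^+(\disc)$ with $g_t^*\omega_t=\omega$, $g_0=g_1=\id$, and each $g_t$ tangent to $\partial\disc$. For fixed $t$, interpolate $\omega_t^s:=(1-s)\omega+s\,\omega_t$, $s\in[0,1]$, a path of area forms of constant total area; choose a primitive $\beta_t$ of $\omega_t-\omega$ (independent of $s$, since $\tfrac{d}{ds}\omega_t^s=\omega_t-\omega$) and solve $\iota_{X_t^s}\omega_t^s=-\beta_t$ for the Moser vector field, whose time-$1$ flow $g_t$ satisfies $g_t^*\omega_t=\omega$. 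When $\omega_t=\omega$, i.e.\ $t\in\{0,1\}$, the interpolation is constant and $g_t=\id$. Finally set $\tilde F_t:=F_t\circ g_t$; then $\tilde F_t^*\omega=g_t^*F_t^*\omega=g_t^*\omega_t=\omega$, so $\tilde F_t\in\Symp(\disc)$ is a symplectic isotopy from $\id$ to $F$, which gives path-connectedness and hence $\pi_0\bigl(\Symp(\disc)\bigr)\cong 1=\Mod(\disc)$.

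The step I expect to be the main obstacle is controlling the boundary: to ensure $g_t$ is a genuine diffeomorphism of $\disc$ I must arrange that $X_t^s$ is tangent to $\partial\disc$, which in dimension two amounts to the pullback of $\beta_t$ to $\partial\disc$ vanishing (for $X$ tangent iff $\omega_t^s(X,\cdot)$ pulls back to zero on the boundary, since $\omega_t^s$ is a nondegenerate area form). This is achievable because $\int_{\partial\disc}\beta_t=\int_\disc d\beta_t=\int_\disc(\omega_t-\omega)=0$, so $\beta_t|_{\partial\disc}=dh_t$ is exact on the circle and may be removed by replacing $\beta_t$ with $\beta_t-d\tilde h_t$ for a smooth extension $\tilde h_t$ of $h_t$ to $\disc$, without altering $d\beta_t$. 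Choosing $\beta_t$ via a fixed linear homotopy operator applied to $\omega_t-\omega$ makes the whole construction depend smoothly on $t$ and forces $\beta_0=\beta_1=0$, so that $g_0=g_1=\id$ as required.
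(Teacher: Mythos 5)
Your proof is correct, and it is a genuine variant of the paper's argument rather than a reproduction of it. Both proofs start identically: triviality of $\Mod(\disc)$ (Smale) gives a smooth path $F_t\in\Diff^+(\disc)$ from $\id$ to $F$, and both then apply a Moser-stability argument to the family $\omega_t=F_t^*\omega$ of area forms with constant total area. The implementations diverge. You run, for each frozen $t$, a \emph{two-parameter} Moser deformation along the linear interpolation $\omega_t^s=(1-s)\omega+s\,\omega_t$, taking the primitive $\beta_t$ of $\omega_t-\omega$ from a fixed homotopy operator and correcting it by $d\tilde h_t$ so that it pulls back to zero on $\partial\disc$ (legitimate, since $\int_{\partial\disc}\beta_t=\int_\disc(\omega_t-\omega)=0$); your tangency criterion for $X^s_t$ is also right, because in dimension two the kernel of $\omega_t^s(\cdot,v)$ for $0\neq v$ tangent to the boundary is exactly $\rr v$. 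The paper instead runs a \emph{one-parameter} Moser flow in $t$ itself, with $X_t=a_t^{-1}\nabla\Phi_t$ and $\Phi_t$ solving the Neumann problem $-\Delta\Phi_t=\dot a_t$, $\partial_{\bf n}\Phi_t|_{\partial\disc}=0$; there the Neumann condition yields tangency and Stokes yields solvability. Each route buys something. Yours avoids solving an elliptic boundary-value problem, and — more substantively — the two-parameter scheme forces $g_0=g_1=\id$, so the symplectic path $F_t\circ g_t$ terminates exactly at $F$; in the paper's one-parameter scheme the corrected isotopy ends at $G\circ\chi_1$, where the Moser time-one map $\chi_1\in\Symp(\disc)$ has no a priori reason to be the identity, a point the paper passes over and which your construction handles cleanly. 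Conversely, the paper's PDE formulation is reused almost verbatim for the relative statement (Lemma \ref{prop:mapclass2}), where adding the term $a_t^{-1}J\nabla\lambda_t$ to $X_t$ pins the marked set $B$ pointwise; arranging the analogous constraint in your homotopy-operator setup would require extra work (modifying $\beta_t$ so that it also vanishes at the points of $B$), so your argument as stated covers the absolute case only — which is all Proposition \ref{prop:mapclass1} asks for.
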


\begin{proof}
%Let $F,G \in\Symp(\disc,\omega)$ be isotopic, i.e. there exists a symplectic isotopy $\psi_t$ such
%that $\psi_0 = g$, $\psi_1 =   f$ and $\psi_t^*\omega = \omega$ for all $t\in [0,1]$.
%This implies that $g^{-1}f \in \Symp_0(\disc,\omega)$.
%% the identity component of $\Symp(M)$.
%In particular $\psi_t$ is an isotopy in ${\rm Diff}(\disc)$ as well as in ${\rm Homeo}(\disc)$.
%% and
%%the inclusion $i: \Symp(\disc,\omega) \to {\rm Diff}(\disc)$ yields an injective homomorphism
%%$$
%%i^*: \Mod_{\rm symp}(\disc,\omega) \to \Mod_{\rm diff}(\disc)\cong \Mod(\disc).
%%$$
%%We show now that $i^*$ is in fact surjective and $i^*$ is an isomorphism.
%
Since $\Mod(\disc)\cong 1$, two symplectomorphisms
$F,G \in \Symp(\disc)$ are isotopic in $\Diff^+(\disc)$.
Therefore,  $F^{-1}G$ is isotopic to the identity via an isotopy $\xi_t$.
%\footnote{Isotopies in $\Diff^+(\disc)$ can taken be smooth, e.g.\ \cite{Margalit}, Section 1.4.}
%\in {\rm Diff}_0(\disc)$, and let $h_t$ be a smooth isotopy between $\id$ and
%$F^{-1}G$.
The isotopy $\xi_t$ does not necessarily preserve $\omega$. Define $\omega_t := \xi_t^*\omega$, 
which is a based loop in $\Omega^2(\disc)$, since
$\omega_t =\omega$ at $t=0$ and $t=1$.
Observe that $\int_{\disc} \omega_t = \int_{\disc} \omega = \pi$ for all
$t\in [0,1]$.
Indeed, since $\xi_t$ is smooth 1-parameter family of diffeomorphisms it holds that
$\int_{\disc} \omega_t = \pi \deg(\xi_t) = \pi$, since $\deg(\xi_t) = 1$ for all $t\in [0,1]$.

Write   $\omega_t = a_t(x,y) dx\wedge dy$,
with $a_t(x,y)>0$ on $[0,1]\times \disc$ and $a_0=a_1 =1$.
In order to construct a symplectic isotopy we invoke Moser's stability argument, cf.\ \cite{Moser23}, \cite{MS1}, Sect. 3.2.
Consider  potential functions $\Phi_t\colon \disc \times [0,1] \to \rr$ and define the vector fields
$X_t={1\over a_t(x,y)}\nabla \Phi_t$,
with $\langle X_t(x),{\bf n} \rangle  =0$ for
$x \in \partial \disc$, ${\bf n}$ the outward pointing normal.
The boundary condition guarantees that  $\disc$ is invariant for the associated flow $\chi_t$.
 Furthermore, define 1-forms
$\theta_t  = -\iota_{X_t} \omega_t$.
In order to apply Moser stability we seek potentials $\Phi_t$ such that $\frac{d\omega_t}{dt}  = d\theta_t$, which is equivalent to
the Neumann problem
\begin{equation}
\label{eqn:Neumann}
-\Delta \Phi_t = \frac{da_t(x,y)}{dt}, \quad \partial_{\bf n} \Phi_t\bigr|_{\partial \disc} =0.
\end{equation}
Since the forms $\omega_t$ are cohomologous, Stokes' theorem implies that
\[
0 = \int_\disc \frac{d\omega_t}{dt} = \int_\disc -\Delta\Phi_t \omega = \oint_{\partial\disc} \theta_t =\oint_{\partial\disc}\partial_{\bf n} \Phi_t
\]
which shows that the Neumann problem is well-posed and has a unique solution (up to an additive constant), which depends smoothly on $t$.
%Define the 2-parameter family of cohomologous area forms $\omega_t^s = \omega +
%s(\omega_t - \omega)$ and consider the equation 
%  ${d\omega^s_t\over ds}  = d\sigma_t^s$.
% Define the vector field $X_t^s={1\over a_t^s(x)}\nabla \Phi_t^s$ via the
%relation $\sigma_t^s  = -\iota_{X_t^s} \omega_t^s$
%for some potential function $\Phi_t^s: \disc \to \rr$
% and consider the equation
% ${d \chi_t^s \over ds} = X_t^s \circ \chi_t^s$.
%This gives the desired isotopy provided $\langle X_t^s(x),{\bf n} \rangle  =0$ for
%$x \in \partial \disc$.  The potential satisfies 
% the differential equation  
%$
%a_t(x) -1   =  \Delta \Phi_t^s(x) 
%$
%on $[0,1] \times \disc$
%and $\Phi_t^s = \Phi_t$.
%As before  we assume the Neumann boundary conditions  ${\partial \Phi_t(x) \over \partial {\rm n}} =0$ on $\partial\disc$.
%This problem is well-posed
%since the forms $\omega_t$ are cohomologous, i.e. $\int_{\disc} \omega_t = {\rm area}(\disc)$,
% and
%the Neumann problem  has, up to an additive constant, a unique smooth   1-parameter family of solutions $\Phi_t$ .
%It is clear that $X_t^s = {1\over a_t^s(x)}\nabla \Phi_t = 0$ for $t=0$ and $t=1$ and therefore $\chi_t^1={\rm id}$
%for $t=0$ and $t=1$.

By construction $\chi_t^* \omega_t = \omega$ and the desired symplectic isotopy is given by
 $\xi_t \circ \chi_t$.
 The symplectic isotopy $\phi_t :=F\circ \xi_t\circ \chi_t$ is an isotopy between $F$ and $G$, which proves that $F$ and $G$ are symplectically isotopic, and thus $\pi_0\bigl(\Symp(\disc)\bigr) = \Mod(\disc)$.
 \end{proof}

As for homeomorphisms we can also consider 
 relative symplectic mapping classes.
Two symplectomorphisms $F,G\in \Symp(\disc)$ are of the same relative symplectic mapping class if there
exists an isotopy $\phi_t$,  with $\phi_t\in\Symp(\disc)$ and $\phi_t(B) = B$ for all $t\in [0,1]$, such that
$\phi_0=F$ and $\phi_1=G$. The subgroup of such symplectomorphisms is denoted by $\Symp(\disc\rel B)$.
%This defines a group denoted by 
%$\Mod(\disc)$, and is called the mapping class group. The mapping class group for the 2-disc is trivial.
%Sometimes it is assumed that both the diffeomorphisms and isotopies are required to keep $\partial \disc$ point wise fixed - 
%leading to the definition of $\Mod(\disc, \partial \disc)$, which is also trivial.
%More interesting is the case when a finite invariant set
%$B \subset \inter \disc$ is given.
%This yields the symplectic mapping class groups 
%$\Mods(\disc \rel B)$ and $\Mods(\disc \rel B, \partial \disc)$ in case the boundary is point wise invariant, cf.\ \cite{Margalit}.

%It follows from the work of Birman \cite{Birman} that the standard mapping class group
%$\Mod(\disc \rel B, \partial \disc)$, with $\#B=m$, is isomorphic to
%Artin's braid group $\mathcal{B}_{m}$, and
%$\Mod(\disc \rel B) $ is isomorphic to $\mathcal{B}_{m}/Z(\mathcal{B}_{m})$,% \simeq \Mod(\plane \rel B)$,
%where $Z(\mathcal{B}_{m})$ --- the center of the braid group --- is spanned by
%full twists among the strands, cf.\  \cite{Birman}, \cite{Margalit}.
%The \emph{symplectic} mapping class groups are isomorphic to the standard mapping class groups.
%In particular, 
%$\Mods(\disc\rel B) \cong \mathcal{B}_{m}/Z(\mathcal{B}_{m})$, cf.\ Appendix \ref{sec:MCG}.

 \begin{lem}
\label{prop:mapclass2}
Let  $F,G\in \Symp(\disc)$ be isotopic in $\Diff^+(\disc\rel B)$, then
they are isotopic in $\Symp(\disc\rel B)$.
%$
%\Mods(\disc\rel B) =\Mod(\disc \rel B).%\cB_{b+m}(\Sb^2)
%%,\quad \Mod^0_{\rm symp}(M\rel A_m,\omega) \cong %\cB_{b+m-1}/Z(\cB_{b+m-1}),
%$
\end{lem}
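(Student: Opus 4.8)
The plan is to mirror the Moser stability argument used in the proof of Proposition \ref{prop:mapclass1}, upgrading it so that the correcting isotopy fixes the marked set $B$. First I would reduce to an isotopy based at the identity: if $g_t\in\Diff^+(\disc\rel B)$ is a smooth path with $g_0=F$ and $g_1=G$, set $\xi_t:=F^{-1}\circ g_t$, so that $\xi_0=\id$, $\xi_1=F^{-1}G$, and $\xi_t(B)=F^{-1}(g_t(B))=F^{-1}(B)=B$ setwise, using $F(B)=B$. Since $B$ is finite, each path $t\mapsto\xi_t(p)$, $p\in B$, is continuous into the discrete set $B$ and starts at $p$, hence is constant; thus $\xi_t$ fixes $B$ \emph{pointwise}. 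As in Proposition \ref{prop:mapclass1}, the forms $\omega_t:=\xi_t^*\omega$ form a based loop in $\Omega^2(\disc)$ with $\omega_0=\omega_1=\omega$, and $\int_\disc\omega_t=\deg(\xi_t)\,\pi=\pi$; writing $\omega_t=a_t\,dx\wedge dy$ gives $a_t>0$ with $a_0=a_1=1$.

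Next I would run Moser's argument to produce an isotopy $\chi_t$ with $\chi_0=\id$ and $\chi_t^*\omega_t=\omega$, so that $\xi_t\circ\chi_t$ is symplectic, since $(\xi_t\circ\chi_t)^*\omega=\chi_t^*\xi_t^*\omega=\chi_t^*\omega_t=\omega$, but now with the extra requirement that $\chi_t$ fix every point of $B$. This is the only genuine departure from Proposition \ref{prop:mapclass1}: the gradient field $X_t^0=\tfrac1{a_t}\nabla\Phi_t$ used there is rigidly determined by the Neumann problem $-\Delta\Phi_t=\dot a_t$ and need not vanish at the points of $B$. The key observation is that the Moser equation constrains $X_t$ only through $d(\iota_{X_t}\omega_t)=-\dot\omega_t$, so any solution may be altered by an $\omega_t$-Hamiltonian field $Y_t$, i.e. $\iota_{Y_t}\omega_t=dh_t$, without changing the equation, because $d(dh_t)=0$. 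I would therefore take $X_t=X_t^0+Y_t$ and choose $h_t$ so that $Y_t(p)=-X_t^0(p)$ at each $p\in B$; by nondegeneracy of $\omega_t$ this is the single condition $dh_t(p)=-\iota_{X_t^0(p)}\omega_t$ on the differential of $h_t$ at the finitely many \emph{interior} points $p\in B$, which is realized by a sum of bump functions (one per point) supported in disjoint balls inside $\inter\disc$ and depending smoothly on $t$. Such $h_t$ vanishes near $\partial\disc$, so $Y_t$ is tangent to $\partial\disc$; as $X_t^0$ is tangent to $\partial\disc$ by the Neumann condition, so is $X_t$, and its flow $\chi_t$ preserves $\disc$, fixes $B$ pointwise, and satisfies $\chi_t^*\omega_t=\omega$.

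With $\chi_t$ in hand, the composite $\phi_t:=F\circ\xi_t\circ\chi_t$ is a smooth path of symplectomorphisms with $\phi_t(B)=F(\xi_t\chi_t(B))=F(B)=B$, hence $\phi_t\in\Symp(\disc\rel B)$, and $\phi_0=F$. The endpoint $\phi_1=G\circ\chi_1$ is then settled exactly as the endpoint in Proposition \ref{prop:mapclass1}: the holonomy $\chi_1$ is symplectic, since $\chi_1^*\omega=\chi_1^*\omega_1=\omega$, and because the space of area forms on $\disc$ of total area $\pi$ is convex, the loop $\omega_t$ contracts to the constant loop through loops based at $\omega$. Running the same (marked-point) Moser construction over this $2$-parameter family gives a symplectic isotopy from $\chi_1$ to $\id$ that fixes $B$ throughout; composing on the left with $G$ connects $G\chi_1$ to $G$ inside $\Symp(\disc\rel B)$, and concatenation yields the desired isotopy from $F$ to $G$ in $\Symp(\disc\rel B)$.

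I expect the main obstacle to be precisely this marked-point modification of the Moser field: one must check that adjusting $X_t^0$ by a Hamiltonian correction killing it at $B$ is simultaneously compatible with boundary tangency (so $\disc$ stays invariant) and with the Moser equation (so $\chi_t^*\omega_t=\omega$ persists), and that the correction, together with the endpoint contraction, can be arranged to depend smoothly on all parameters. The remaining steps — the reduction, the fact that $\xi_t$ fixes $B$ pointwise, the area computation, and the holonomy bookkeeping — run in parallel to Proposition \ref{prop:mapclass1}.
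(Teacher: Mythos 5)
Your proposal is correct and follows essentially the same route as the paper: its proof likewise reduces to an isotopy $\xi_t$ of the identity with $\xi_t(B)=B$ and reruns the Moser argument of Proposition~\ref{prop:mapclass1}, adding to $X_t=\tfrac{1}{a_t}\nabla\Phi_t$ the term $\tfrac{1}{a_t}J\nabla\lambda_t$ --- precisely your Hamiltonian correction $Y_t$ with $\iota_{Y_t}\omega_t=d\lambda_t$ --- where $\lambda_t(x)=\langle J\nabla\Phi_t(z_0),x\rangle$ near each $z_0\in B$ and $\lambda_t$ is compactly supported in $\inter\disc$, so that $X_t$ vanishes on $B$, the Moser equation is unchanged, and the Neumann boundary condition is undisturbed. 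The one divergence is the endpoint: the paper simply asserts that $\phi_t=F\circ\xi_t\circ\chi_t$ joins $F$ to $G$, leaving the holonomy $\chi_1$ implicit, whereas you explicitly contract the based loop $\omega_t$ through the convex space of area forms and run a two-parameter Moser argument to connect $G\circ\chi_1$ to $G$ inside $\Symp(\disc\rel B)$ --- extra care that is warranted rather than redundant.
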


\begin{proof}
Let $F,G\in \Symp(\disc)$ be isotopic in $\Diff^+(\disc\rel B)$,
 then  $F^{-1}G$ is isotopic to the identity via a smooth isotopy $\xi_t$ with the additional condition
that $\xi_t(B) = B$ for all $t\in [0,1]$.
In order to find a symplectic isotopy we
repeat the proof of Proposition \ref{prop:mapclass1} with a few modifications.
%For given $f,g\in \Symp(\D^2\rel A_{b,m},\omega)$ which are isotopic in ${\rm Diff}(\D^2\rel A_{b,m})$
%we have an isotopy $h_t$ from ${\rm id}$ to $g^{-1}f$ in ${\rm Diff}_0(\D^2\rel A_{b,m})$.

Let  $X_t = {1\over a_t(x,y)}\nabla \Phi_t+  {1\over a_t(x,y)} J\nabla \lambda_t(x)$,
where $\lambda_t\colon [0,1]\times \disc \to \rr$  is a smooth function compactly supported in $\inter \disc$,
 where ${\bf t} = J{\bf n}$ is a unit tangent such that $\{{\bf n}, {\bf t}\}$ is positively oriented.
In order for the associated flow $\chi_t$ to restrict to a flow on $\disc$ we need that 
${\partial \lambda_t\over \partial {\bf t}} +
 {\partial \Phi_t\over \partial {\bf n}}=0$ at $\partial \disc$.
Since $\lambda_t$ is compactly supported in $\inter\disc$ we use the Neumann boundary condition for $\Phi_t$.
For the 
 1-forms  we obtain
\[
\theta_t = -\iota_{X_t}\omega_t =  -\partial_x \Phi_t dy + \partial_y \Phi_t dx - d\lambda_t.
\]
As in the proof of Proposition \ref{prop:mapclass1} the potential $\Phi_t$ is determined by the Neumann problem in \eqref{eqn:Neumann}.
Since $\lambda_t$ can be chosen arbitrarily we define
$\lambda_t(x) = \langle J\nabla \Phi_t(z_0),x\rangle$ on  neighborhoods of $z_0\in B$ and a smooth extension
outside, compactly supported in $\inter\disc$.
This guarantees that the   isotopies $\chi_t$ preserve $B$ point wise, which completes the proof.
\end{proof}

Let $\phi_t$ be a path in $\Symp(\disc)$, then $\phi_t$ satisfies the initial value
problem ${d\over dt} \phi_t = X_t \circ \phi_t$, where $X_t = {d\over dt} \phi_t \circ \phi_t^{-1}$ is a time-dependent vector field.
Since $\phi_t^*\omega = \omega$ it holds that $d\iota_{X_t}\omega=0$ for all $t\in [0,1]$, cf.\ \cite{MS1}, Proposition 3.2.
A symplectic isotopy is Hamiltonian is there exists a smooth function $H\colon[0,1]\times \disc \to \rr$
such that $\iota_{X_t}\omega = -dH(t,\cdot)$.
In this case $F=\phi_1$ is   a Hamiltonian symplectomorphism. 

By construction $\langle X_t,{\bf n}\rangle =0$ and
\[
-dH(t,\cdot)({\bf t}) = \iota_{X_t}\omega({\bf t}) = \omega(X_t,{\bf t}) = \omega(X_t,J{\bf n}) = \langle X_t,{\bf n}) =0,
\]
which proves that $H(t,\cdot)|_{\partial \disc} =  const.$
Since the 2-disc $\disc$ is contractible such a Hamiltonian exists, showing that symplectomorphisms of the 2-disc are Hamiltonian.

Let $\Symp(\disc\rel B)$ be the symplectomorphisms which leave a set $B \subset \inter \disc$ invariant.
The subgroup of Hamiltonian symplectomorphisms is denoted by $\Ham(\disc\rel B)$.
The above procedure yields the following result.

 \begin{prop}
\label{prop:mapclass3}
$\Symp(\disc\rel B) = \Ham(\disc\rel B)$.
\end{prop}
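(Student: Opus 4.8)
The plan is to derive the relative statement from the absolute identity $\Symp(\disc)=\Ham(\disc)$ established in the discussion preceding the proposition, together with the observation that the Hamiltonian isotopy realizing a given symplectomorphism need only stabilize $B$ at its endpoint. The inclusion $\Ham(\disc\rel B)\subseteq\Symp(\disc\rel B)$ is immediate from the definitions, since a Hamiltonian symplectomorphism is in particular symplectic and, by hypothesis, leaves $B$ invariant. All the content therefore lies in the reverse inclusion.

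For that inclusion I would begin with an arbitrary $F\in\Symp(\disc\rel B)$ and regard it merely as an element of $\Symp(\disc)$. By Proposition \ref{prop:mapclass1} the group $\Symp(\disc)$ is connected, so there is a symplectic isotopy $\phi_t$ with $\phi_0=\id$ and $\phi_1=F$. Setting $X_t=\tfrac{d}{dt}\phi_t\circ\phi_t^{-1}$, the relation $\phi_t^*\omega=\omega$ forces $d\iota_{X_t}\omega=0$, and contractibility of $\disc$ upgrades closedness to exactness, producing a smooth family with $\iota_{X_t}\omega=-dH(t,\cdot)$. The tangency $\langle X_t,\mathbf{n}\rangle=0$ along $\partial\disc$ then gives $dH(t,\cdot)(\mathbf{t})=0$ on the boundary, so $H(t,\cdot)|_{\partial\disc}$ is constant and $H\in\HH(\disc)$. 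Hence $F=\psi_{1,H}$ is Hamiltonian, and since $F(B)=B$ it lies in $\Ham(\disc\rel B)$, which closes the argument.

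The point I would emphasize, and the only place where care is genuinely required, is that the generating isotopy $\phi_t$ is \emph{not} required to preserve $B$ for $0<t<1$: only the time-one map $F$ need stabilize $B$. This is precisely what renders the identity $\Symp(\disc\rel B)=\Ham(\disc\rel B)$ compatible with the nontriviality of the mapping class group, and it is where a naive approach would stumble. Were one instead to demand $\phi_t(B)=B$ throughout, every such $F$ would be isotopic to the identity rel $B$ and $\pi_0$ would collapse, contradicting Proposition \ref{prop:MCG12a}; so the main obstacle is conceptual rather than technical, namely recognizing that the correct notion of $\Ham(\disc\rel B)$ imposes the relative constraint only on the endpoint. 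Apart from this caveat the argument is routine, the substantive analytic input—closed equals exact on the contractible disc, with the boundary normalization forcing $H\in\HH(\disc)$—having already been supplied in the paragraph preceding the statement.
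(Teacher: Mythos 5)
Your argument is correct under the paper's definitions, and it runs on the same analytic engine that the paper assembles in the paragraph preceding the proposition ($d\iota_{X_t}\omega=0$, exactness on the contractible disc, tangency $\langle X_t,{\bf n}\rangle=0$ along $\partial\disc$ forcing $H(t,\cdot)|_{\partial\disc}$ to be constant), but it is logically a different route from the paper's own proof. You connect $F$ to the identity through the full group $\Symp(\disc)$, legitimate by Proposition \ref{prop:mapclass1}, and impose $F(B)=B$ only at the endpoint; in effect you obtain $\Symp(\disc\rel B)=\Ham(\disc\rel B)$ by intersecting the absolute identity $\Symp(\disc)=\Ham(\disc)$ with the stabilizer of $B$. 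The paper instead works with an isotopy $\phi_t$ lying \emph{in} $\Symp(\disc\rel B)$: since $B$ is finite, $t\mapsto\phi_t(z_0)$ is a continuous path in a finite set and hence constant, so $X_t(z_0)=0$ for all $z_0\in B$ and the points of $B$ are critical points of $H(t,\cdot)$. That refinement --- symplectic isotopies rel $B$ are Hamiltonian isotopies whose flows fix $B$ pointwise --- is exactly what the relative statements downstream rely on (e.g.\ identifying $\pi_0\bigl(\Ham(\disc\rel B)\bigr)$ with $\Mod(\disc\rel B)$ in Proposition \ref{prop:MCG12a}), and your proof deliberately does not deliver it; conversely, your endpoint-only reading is the one under which the stated set equality covers classes that permute $B$, since the paper's rest-point argument taken literally with $\phi_0=\id$ only reaches maps fixing $B$ pointwise. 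Your ``collapse of $\pi_0$'' caveat is sound as a warning against requiring a $B$-preserving isotopy \emph{from the identity}, but do not over-read it: the paper's rest-point observation applies to arbitrary paths within $\Symp(\disc\rel B)$, which exist between maps in the same relative mapping class and need not emanate from the identity, so it is fully consistent with the nontriviality of $\pi_0$.
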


\begin{proof}
From the previous it follows that there exists a Hamiltonian such that $\iota_{X_t}\omega = -dH(t,\cdot)$.
Since the points in $B$ are rest points for $X_t$ it follows that $X_t(z_0) = 0$ for all $t$ and for all $z_0\in B$.
Consequently, the points in $B$ are critical points of $H$.
\end{proof}

%----------------------------------------------------------------------------------------
%	BIBLIOGRAPHY
%----------------------------------------------------------------------------------------

\renewcommand{\refname}{\spacedlowsmallcaps{References}} % For modifying the bibliography heading

\bibliographystyle{unsrt}

\bibliography{references.bib} % The file containing the bibliography

%----------------------------------------------------------------------------------------
\end{sloppypar}
\end{document}